\documentclass[11pt]{article}

\usepackage{amsmath}
\usepackage{amssymb}
\usepackage{amsthm}
\usepackage{amsfonts}
\usepackage{graphicx}
\usepackage{textcomp}
\usepackage{color}
\usepackage{hyperref}
\usepackage{multicol}
\usepackage{slashed}
\usepackage{setspace}
\usepackage[utf8]{inputenc}
\usepackage[english]{babel}
\usepackage[a4paper, total={5.7in, 9.5in}]{geometry}
\geometry{margin=0.9in}

\numberwithin{equation}{section}

\newtheorem{Proposition}{Proposition}[section]

\newtheorem{Corollary}{Corollary}[section]
\newtheorem{Definition}{Definition}[section]

\begin{document}

\begin{titlepage}

\title{`Anti-Commutable' Pre-Leibniz Algebroids and Admissible Connections}

\author{Tekin Dereli, Keremcan Do\u{g}an$^*$ \\ \small Department of Physics, Ko\c{c} University, 34450 Sar{\i}yer, \.{I}stanbul, Turkey}

\date{$^*$ Corresponding author, \mbox{E-mail: kedogan[at]ku.edu.tr}}

\maketitle

\begin{abstract}
\noindent The concept of algebroid is convenient as a basis for constructions of geometrical frameworks. For example, metric-affine and generalized geometries can be written on Lie and Courant algebroids, respectively. Furthermore, string theories might make use of many other algebroids such as metric algebroids, higher-Courant algebroids or conformal Courant algebroids. Working on the possibly most general algebroid structure, which generalizes many of the algebroids used in the literature, is fruitful as it creates a chance to study all of them at once. Local pre-Leibniz algebroids are such general ones in which metric-connection geometries are possible to construct. On the other hand, the existence of the `locality operator', which is present for the right-Leibniz rule for the bracket, necessitates the modification of torsion and curvature operators in order to achieve tensorial quantities. In this paper, this modification of torsion and curvature is explained from the point of view that the modification is applied to the bracket instead. This leads one to consider `anti-commutable' pre-Leibniz algebroids which satisfy an anti-commutativity-like property defined with respect to a choice of an equivalence class of connections. These `admissible' connections are claimed to be the necessary ones while working on a geometry of algebroids. This claim is due to the fact that one can prove many desirable properties and relations if one uses only admissible connections. For instance, for admissible connections, we prove the first and second Bianchi identities, Ricci identity, Cartan structure equations, Cartan magic formula, the construction of Levi-Civita connections, the decomposition of connection in terms of torsion and non-metricity. These all are possible because the modified bracket becomes anti-symmetric for an admissible connection so that one can apply the machinery of almost- or pre-Lie algebroids. We investigate various algebroid structures from the literature and show that they admit admissible connections which are metric-compatible in some generalized sense. Moreover, we prove that local pre-Leibniz algebroids that are not anti-commutable cannot be equipped with a torsion-free, and in particular Levi-Civita, connection.
\end{abstract}

\vskip 2cm

\textit{Keywords}: Pre-Leibniz Algebroids, Admissible Connections, Bianchi Identities, Cartan Formalism, Lie Algebroids, Generalized Geometry

\thispagestyle{empty}

\end{titlepage}

\maketitle

\section{Introduction}
\noindent Algebroid structures have been increasingly studied in order to construct geometrical frameworks which can be utilized to create a suitable background for classical field theories in physics. For instance, metric-affine geometries on a smooth manifold can be written on the tangent bundle of a manifold where this tangent bundle equipped with the Lie bracket becomes a Lie algebroid. The usual geometrical notions can be easily generalized on an arbitrary Lie algebroid \cite{1}. Metric-affine geometries constitute the mathematical foundation for gravity theories such as general relativity or Einstein-Cartan gravity. On the other hand, a search for a bracket similar to the Lie bracket on the tangent bundle direct sum cotangent bundle leads to generalized geometry \cite{2}. The new bracket yields a Courant algebroid structure on the generalized tangent bundle \cite{3}. By using ideas very similar to generalized geometry, one can give the double field theory formulation of effective string theories corparating $T$-duality in a similar fashion as in the usual gravity theories \cite{4}, \cite{5}. The double field theory formulation turns out to be closely related to para-Hermitian structures in which Vaisman's metric algebroids are used \cite{6}. Furthermore, $U$-duality of string theories can be also geometrized in the framework of exceptional generalized geometry \cite{7} in which the generalization of higher-Courant algebroids provides the mathematical basis \cite{8}. There are also many other algebroids used in the recent literature such as dull algebroids \cite{9}, or conformal Courant algebroids \cite{10}, or omni-Lie algebroids \cite{11}.

All the algebroids above provide special cases of local pre-Leibniz algebroids which are defined with the least possible assumptions. It is then natural to ask for a more all-inclusive framework on these pre-Leibniz algebroids. In order to achieve this comprehensive scheme, metric-connection geometries on local pre-Leibniz algebroids are constructed in our previous paper \cite{12} by expanding the ideas of \cite{13}. Since pre-Leibniz algebroids are defined with respect to an arbitrary bracket which needs not necessarily be anti-symmetric, both the left- and right-Leibniz rules are given separately, where in the former case an extra term with a `locality operator' is included. This situation creates a difficulty for defining the torsion and curvature operators, so one needs to modify the usual forms of these maps in order to have tensorial quantities. The same difficulty shows up in all of the geometries above as they have non-vanishing locality operators in general. Because of this modification, the geometrical meaning of some quantities are not clear cut \cite{13}. One main motivation in this paper is to understand better why this modification is necessary and what it means from a more familiar geometrical point of view. We will show that this modification can be understood as the modification of the bracket instead of the torsion and curvature operators. This leads us to the definition of `anti-commutable' pre-Leibniz algebroids, where the bracket is modified in such a way that it becomes anti-symmetric, so that one can use the machinery on almost-Lie algebroids. This definition of `anti-commutability' depends on a choice of an equivalence class of $E$-connections, which we call `admissible'. Many algebroids in the literature turns out to be a special case of anti-commutable pre-Leibniz algebroids with the appropriate choice of admissible connections. As proven below, this admissibility condition is usually just a metric-compatibility condition in some generalized sense. Our most important claim in this paper is to only consider admissible linear $E$-connections while working on a geometry in an algebroid setting because the anti-symmetry of the modified bracket sustains many important properties of the usual metric-affine geometry. 

Organization of the paper is as follows: In the second section of the paper, we will briefly summarize the necessary geometrical information about the usual metric-affine geometry on a smooth manifold. This section will set the notation of the paper while defining and outlining important properties of torsion, curvature and non-metricity tensors. In the second section, these constructions will be carried out on local pre-Leibniz algebroids. Several different algebroid structures will be introduced, together with the anti-commutable pre-Leibniz algebroids. Some examples of such anti-commutable pre-Leibniz algebroids from the literature and admissible connections on them will be discussed. The modification of the torsion and curvature operators will be investigated in terms of the modification of the bracket. As mentioned already, the anti-symmetry of these new brackets leads to many crucial relations, including the first and second Bianchi identities, the Ricci identity, Cartan structure equations, decomposition of connections in terms of torsion and non-metricity, construction of Levi-Civita connections. 

It must be noted that every construction here will be assumed to be in the smooth category, and the Einstein's summation convention for repeated indices is used. Moreover, one should be aware that the definitions of some algebroids (especially the ones with almost and pre prefixes) in the literature differ from paper to paper, so one should be careful about the assumptions of the certain work.

%``quite clear that there is no analogue of the uncontracted differential Bianchi identity for the Riemann tensor'' - Zwiebach

%%%%%%%%%%%%%%%%%%%%%%%%%%%%%%%%%%%%%%%%%%%%%%%%%%%%%%

\section{Metric-Affine Geometry on Smooth Manifolds}

\noindent Any (Hausdorff, paracompact, orientable) manifold $M$ comes with two naturally constructed fiber bundles; namely the tangent bundle $T(M)$ and its dual cotangent bundle $T^*(M)$. By using the tensor products of these bundles, one can get the set of $(q, r)$-type tensors, denoted by $Tens^{(q, r)}(M)$. The set of vector fields is denoted by $\mathfrak{X}(M)$, which is equipped with the Lie bracket $[\cdot,\cdot]: \mathfrak{X}(M) \times \mathfrak{X}(M) \to \mathfrak{X}(M)$, that makes $\mathfrak{X}(M)$ into a Lie algebra. The Lie bracket can be extended into whole tensor algebra by the Lie derivative $\mathcal{L}_V: Tens^{(q, r)}(M) \to Tens^{(q, r)}(M)$. On a local frame $(X_a)$, the anholonomy coefficients $\{ \gamma^c_{\ a b} \}$ are defined by $\gamma^c_{\ a b} := \langle e^c, [X_a, X_b] \rangle$, where $(e^a)$ is the dual local coframe. The map $\langle \cdot,\cdot \rangle : \Omega^p(M) \times \mathfrak{X}(M) \to \Omega^{p - 1}(M)$ is given by $\langle \omega, V \rangle  := \iota_V \omega$, for all $\omega \in \Omega^p(M), V \in \mathfrak{X}(M)$, where $\iota_V: \Omega^p(M) \to \Omega^{p - 1}(M)$ denotes the interior product with respect to a vector field $V$, and $\Omega^p(M)$ is the set of all (exterior differential) $p$-forms. The set of all forms is equipped with a degree-1 graded derivation, called the exterior derivative $d: \Omega^p(M) \to \Omega^{p + 1}(M)$:
\begin{align} 
d \omega (V_1, \ldots, V_{p+1}) := & \sum_{1 \leq i \leq p+1} (-1)^{i+1} V_i \left( \omega(V_1, \ldots, \check{V}_i, \ldots, V_{p+1}) \right) \nonumber\\
& + \sum_{1 \leq i < j \leq p+1} (-1)^{i+j} \omega \left( [V_i, V_j], V_1, \ldots, \check{V}_i, \ldots, \check{V}_j, \ldots, V_{p+1} \right),
\label{eb1}
\end{align}
where $\check{V}_i$ indicates that $V_i$ is excluded. The square of the exterior derivative vanishes, which follows from the Jacobi identity for the Lie bracket. The exterior derivative, interior product and the action of the Lie derivative on $p$-forms are related by the Cartan magic formula:
\begin{equation} \mathcal{L}_V = d \iota_V + \iota_V d.
\label{eb2}
\end{equation}

Metric-affine geometries are defined by a triplet $(M, g, \nabla)$ where $M$ is a manifold, $g$ is a metric, and $\nabla$ is an affine connection on $M$. The torsion and curvature operators of an affine connection $\nabla$ are defined as
\begin{align} 
& T(\nabla)(U, V) := \nabla_U V - \nabla_V U - [U, V], \nonumber\\
& R(\nabla)(U, V) W := \nabla_U \nabla_V W - \nabla_V \nabla_U W - \nabla_{[U, V]} W,
\label{eb3}
\end{align}
for all $U, V, W \in \mathfrak{X}(M)$. Moreover, for a given metric $g$ and an affine connection $\nabla$, their corresponding non-metricity tensor is defined by
\begin{equation} 
Q(\nabla, g) := \nabla g.
\label{eb4}
\end{equation}
On a local frame $(X_a)$, the torsion, curvature and non-metricity components read
\begin{align} 
&T(\nabla)^a_{\ b c} = \Gamma(\nabla)^a_{\ b c} - \Gamma(\nabla)^a_{\ c b} - \gamma^a_{\ b c}, \nonumber\\
&R(\nabla)^a_{\ b c d} = \ X_b \left( \Gamma(\nabla)^a_{\ c d} \right) - X_c \left( \Gamma(\nabla)^a_{\ b d} \right) + \Gamma(\nabla)^e_{\ c d} \Gamma(\nabla)^a_{\ b e} - \Gamma(\nabla)^e_{\ b d} \Gamma(\nabla)^a_{\ c e} - \gamma^e_{\ b c} \Gamma(\nabla)^a_{\ e d}, \nonumber\\
&Q(\nabla, g)_{a b c} = X_a(g_{b c}) - \Gamma(\nabla)^d_{\ a b} g_{d c} - \Gamma(\nabla)^d_{\ a c} g_{b d},
\label{eb5}
\end{align}
where the connection coefficients are defined as
\begin{equation} 
\Gamma(\nabla)^a_{\ b c} := \langle e^a, \nabla_{X_b} X_c \rangle.
\label{eb6}
\end{equation}
Torsion and curvature operators satisfy the first and second algebraic Bianchi identities:
\begin{align} 
&R(\nabla)(U, V)W + cycl. = (\nabla_U T(\nabla))(V, W) + T(\nabla)(T(\nabla)(U, V), W) + cycl., \nonumber\\
&(\nabla_U R(\nabla))(V, W) W' + cycl. = R(\nabla)(U, T(\nabla)(V, W)) W' + cycl.,
\label{eb7}
\end{align}
where $cycl.$ means the addition of cyclic permutations in $U, V, W$. Here, the action of the connection is extended to the whole tensor algebra by the Leibniz rule. They also satisfy the Ricci identity
\begin{equation} \nabla^2_{U, V} W - \nabla^2_{V, U} W = R(\nabla)(U, V)W - \nabla_{T(\nabla)(U, V)} W,
\label{eb8}
\end{equation}
where the second order covariant derivative is given as
\begin{equation} \nabla^2_{U, V} W := \nabla_U \nabla_V W - \nabla_{\nabla_U V} W.
\label{eb9}
\end{equation}
The Ricci identity (\ref{eb8}) is often considered as the definition of the curvature tensor for torsion-free connections.

The torsion and curvature operators satisfy the following anti-symmetry rules
\begin{align}
&T(\nabla)(U, V) = - T(\nabla)(V, U), \nonumber\\
&R(\nabla)(U, V)W = - R(\nabla)(V, U)W,
\label{eb11}
\end{align}
for all $U, V, W \in \mathfrak{X}(M)$, so that one can define the torsion and curvature 2-forms as
\begin{align}
&T^a(\nabla)(U, V) := \langle e^a, T(\nabla)(U, V) \rangle, \nonumber\\
&R^a_{\ b}(\nabla)(U, V) := \langle e^a, R(\nabla)(U, V) X_b \rangle.
\label{eb12}
\end{align}
The non-metricity and connection 1-forms are defined by
\begin{align}
&Q_{a b}(\nabla, g)(V) := Q(\nabla, g)(V, X_a, X_b), \nonumber\\
&\omega^a_{\ b}(\nabla)(V) := \langle e^a, \nabla_V X_b \rangle.
\label{eb13}
\end{align}
On the other hand the components of the forms and tensors are related by 
\begin{equation}
T^a(\nabla)_{b c} = T(\nabla)^a_{\ b c}, \qquad R^a_{\ b}(\nabla)_{c d} = R(\nabla)^a_{\ c d b}, \qquad Q_{a b}(\nabla, g)_c = Q(\nabla, g)_{c a b}.
\label{eb14}
\end{equation}
Similarly, the connection coefficients and connection 1-form components are related by
\begin{align}
\omega^a_{\ b}(\nabla)_c = \Gamma(\nabla)^a_{\ c b}.
\label{eb15}
\end{align}

By making use of these forms, the metric-affine geometries can be conveniently described in the language of forms. For example, the definitions of torsion and curvature operators become the first and second Cartan structure equations, respectively:
\begin{align} 
&T^a(\nabla) = d e^a + \omega^a_{\ b}(\nabla) \wedge e^b, \nonumber\\
&R^a_{\ b}(\nabla) = d \omega^a_{\ b}(\nabla) + \omega^a_{\ c}(\nabla) \wedge \omega^c_{\ b}(\nabla).
\label{eb16}
\end{align}
\noindent Similarly, the algebraic Bianchi identites (\ref{eb7}) lead to the differential Bianchi identities
\begin{align} 
&d T^a(\nabla) + \omega^a_{\ b}(\nabla) \wedge T^b(\nabla) = R^a_{\ b}(\nabla) \wedge e^b, \nonumber\\
&d R^a_{\ b}(\nabla) + \omega^a_{\ c}(\nabla) \wedge R^c_{\ b}(\nabla) = R^a_{\ c}(\nabla) \wedge \omega^c_{\ b}(\nabla),
\label{eb17}
\end{align}
which can be directly proven by evaluating the exterior derivative of the Cartan structure equations.

According to the fundamental theorem of semi-Riemannian geometry, for every metric $g$, there is a unique metric-$g$-compatible and torsion-free affine connection $^g \nabla$, which is called the Levi-Civita connection corresponding to $g$. It can be defined via the Koszul formula
\begin{align} 
2 g(^g \nabla_U V, W) = & \ U(g(V, W)) + V(g(U, W)) - W(g(U, V)) \nonumber\\
												& - g([V, W], U) - g([U, W], V) + g([U, V], W).
\label{eb18}
\end{align}
On a local frame, the Levi-Civita connection coefficients can be evaluated as
\begin{equation} \Gamma(^g \nabla)^a_{\ b c} = \frac{1}{2} g^{a d} \left[ X_b \left( g_{c d} \right) + X_c \left( g_{b d} \right) - X_d \left( g_{b c} \right) - \gamma^e_{\ c d} g_{e b} - \gamma^e_{\ b d} g_{e c}  + \gamma^e_{\ b c} g_{e d} \right].
\label{eb20}
\end{equation}
Moreover, the metric $g$, torsion $T(\nabla)$ and non-metricity $Q(\nabla, g)$ tensors are necessary and sufficient to determine uniquely the affine connection $\nabla$ given by:
\begin{align} 
2 g(\nabla_U V, W) = & \ 2 g(^g \nabla_U V, W) \nonumber \\
& - Q(\nabla, g)(U, V, W) - Q(\nabla, g)(V, U, W) + Q(\nabla, g)(W, U, V)  \nonumber\\
& - g(T(\nabla)(V, W), U) - g(T(\nabla)(U, W), V) + g(T(\nabla)(U, V), W).
\label{eb21}
\end{align}
On a local frame, this expression can be written as the following decomposition of the connection coefficients \cite{14}:
\begin{align} 
\Gamma(\nabla)^a_{\ b c} = & \Gamma(^g \nabla)^a_{\ b c} + \frac{1}{2} g^{a d} \Big[- Q(\nabla, g)_{b d c} + Q(\nabla, g)_{d c b} - Q(\nabla, g)_{c b d} \nonumber\\
													 & \qquad \qquad \qquad \quad \ \ - g_{e c} T(\nabla)^e_{\ b d} + g_{e d} T(\nabla)^e_{\ b c} - g_{e b} T(\nabla)^e_{\ c d} \Big].
\label{eb22} 
\end{align}

%%%%%%%%%%%%%%%%%%%%%%%%%%%%%%%%%%%%%%%%%%%%%%%%%%%%%%

\section{Metric-Connection Geometry on Anti-Commutable Pre-Leibniz Algebroids}

\noindent In this section, geometrical objects defined usually on the tangent bundle will be generalized to vector bundles that come equipped with some necessary structures. As one can take their dual and consider the tensor products; tensors, vector fields and $p$-forms are easily defined on an arbitrary (constant rank) real vector bundle $E$ over a manifold $M$. For example, $(q, r)$-type $E$-tensors are defined as the elements of

\begin{equation}
Tens^{(q, r)}(E) := \Gamma \left( \bigotimes_{i = 1}^q E \otimes \bigotimes_{j = 1}^r E^* \right),
\label{ec1}
\end{equation}
whereas the sets of $E$-vector fields and $E$-$p$-forms are denoted by $\mathfrak{X}(E)$ and $\Omega^p(E)$, respectively. $E$-interior product $\iota_V$ of an $E$-$p$-form with respect to an $E$-vector field can be defined analogously and it is a degree-$(-1)$ graded derivation on the set of all $E$-forms. With these definitions, a fiber-wise metric on $E$ becomes a non-degenerate, symmetric $(0, 2)$-type $E$-tensor, which is also called an $E$-metric. If $(E, \rho)$ is an anchored vector bundle, i. e. $\rho: E \to T(M)$ is a vector bundle morphism over the identity, then one can define an $E$-connection on a vector bundle $R$ over $M$ as an $\mathbb{R}$-bilinear map $\nabla: \mathfrak{X}(E) \times \mathfrak{X}(R) \to \mathfrak{X}(R), (v, r) \mapsto \nabla_v r$
\begin{align} 
&\nabla_v (f r) = \rho(v)(f) r + f \nabla_v r, \nonumber\\
&\nabla_{f v} r = f \nabla_v r,
\label{ec2}
\end{align}
for all $v \in \mathfrak{X}(E), r \in \mathfrak{X}(R), f \in C^{\infty}(M, \mathbb{R})$ \cite{15}. It can also be considered as a map $\mathfrak{X}(R) \to \Omega^1(E) \times \mathfrak{X}(R)$. Note that, the anchor $\rho: E \to T(M)$ induces a map between the sections as it is a vector bundle morphism over the identity, and this new map is also denoted by the same letter; $\rho: \mathfrak{X}(E) \to \mathfrak{X}(M)$. An $E$-connection on $E$ itself is called a linear $E$-connection. The action of a linear $E$-connection can be extended to the whole $E$-tensor algebra by the Leibniz rule, similar to the usual case. On a local $E$-frame $(X_a)$, $E$-connection coefficients are defined by
\begin{equation} \Gamma(\nabla)^a_{\ b c} := \langle e^a, \nabla_{X_b} X_c \rangle, 
\label{ec3}
\end{equation}
where the map $\langle \cdot,\cdot \rangle : \Omega^p(E) \times \mathfrak{X}(E) \to \Omega^{p - 1}(E)$ is defined by $\langle \Omega, v \rangle  := \iota_v(\Omega)$, for all $\Omega \in \Omega^p(E), v \in \mathfrak{X}(E)$. Given an $E$-metric $g$ and a linear $E$-connection $\nabla$, one can define the $E$-non-metricity tensor $Q(\nabla, g) := \nabla g$. If $Q(\nabla, g) = 0$, then $\nabla$ is called $E$-metric-$g$-compatible.

In order to continue with the definitions of $E$-torsion and $E$-curvature operators, one needs to introduce a bracket.

\begin{Definition} 
A triplet $(E, \rho, [\cdot,\cdot]_E)$ is called an almost-Leibniz algebroid if $(E, \rho)$ is an anchored vector bundle over $M$, $[\cdot,\cdot]_E: \mathfrak{X}(E) \times \mathfrak{X}(E) \to \mathfrak{X}(E)$ is an $\mathbb{R}$-bilinear map that satisfies the right-Leibniz rule
\begin{equation} 
[u, f v]_E = \rho(u)(f) v + f [u, v]_E,
\label{ec4}
\end{equation}
for all $u, v \in \mathfrak{X}(E), f \in C^{\infty}(M, \mathbb{R})$.
\label{dc1}
\end{Definition}
\noindent $E$-anholonomy coefficients can be defined as $\gamma^a_{\ b c} := \langle e^a, [X_b, X_c]_E \rangle$ on a local $E$-frame. 

\begin{Definition} 
A quadruplet $(E, \rho, [\cdot,\cdot]_E, L)$ is called a local almost-Leibniz algebroid over $M$ if $(E, \rho, [\cdot,\cdot]_E)$ is an almost-Leibniz algebroid, and $L: \Omega^1(E) \times \mathfrak{X}(E) \times \mathfrak{X}(E) \to \mathfrak{X}(E)$ is a $C^{\infty}(M, \mathbb{R})$-multilinear map, called the locality operator, that satisfies the left-Leibniz rule \cite{16}, \cite{13}
\begin{equation} [f u, v]_E = - \rho(v)(f) u + f [u, v]_E + L(Df, u, v),
\label{ec5}
\end{equation}
for all $u, v \in \mathfrak{X}(E), f \in C^{\infty}(M, \mathbb{R})$, where the coboundary map $D: C^{\infty}(M, \mathbb{R}) \to \Omega^1(E)$ is defined by $(Df)(u) := \rho(u)(f)$ \cite{17}.
\label{dc2}
\end{Definition}

\begin{Proposition} Let $(E, \rho, [\cdot,\cdot]_E)$ be an almost-Leibniz algebroid over $M$ and $L: \Omega^1(E) \times \mathfrak{X}(E) \times \mathfrak{X}(E) \to \mathfrak{X}(E)$ a $C^{\infty}(M, \mathbb{R})$-multilinear map. Then, the following condition is sufficient for $(E, \rho, [\cdot,\cdot]_E, L)$ to be a local almost-Leibniz algebroid:
\begin{equation} [u, v]_E + [v, u]_E = S(u, v),
\label{ec6}
\end{equation}
for all $u, v \in \mathfrak{X}(E)$, where $S: \mathfrak{X}(E) \times \mathfrak{X}(E) \to \mathfrak{X}(E)$ is a map that satisfies
\begin{equation} S(f u, v) = f S(u, v) + L(Df, u, v),
\label{ec7}
\end{equation}
for all $f \in C^{\infty}(M, \mathbb{R}), u, v \in \mathfrak{X}(E)$.
\label{pc1}
\end{Proposition}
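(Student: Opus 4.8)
The plan is to verify the left-Leibniz rule \eqref{ec5} directly, since that is the only extra axiom distinguishing a local almost-Leibniz algebroid from an almost-Leibniz algebroid equipped with a $C^\infty(M,\mathbb{R})$-multilinear map $L$ (the multilinearity of $L$ is part of the hypothesis, so nothing is needed there). The computation is a short chain of substitutions using \eqref{ec6}, \eqref{ec4}, and \eqref{ec7}, so I expect no genuine obstacle beyond keeping track of signs.

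First I would rewrite $[fu,v]_E$ by solving \eqref{ec6} for one of the two bracket orderings: $[fu,v]_E = S(fu,v) - [v,fu]_E$. Next I would expand $S(fu,v)$ with \eqref{ec7}, giving $S(fu,v) = f\,S(u,v) + L(Df,u,v)$, and expand $[v,fu]_E$ with the right-Leibniz rule \eqref{ec4} applied in the second slot, giving $[v,fu]_E = \rho(v)(f)\,u + f\,[v,u]_E$. Substituting both yields
\begin{equation}
[fu,v]_E = f\bigl(S(u,v) - [v,u]_E\bigr) + L(Df,u,v) - \rho(v)(f)\,u. \nonumber
\end{equation}

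Finally I would simplify the coefficient of $f$ using \eqref{ec6} once more, namely $S(u,v) - [v,u]_E = [u,v]_E$, to obtain $[fu,v]_E = f\,[u,v]_E - \rho(v)(f)\,u + L(Df,u,v)$, which is exactly \eqref{ec5}. This establishes that $(E,\rho,[\cdot,\cdot]_E,L)$ satisfies Definition \ref{dc2}, completing the proof.

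The only point requiring mild care is that \eqref{ec7} must be read as the defining symmetry-failure property of $S$ with respect to $C^\infty(M,\mathbb{R})$-rescaling in its first argument, matching the way $L$ enters \eqref{ec5}; once this is aligned, the identity is forced and the argument is purely algebraic, with no need for locality or smoothness beyond what is already assumed.
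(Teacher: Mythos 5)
Your proof is correct and follows exactly the same route as the paper's: rewrite $[fu,v]_E$ as $S(fu,v)-[v,fu]_E$ via (\ref{ec6}), expand with (\ref{ec7}) and the right-Leibniz rule (\ref{ec4}), then use (\ref{ec6}) once more to recover $f[u,v]_E$. No gaps.
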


\begin{proof} We need to show that the equations (\ref{ec6}, \ref{ec7}) and (\ref{ec4}) are enough to prove the equation (\ref{ec5}):
\begin{align} 
[f u, v]_E &= S(f u, v) - [v, f u]_E \nonumber\\
&= f S(u, v) + L(Df, u, v) - \left\{ \rho(v)(f) u + f [v, u]_E \right\} \nonumber\\
&= f S(u, v) + L(Df, u, v) - \rho(v)(f) u - f \left\{ S(u, v) - [u, v]_E \right\} \nonumber\\
&= - \rho(v)(f) u + f [u, v]_E + L(Df, u, v). \nonumber
\end{align}
\end{proof}

\begin{Corollary} Let $(E, \rho, [\cdot,\cdot]_E)$ be an almost-Leibniz algebroid whose bracket satisfies
\begin{equation} [u, v]_E + [v, u]_E = L(e^a, \nabla_{X_a} u, v) + L(e^a, \nabla_{X_a} v, u),
\label{ec8}
\end{equation}
for some linear $E$-connection $\nabla$ and some $C^{\infty}(M, \mathbb{R})$-multilinear map $L: \Omega^1(E) \times \mathfrak{X}(E) \times \mathfrak{X}(E) \to \mathfrak{X}(E)$, where $(X_a)$ is a local $E$-frame, and $(e^a)$ is its dual local $E$-coframe. Then, $(E, \rho, [\cdot,\cdot]_E, L)$ is a local almost-Leibniz algebroid.
\label{cc1}
\end{Corollary}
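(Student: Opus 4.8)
The plan is to reduce the statement to Proposition \ref{pc1} by exhibiting an explicit symmetric-part map $S$ of the required type. Define $S : \mathfrak{X}(E) \times \mathfrak{X}(E) \to \mathfrak{X}(E)$ by
\[
S(u, v) := L(e^a, \nabla_{X_a} u, v) + L(e^a, \nabla_{X_a} v, u),
\]
with the sum over $a$ understood. By hypothesis (\ref{ec8}) this $S$ satisfies $[u,v]_E + [v,u]_E = S(u,v)$, which is precisely (\ref{ec6}). It therefore remains only to verify the twisted $C^{\infty}(M,\mathbb{R})$-linearity (\ref{ec7}), namely $S(fu,v) = f\,S(u,v) + L(Df,u,v)$; the conclusion then follows immediately from Proposition \ref{pc1}.

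To check (\ref{ec7}), I would expand $S(fu,v)$ using the defining properties of a linear $E$-connection together with the $C^{\infty}(M,\mathbb{R})$-multilinearity of $L$. In the first term, the connection Leibniz rule (\ref{ec2}) gives $\nabla_{X_a}(fu) = \rho(X_a)(f)\,u + f\,\nabla_{X_a}u$, so multilinearity of $L$ in its second argument yields $L(e^a,\nabla_{X_a}(fu),v) = \rho(X_a)(f)\,L(e^a,u,v) + f\,L(e^a,\nabla_{X_a}u,v)$. In the second term, multilinearity of $L$ in its third argument gives $L(e^a,\nabla_{X_a}v,fu) = f\,L(e^a,\nabla_{X_a}v,u)$. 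Summing, the $f$-proportional pieces reassemble into $f\,S(u,v)$, leaving the extra contribution $\rho(X_a)(f)\,L(e^a,u,v)$.

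The only point requiring a small argument is that this extra term equals $L(Df,u,v)$. For this I would invoke the definition of the coboundary map: from $(Df)(X_b) = \rho(X_b)(f)$ one reads off the coframe expansion $Df = \rho(X_a)(f)\,e^a$, and then $C^{\infty}(M,\mathbb{R})$-linearity of $L$ in its first argument gives $\rho(X_a)(f)\,L(e^a,u,v) = L\big(\rho(X_a)(f)\,e^a, u, v\big) = L(Df,u,v)$. This completes the verification of (\ref{ec7}), and Proposition \ref{pc1} then delivers the claim.

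Since the whole argument is a bookkeeping computation resting on rules already established, I do not expect a genuine obstacle. The only place to be careful is tracking which slot of $L$ each factor enters and correctly identifying $Df$ with $\rho(X_a)(f)\,e^a$; this identification also shows, incidentally, that the right-hand side of (\ref{ec8}) — hence the map $S$ — does not depend on the choice of local $E$-frame.
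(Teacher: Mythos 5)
Your proposal is correct and follows essentially the same route as the paper: both define $S(u,v) := L(e^a, \nabla_{X_a} u, v) + L(e^a, \nabla_{X_a} v, u)$, verify condition (\ref{ec7}) using the connection Leibniz rule, the $C^{\infty}(M,\mathbb{R})$-multilinearity of $L$, and the identity $Df = \rho(X_a)(f)\,e^a$, and then invoke Proposition \ref{pc1}. No gaps.
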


\begin{proof} We need to show that the right-hand side of the equation (\ref{ec8}) satisfies the equation (\ref{ec7}). Let us choose $S(u, v) := L(e^a, \nabla_{X_a} u, v) + L(e^a, \nabla_{X_a} v, u)$
\begin{align} 
S(f u, v) &= L(e^a, \nabla_{X_a} (f u), v) + L(e^a, \nabla_{X_a} v, f u) \nonumber\\
&= L(e^a, \rho(X_a)(f) u + f \nabla_{X_a} u, v) + f L(e^a, \nabla_{X_a} v, u) \nonumber\\
&= L(\rho(X_a)(f) e^a, u, v) + f L(e^a, \nabla_{X_a} u, v) + f L(e^a, \nabla_{X_a} v, u) \nonumber\\
&= L(Df, u, v) + f \left\{ L(e^a, \nabla_{X_a} u, v) + L(e^a, \nabla_{X_a} v, u) \right\} \nonumber\\
&= f S(u, v) + L(Df, u, v), \nonumber
\end{align}
where we use the fact that $\rho(X_a)(f) e^a = Df$ and the definition of a linear $E$-connection.
\end{proof}

\noindent Note that the choice $S(u, v) = L(e^a, \nabla_{X_a} u, v)$ also satisfies the condition (\ref{ec7}), but $C^{\infty}(M, \mathbb{R})$-multilinearity of $L$ and the symmetry of $S$ dictate the trivial case $L = 0$. 

\begin{Definition} The quintet $(E, \rho, [\cdot,\cdot]_E, L, \nabla)$ is called an anti-commutable almost-Leibniz algebroid if $(E, \rho, [\cdot,\cdot]_E)$ is an almost-Leibniz algebroid, $L: \Omega^1(E) \times \mathfrak{X}(E) \times \mathfrak{X}(E) \to \mathfrak{X}(E)$ is a $C^{\infty}(M, \mathbb{R})$-multilinear map, $\nabla$ is a linear $E$-connection, and the condition (\ref{ec8}):
\begin{equation} [u, v]_E + [v, u]_E = L(e^a, \nabla_{X_a} u, v) + L(e^a, \nabla_{X_a} v, u),
\nonumber
\end{equation}
holds for all $u, v \in \mathfrak{X}(E)$.
\label{dc3}
\end{Definition}

If $(E, \rho, [\cdot,\cdot]_E, L, \nabla)$ is an anti-commutable almost-Leibniz algebroid, so is $(E, \rho, [\cdot,\cdot]_E, L, \nabla')$ for any $\nabla'$ that satisfies
\begin{equation} L(e^a, \Delta(\nabla, \nabla')(X_a, u), v)) = - L(e^a, \Delta(\nabla, \nabla')(X_a, v), u)),
\label{ec9}
\end{equation}
for all $u, v \in \mathfrak{X}(E)$, where $\Delta(\nabla, \nabla')$ is the difference $E$-tensor between the linear $E$-connections $\nabla$ and $\nabla'$, which is defined by
\begin{equation} \Delta(\nabla, \nabla')(u, v) := \nabla_u v - \nabla'_u v,
\label{ec10}
\end{equation}
for all $u, v \in \mathfrak{X}(E)$. The condition (\ref{ec9}) is an equivalence relation in the affine space of linear $E$-connections. The equivalence class of a linear $E$-connection $\nabla$ will be denoted by $[\nabla]_L$. For an almost-Leibniz algebroid $(E, \rho, [\cdot,\cdot]_E)$, if one is given a multilinear map $L$, then this equivalence class $[\nabla]_L$ can be evaluated. Similarly, given a linear $E$-connection $\nabla$, one can evaluate an equivalence class $[L]_{\nabla}$ of multilinear maps. Or, the most general case would be the one with the equivalence class $[(L, \nabla)]$, where neither $L$ and $\nabla$ are fixed. The most natural case seems to be the one in which the multilinear map $L$ is fixed, and the equivalence class $[\nabla]_L$ of linear $E$-connections is evaluated accordingly. This leads us to the following definition:
\begin{Definition} On an anti-commutable almost-Leibniz algebroid, linear $E$-connections in the equivalence class $[\nabla]_L$ are called admissible.
\label{dc4}
\end{Definition}

Main motivation behind these admissible linear $E$-connections comes from the torsion and curvature operators. The naive generalizations of the torsion and curvature operators (\ref{eb3}) do not work for local almost-Leibniz algebroids, as the $C^{\infty}(M, \mathbb{R})$-multilinearity does not hold so that they are not tensorial.
In \cite{12}, we proved that there is an affine map $^A \mathfrak{T}: \nabla \mapsto \ ^A [\cdot,\cdot]_{\nabla}$ from the affine space of linear $E$-connections to local almost-Leibniz brackets on $E$ defined by
\begin{equation} 
^A [u, v]_{\nabla} := \nabla_u v - \nabla_v u + A(u, v),
\label{ec11}
\end{equation}
for any $\mathbb{R}$-bilinear map $A: \mathfrak{X}(E) \times \mathfrak{X}(E) \to \mathfrak{X}(E)$ satisfying 
\begin{align} 
A(f u, v) &= L(Df, u, v) + f A(u, v), \nonumber\\
A(u, f v) &= f A(u, v),
\label{ec12}
\end{align}
for all $u, v \in \mathfrak{X}(E), f \in C^{\infty}(M, \mathbb{R})$. Trivially, this also implies that adding the $A$ term to the definition of the $E$-torsion operator would make it tensorial. For a local almost-Leibniz algebroid, the $E$-torsion operator suggested in \cite{13} is given by
\begin{equation} 
T(\nabla)(u, v) := \nabla_u v - \nabla_v u - [u, v]_E + L(e^a, \nabla_{X_a} u, v).
\label{ec13}
\end{equation}
Note that the extra term $L(e^a, \nabla_{X_a} u, v)$ satisfies the necessary conditions (\ref{ec12}), but the geometric meaning behind this modification was missing. We claim that the use of admissible linear $E$-connections might clarify the geometric interpretation: One should work with an admissible linear $E$-connection when trying to construct a metric-connection geometry on an algebroid structure. The choice of an $E$-connection changes the choice of the bracket so that these two structures are not completely independent from each other. This interpretation can be seen as follows. The modification of the $E$-torsion operator can be considered as a modification of the bracket $[\cdot,\cdot]_E$:
\begin{equation} [u, v]_E^{\nabla} := [u, v]_E - L(e^a, \nabla_{X_a} u, v).
\label{ec14}
\end{equation}
This new bracket will be called ``\textit{the modified bracket}'', and in terms of it, the $E$-torsion operator (\ref{ec13}) becomes
\begin{equation} T(\nabla)(u, v) = \nabla_u v - \nabla_v u - [u, v]_E^{\nabla}.
\label{ec15}
\end{equation}
One can analogously define ``\textit{modified anholonomy coefficients}'' as 
\begin{equation} \gamma(\nabla)^a_{\ b c} := \langle e^a, [X_b, X_c]_E^{\nabla} \rangle = \gamma^a_{\ b c} - \Gamma(\nabla)^e_{\ d b} L^{a d}_{\ \ e c},
\label{ec16}
\end{equation}
so that the $E$-torsion coefficients can be written as
\begin{align} 
T(\nabla)^a_{\ b c} &= \Gamma(\nabla)^a_{\ b c} - \Gamma(\nabla)^a_{\ c b} - \gamma^a_{\ b c} + \Gamma(\nabla)^e_{\ d b} L^{a d}_{\ \ e c}, \nonumber\\
&= \Gamma(\nabla)^a_{\ b c} - \Gamma(\nabla)^a_{\ c b} - \gamma(\nabla)^a_{\ b c}.
\label{ec17}
\end{align}
One can observe that every local almost-Leibniz bracket in the range of the map $^A \mathfrak{T}$ is an anti-commutable bracket when $A$ is chosen as $L(e^a, \nabla_{X_a} u, v)$:
\begin{align} 
^A [u, v]_{\nabla} + \ ^A [v, u]_{\nabla} &= \left[ \nabla_u v - \nabla_v u + A(u, v) \right] + \left[ \nabla_v u - \nabla_u v + A(v, u) \right] \nonumber\\
&= A(u, v) + A(v, u) \nonumber\\
&= L(e^a, \nabla_{X_a} u, v) + L(e^a, \nabla_{X_a} v, u)
\label{ec18}
\end{align}
More generally, the maps $S$ and $A$ are directly related by $S(u, v) = A(u, v) + A(v, u)$.

The modified bracket makes it clear the following simple, yet important, fact:

\begin{Proposition} For an almost-Leibniz algebroid, an $E$-torsion-free linear $E$-connection $\nabla$ has to be admissible.
\label{pc1b}
\end{Proposition}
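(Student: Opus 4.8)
The plan is to read the result straight off the modified bracket (\ref{ec14})--(\ref{ec15}). Suppose $\nabla$ is an $E$-torsion-free linear $E$-connection, i.e. $T(\nabla)(u,v)=0$ for all $u,v\in\mathfrak{X}(E)$, where $T(\nabla)$ is the $E$-torsion operator (\ref{ec13}) built with the locality operator $L$. By (\ref{ec15}) this is equivalent to $[u,v]_E^{\nabla}=\nabla_u v-\nabla_v u$, and the right-hand side is manifestly anti-symmetric under the exchange $u\leftrightarrow v$. Hence the modified bracket is anti-symmetric: $[u,v]_E^{\nabla}+[v,u]_E^{\nabla}=0$.

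Next I would unfold this anti-symmetry using the definition $[u,v]_E^{\nabla}=[u,v]_E-L(e^a,\nabla_{X_a}u,v)$ from (\ref{ec14}). Substituting and rearranging gives
\begin{equation}
[u,v]_E+[v,u]_E = L(e^a,\nabla_{X_a}u,v)+L(e^a,\nabla_{X_a}v,u),
\nonumber
\end{equation}
which is precisely the anti-commutability condition (\ref{ec8}). Thus the quintet $(E,\rho,[\cdot,\cdot]_E,L,\nabla)$ satisfies Definition \ref{dc3}, i.e. it is an anti-commutable almost-Leibniz algebroid. (If one wishes to start only from an almost-Leibniz algebroid equipped with the $L$ appearing in (\ref{ec13}), the displayed identity shows that the symmetric part of the bracket has the form required in (\ref{ec7}) with $S(u,v):=[u,v]_E+[v,u]_E$, so Corollary \ref{cc1} first upgrades $(E,\rho,[\cdot,\cdot]_E,L)$ to a local almost-Leibniz algebroid before Definition \ref{dc3} applies.)

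It then remains to invoke Definition \ref{dc4}: $\nabla$ is admissible exactly when $\nabla\in[\nabla]_L$ for the equivalence relation (\ref{ec9}). Since $\Delta(\nabla,\nabla)=0$ by (\ref{ec10}), condition (\ref{ec9}) holds trivially for the pair $(\nabla,\nabla)$ — equivalently, this is just reflexivity of the equivalence relation (\ref{ec9}) — so $\nabla$ lies in its own class and is therefore admissible. There is no genuinely hard step in this argument; the only point that deserves care is bookkeeping, namely ensuring that the locality operator $L$ with respect to which ``admissible'' is defined is the same $L$ entering the chosen $E$-torsion operator (\ref{ec13}). Conceptually the statement is just the observation that $E$-torsion-freeness forces the modified bracket $[\cdot,\cdot]_E^{\nabla}$ to be anti-symmetric, which is the defining property of anti-commutability relative to $\nabla$.
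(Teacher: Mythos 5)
Your proposal is correct and follows essentially the same route as the paper: write the $E$-torsion via the modified bracket (\ref{ec15}), observe that $E$-torsion-freeness forces $[u,v]_E^{\nabla}=\nabla_u v-\nabla_v u$ to be anti-symmetric, and unfold this to recover the anti-commutability condition (\ref{ec8}). The extra bookkeeping you add about Corollary \ref{cc1} and reflexivity of the equivalence relation (\ref{ec9}) is harmless elaboration of what the paper leaves implicit.
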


\begin{proof} In terms of the modified bracket the $E$-torsion operator of a linear $E$-connection $\nabla$ is written as (\ref{ec15}):
\begin{equation} T(\nabla)(u, v) = \nabla_u v - \nabla_v u - [u, v]_E^{\nabla}, \nonumber
\end{equation}
which implies for the $E$-torsion-free case:
\begin{equation} [u, v]_E^{\nabla} = \nabla_u v - \nabla_v u. \nonumber
\end{equation}
As the right-hand side is anti-symmetric in $u$ and $v$, the modified bracket has to be anti-symmetric. Hence, $\nabla$ is admissible.
\end{proof}

The modified bracket (\ref{ec14}) is related with different algebroid structures. Inspired from the dull algebroid definition in \cite{9}, we define almost-dull algebroids:

\begin{Definition} The triplet $(E, \rho, [\cdot,\cdot]_E)$ is called an almost-dull algebroid, if $(E, \rho)$ is an anchored vector bundle and the bracket satisfies
\begin{equation} [f_1 u, f_2 v]_E = f_1 f_2 [u, v]_E + f_1 \rho(u)(f_2) v - f_2 \rho(v)(f_1) u,
\label{ec19}
\end{equation}
for all $f_1, f_2 \in C^{\infty}(M, \mathbb{R}), u, v \in \mathfrak{X}(E)$.
\label{dc5}
\end{Definition}
\noindent Note that, this property (\ref{ec19}) is equivalent to the fact that an almost-dull algebroid is a local almost-Leibniz algebroid whose locality operator vanishes. 

\begin{Definition} An almost-Leibniz algebroid $(E, \rho, [\cdot,\cdot]_E)$ is called an almost-Lie algebroid if the bracket $[\cdot,\cdot]_E$ is anti-symmetric.
\label{dc6}
\end{Definition}
\noindent Every almost-Lie algebroid is an almost-dull algebroid due to the anti-symmetry of the bracket. Moreover, almost-dull algebroids that are not almost-Lie constitute examples of almost-Leibniz algebroids which are not anti-commutable.

\begin{Proposition} Given some $L$, if $(E, \rho, [\cdot,\cdot]_E)$ is an almost-Leibniz algebroid, then $(E, \rho, [\cdot,\cdot]_E^{\nabla})$ is an almost-dull algebroid. Moreover, if $\nabla$ is an admissible linear $E$-connection, then $(E, \rho, [\cdot,\cdot]_E^{\nabla})$ is an almost-Lie algebroid.
\label{pc2}
\end{Proposition}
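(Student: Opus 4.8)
The plan is to verify the two defining function-linearity identities directly from the definitions, treating the two claims in sequence. For the first claim I would start from the modified bracket $[u,v]_E^{\nabla} = [u,v]_E - L(e^a, \nabla_{X_a} u, v)$ and compute $[f_1 u, f_2 v]_E^{\nabla}$ by inserting the scalars one at a time. The inner slot $f_2 v$ is handled by the right-Leibniz rule (\ref{ec4}) for $[\cdot,\cdot]_E$ together with $C^\infty$-multilinearity of $L$ in its last argument; the outer slot $f_1 u$ is handled by the left-Leibniz rule (\ref{ec5}) — recalling that an almost-Leibniz algebroid equipped with this particular $L$ is a local almost-Leibniz algebroid, which is exactly Corollary \ref{cc1} applied with $S(u,v) = L(e^a,\nabla_{X_a}u,v)+L(e^a,\nabla_{X_a}v,u)$ — and by the defining relation $\nabla_{f_1 X_a}(\,\cdot\,)$-type manipulation, i.e. $L(e^a, \nabla_{X_a}(f_1 u), v) = L(\rho(X_a)(f_1)e^a, u, v) + f_1 L(e^a,\nabla_{X_a}u,v) = L(Df_1,u,v) + f_1 L(e^a,\nabla_{X_a}u,v)$, using $\rho(X_a)(f_1)e^a = Df_1$. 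The $L(Df_1, u, v)$ contribution from the left-Leibniz rule for $[\cdot,\cdot]_E$ must cancel precisely against the $-L(Df_1,u,v)$ coming from expanding $-L(e^a,\nabla_{X_a}(f_1 u), v)$ in the definition of the modified bracket; once that cancellation is exhibited, the remaining terms collect into $f_1 f_2 [u,v]_E^{\nabla} + f_1 \rho(u)(f_2) v - f_2 \rho(v)(f_1) u$, which is exactly (\ref{ec19}).

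For the second claim, once $\nabla$ is admissible the anti-commutator of the modified bracket vanishes: indeed $[u,v]_E^{\nabla} + [v,u]_E^{\nabla} = \big([u,v]_E + [v,u]_E\big) - L(e^a,\nabla_{X_a}u,v) - L(e^a,\nabla_{X_a}v,u)$, and by the anti-commutability condition (\ref{ec8}) — which holds for every member of the class $[\nabla]_L$, hence for admissible $\nabla$ — the right-hand side is zero. So $[\cdot,\cdot]_E^{\nabla}$ is anti-symmetric. Combined with the almost-dull property just established (and Definition \ref{dc6}, noting that anti-symmetry plus the almost-dull identity (\ref{ec19}) forces the right-Leibniz rule (\ref{ec4}), so $[\cdot,\cdot]_E^{\nabla}$ is genuinely an almost-Leibniz bracket that is anti-symmetric), this makes $(E,\rho,[\cdot,\cdot]_E^{\nabla})$ an almost-Lie algebroid.

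I do not expect a genuine obstacle here: the content is a bookkeeping computation, and every ingredient has already been isolated in the excerpt (Corollary \ref{cc1} gives the locality structure of $[\cdot,\cdot]_E^{\nabla}$ implicitly through the identity $S = A + $ its flip, the computation in the proof of Corollary \ref{cc1} already shows $S$ satisfies (\ref{ec7}), and equations (\ref{ec11})--(\ref{ec18}) essentially package the modified bracket as $^A[\cdot,\cdot]_{\nabla}$ with $A(u,v) = L(e^a,\nabla_{X_a}u,v)$). The one place to be careful is the sign/index bookkeeping in the cancellation of the two $L(Df_1,u,v)$ terms, and the observation that one could alternatively shortcut the whole argument: $[\cdot,\cdot]_E^{\nabla}$ lies in the range of $^A\mathfrak{T}$ with the specific $A$ above (by (\ref{ec15}) and (\ref{ec11})), and the excerpt already states that this map lands in local almost-Leibniz brackets with locality operator governed by $L$; since (\ref{ec18}) shows that same $A$ makes the bracket anti-commutable, the locality operator of $[\cdot,\cdot]_E^{\nabla}$ — and that of any admissible representative — vanishes, which is exactly the characterization of almost-dull given right after Definition \ref{dc5}. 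Either route is short; I would present the direct computation for the almost-dull part and the two-line anti-symmetry argument for the almost-Lie part.
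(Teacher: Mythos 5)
Your proposal is correct and follows essentially the same route as the paper: verifying the right- and left-Leibniz rules for $[\cdot,\cdot]_E^{\nabla}$ by direct computation (with the key cancellation of the two $L(Df,u,v)$ terms showing the locality operator of the modified bracket vanishes, hence almost-dull), and then the two-line anti-symmetry computation from the admissibility condition (\ref{ec8}) for the almost-Lie part. The only cosmetic difference is that you route the availability of the left-Leibniz rule through Corollary \ref{cc1}, whereas the paper simply invokes (\ref{ec5}) directly since $L$ is understood to be the locality operator of the given local structure; either reading is fine.
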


\begin{proof} We need to show that the modified bracket $[.,.]_E^{\nabla}$ satisfies the condition (\ref{ec19}). Equivalently, we can show that $[.,.]_E^{\nabla}$ is a local almost-Leibniz bracket with a vanishing locality operator. First, we prove the right-Leibniz rule:
\begin{align} 
[u, f v]_E^{\nabla} &= [u, f v]_E - L(e^a, \nabla_{X_a} u, f v) \nonumber\\
&= \rho(u)(f) v + f [u, v]_E - f L(e^a, \nabla_{X_a} u, v) \nonumber\\
&= \rho(u)(f) v + f \left\{ [u, v]_E - L(e^a, \nabla_{X_a} u, v) \right\} \nonumber\\
&= \rho(u)(f) v + f [u, v]_E^{\nabla}, \nonumber
\end{align}
so that $(E, \rho, [.,.]_E^{\nabla})$ is an almost-Leibniz algebroid. Next, we prove the left-Leibniz rule:
\begin{align} 
[f u, v]_E^{\nabla} &= [f u, v]_E - L(e^a, \nabla_{X_a} (f u), v) \nonumber\\
&= - \rho(v)(f) u + f [u, v]_E + L(Df, u, v) - L(e^a, \rho(X_a)(f) u + f \nabla_{X_a} u, v) \nonumber\\
&= - \rho(v)(f) u + f [u, v]_E + L(Df, u, v) - L(Df, u, v) - f L(e^a, \nabla_{X_a} u, v) \nonumber\\
&= - \rho(v)(f) u + f \left\{ [u, v]_E - L(e^a, \nabla_{X_a} u, v) \right\} \nonumber\\
&= - \rho(v)(f) u + f [u, v]_E^{\nabla} + 0. \nonumber
\end{align}
This means that the locality operator on the almost-Leibniz algebroid $(E, \rho, [.,.]_E^{\nabla})$ can be chosen as 0. Hence, we proved that $(E, \rho, [.,.]_E^{\nabla})$ is an almost-dull algebroid. Moreover, if $\nabla$ is admissible, then 
\begin{equation} 
[v, u]_E^{\nabla} = [v, u]_E - L(e^a, \nabla_{X_a} v, u) = - [u, v]_E + L(e^a, \nabla_{X_a} u, v) = - [u, v]_E^{\nabla} \nonumber
\end{equation}
where we used the definition (\ref{ec8}) of an admissible linear $E$-connection. This means that the modified bracket $[.,.]_E^{\nabla}$ is anti-symmetric, so that $(E, \rho, [.,.]_E^{\nabla})$ is an almost-Lie algebroid.
\end{proof}

\noindent The fact that $[\cdot,\cdot]_E^{\nabla}$ is anti-symmetric for an admissible linear $E$-connection $\nabla$ is the reason behind the adjective ``\textit{anti-commutable}''. In some sense, one can make the bracket anti-commutative (or anti-symmetric). One natural question is that which of these modified brackets $[.,.]_E^{\nabla}$ correspond to the anti-symmetrization of the original bracket $[.,.]_E$. This is the case when an admissible linear $E$-connection $\nabla'$ satisfies $L(e^a, \nabla'_{X_a} u, v) = L(e^a, \nabla'_{X_a} v, u)$. In this case, the anholonomy coefficients decomposes into its anti-symmetric and symmetric parts respectively as:
\begin{equation} \gamma^a_{\ b c} = \gamma(\nabla')^a_{\ b c} + \Gamma(\nabla')^d_{\ a b} L^{e a}_{\ \ d c}.
\label{ec19b}
\end{equation}

\begin{Corollary} For an admissible linear $E$-connection $\nabla$, the $E$-torsion operator $T(\nabla)$ is anti-symmetric:
\begin{equation} T(\nabla)(u, v) = - T(\nabla)(v, u),
\label{ec20}
\end{equation}
for all $u, v \in \mathfrak{X}(E)$.
\label{cc2}
\end{Corollary}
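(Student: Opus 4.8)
The plan is to read the claim straight off the reformulation (\ref{ec15}) of the $E$-torsion operator in terms of the modified bracket. First I would write
\begin{equation}
T(\nabla)(u, v) = \nabla_u v - \nabla_v u - [u, v]_E^{\nabla}, \qquad T(\nabla)(v, u) = \nabla_v u - \nabla_u v - [v, u]_E^{\nabla},
\nonumber
\end{equation}
and add the two. The connection terms cancel pairwise with no assumptions on $\nabla$, leaving $T(\nabla)(u, v) + T(\nabla)(v, u) = - \left( [u, v]_E^{\nabla} + [v, u]_E^{\nabla} \right)$.

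The key step is then to invoke Proposition \ref{pc2}: since $\nabla$ is admissible, $(E, \rho, [\cdot,\cdot]_E^{\nabla})$ is an almost-Lie algebroid, so the modified bracket is anti-symmetric and $[u, v]_E^{\nabla} + [v, u]_E^{\nabla} = 0$. Hence $T(\nabla)(u, v) + T(\nabla)(v, u) = 0$, which is exactly (\ref{ec20}). Alternatively one can bypass Proposition \ref{pc2} and expand $[u, v]_E^{\nabla} + [v, u]_E^{\nabla}$ directly from the definition (\ref{ec14}): the two $L$-terms combine to $- \left( L(e^a, \nabla_{X_a} u, v) + L(e^a, \nabla_{X_a} v, u) \right)$, which cancels against $[u, v]_E + [v, u]_E$ by the admissibility identity (\ref{ec8}). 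Both routes are one-liners, and I would present the one through Proposition \ref{pc2} for brevity, mentioning the direct computation as a remark.

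There is essentially no obstacle here: the entire content is packaged in the admissibility condition (\ref{ec8}) and in the identity (\ref{ec15}), both already available. The only points worth flagging are that the cancellation of $\nabla_u v - \nabla_v u$ is purely formal, and that anti-symmetry is an algebraic identity valid for arbitrary $u, v \in \mathfrak{X}(E)$ — it requires none of the $C^{\infty}(M, \mathbb{R})$-multilinearity of $T(\nabla)$, which is where the admissibility machinery was really needed earlier.
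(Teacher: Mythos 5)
Your proof is correct and follows essentially the same route as the paper: both reduce the claim to the anti-symmetry of the modified bracket $[\cdot,\cdot]_E^{\nabla}$ for an admissible connection (Proposition \ref{pc2}) applied to the rewriting (\ref{ec15}) of the $E$-torsion operator. The paper states this in one line; your version merely makes the cancellation of the $\nabla_u v - \nabla_v u$ terms explicit and notes the equivalent direct verification via (\ref{ec8}), both of which are fine.
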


\begin{proof} This follows from the fact that the modified bracket $[.,.]_E^{\nabla}$ is anti-symmetric for an admissible linear $E$-connection $\nabla$. The $E$-torsion operator $T(\nabla)$ can be seen as the $E$-torsion operator on the almost-Lie algebroid $(E, \rho, [.,.]_E^{\nabla})$ without any modification.
\end{proof}

In order to define an $E$-curvature operator, one needs to assume that the anchor respects the brackets.

\begin{Definition} An almost-Leibniz algebroid $(E, \rho,$ $[\cdot,\cdot]_E)$ is called a pre-Leibniz algebroid if 
\begin{equation}
\rho([u, v]_E) = [\rho(u), \rho(v)],
\label{ec21}
\end{equation}
for all $u, v \in \mathfrak{X}(E)$.
\label{dc7}
\end{Definition}
\noindent Moreover, to achieve tensorial curvature one needs to introduce a locality projector, which we defined in \cite{12}. As the locality projectors are related to the kernel of the anchor, one needs to assume that the anchored vector bundle is regular, i. e. the kernel of the anchor is of locally constant rank, so that the kernel defines a subbundle.
\begin{Definition} On a regular local almost-Leibniz algebroid $(E, \rho, [\cdot,\cdot]_E, L)$ , a locality projector is defined as a $C^{\infty}(M, \mathbb{R})$-linear map $\mathcal{P}: \mathfrak{X}(E) \to \mathfrak{X}(E)$ such that
\begin{enumerate}
\item The image of the projected locality operator $\hat{L} := \mathcal{P} L$ is a subset of the kernel of the anchor, i.e. $im(\hat{L}) \subset ker(\rho)$,
\item The restriction of the locality projector on the kernel of the anchor is the identity map, i.e. $\mathcal{P}|_{ker(\rho)} = id_{ker(\rho)}$.
\end{enumerate}
\label{dc8}
\end{Definition}
\noindent With the help of a locality projector, one can define the $E$-curvature operator.

\begin{Definition}
Given a locality projector $\mathcal{P}$ on $(E, \rho, [\cdot,\cdot]_E, L)$, the $E$-curvature operator of a linear $E$-connection $\nabla$ is defined by
\begin{equation}
R(\nabla)(u, v) w := \nabla_u \nabla_v w - \nabla_v \nabla_u w - \nabla_{[u, v]_E} w + \nabla_{\hat{L}(e^a, \nabla_{X_a} u, v)} w.
\label{ec22}
\end{equation}
\label{dc9}
\end{Definition}
\noindent Similar to the $E$-torsion operator, one can modify the bracket in order to express the $E$-curvature operator more naturally on an algebroid structure. In order to understand what type of bracket will be the result of this new modification, we need pre-dull algebroids.

\begin{Definition} An almost-dull algebroid $(E, \rho, [\cdot,\cdot]_E)$ is called a pre-dull algebroid if the equation (\ref{ec21}) is satisfied \cite{9}.
\label{dc10}
\end{Definition}
\noindent Note that such an algebroid is called a ``dull algebroid'' in the reference \cite{9}, but it is more appropriate to call it pre-dull for the purposes of this paper. Moreover, a general caution is in place; in the literature many algebroid structures are defined by using different names, so one should be careful about the assumptions in a specific article. 

\begin{Definition} A pre-Leibniz algebroid $(E, \rho, [\cdot,\cdot]_E)$ is called a pre-Lie algebroid if the bracket $[\cdot,\cdot]_E$ is anti-symmetric.
\label{dc11}
\end{Definition}
\noindent Every pre-Lie algebroid is automatically a pre-dull algebroid due to the anti-symmetry of the bracket. Note that the $E$-curvature operator (\ref{ec22}) can be written by using the modified bracket (\ref{ec14}) as follows
\begin{equation}
R(\nabla)(u, v) w := \nabla_u \nabla_v w - \nabla_v \nabla_u w - \nabla_{[u, v]_E^{\nabla}} w - \nabla_{(1 - \mathcal{P}) L(e^a, \nabla_{X_a} u, v)} w.
\label{ec23}
\end{equation}
\noindent Hence, one can see that it is not the same as the $E$-curvature operator on the pre-Lie algebroid $(E, \rho, [\cdot,\cdot]_E^{\nabla})$. Actually, $(E, \rho, [\cdot,\cdot]_E^{\nabla})$ is not even a pre-Lie algebroid in the most general case. It is a pre-Lie algebroid only if the image of the locality projector is already a subset of the kernel of the anchor; in this case one does not need the locality projector. However, one can have another modification of the bracket $[\cdot,\cdot]_E$ by using the locality projector, which will be called ``\textit{the projected modified bracket}'':

\begin{equation} [u, v]_E^{\hat{\nabla}} := [u, v]_E - \hat{L}(e^a, \nabla_{X_a} u, v) = [u, v]_E - \mathcal{P} L(e^a, \nabla_{X_a} u, v).
\label{ec24}
\end{equation}
\noindent By using this projected modified bracket, the $E$-curvature operator (\ref{ec22}) can be written as

\begin{equation} R(\nabla)(u, v) w := \nabla_u \nabla_v w - \nabla_v \nabla_u w - \nabla_{[u, v]_E^{\hat{\nabla}}} w.
\label{ec25}
\end{equation}
Similarly, one can define ``\textit{projected modified anholonomy coefficients}'' as 
\begin{equation} \hat{\gamma}(\nabla)^a_{\ b c} := \langle e^a, [X_b, X_c]_E^{\hat{\nabla}} \rangle = \gamma^a_{\ b c} - \Gamma(\nabla)^e_{\ d b} \hat{L}^{a d}_{\ \ e c} = \gamma^a_{\ b c} - \Gamma(\nabla)^e_{\ d b} L^{a f}_{\ \ e c} \mathcal{P}^d_{\ f},
\label{ec26}
\end{equation}
so that $E$-curvature coefficients can be written as
\begin{align} 
R(\nabla)^a_{\ b c d} = & \ \rho(X_b) \left( \Gamma(\nabla)^a_{\ c d} \right) - \rho(X_c) \left( \Gamma(\nabla)^a_{\ b d} \right) + \Gamma(\nabla)^e_{\ c d} \Gamma(\nabla)^a_{\ b e} \nonumber\\
& - \Gamma(\nabla)^e_{\ b d} \Gamma(\nabla)^a_{\ c e} - \gamma^e_{\ b c} \Gamma(\nabla)^a_{\ e d} + \Gamma(\nabla)^a_{\ g d} \Gamma(\nabla)^f_{\ e b} \hat{L}^{g e}_{\ \ f c}, \nonumber\\
= & \ \rho(X_b) \left( \Gamma(\nabla)^a_{\ c d} \right) - \rho(X_c) \left( \Gamma(\nabla)^a_{\ b d} \right) + \Gamma(\nabla)^e_{\ c d} \Gamma(\nabla)^a_{\ b e} \nonumber\\
& - \Gamma(\nabla)^e_{\ b d} \Gamma(\nabla)^a_{\ c e} - \hat{\gamma}(\nabla)^e_{\ b c} \Gamma(\nabla)^a_{\ e d}
\label{ec27}
\end{align}

\begin{Proposition} Given some $L$, if $(E, \rho, [\cdot,\cdot]_E)$ is a pre-Leibniz algebroid, then $(E, \rho, [\cdot,\cdot]_E^{\hat{\nabla}})$ is a pre-dull algebroid. Moreover, if $\nabla$ is an admissible linear $E$-connection, then $(E, \rho, [\cdot,\cdot]_E^{\hat{\nabla}})$ is a pre-Lie algebroid.
\label{pc3}
\end{Proposition}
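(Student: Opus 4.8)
The plan is to mirror the proof of Proposition~\ref{pc2}, with the two modifications that the presence of the locality projector forces: the correction term is now built from $\hat L=\mathcal{P}L$ rather than from $L$, and one must additionally invoke the pre-Leibniz hypothesis~(\ref{ec21}). First I would note that $\hat L=\mathcal{P}\circ L$ is still $C^\infty(M,\mathbb{R})$-multilinear, since $\mathcal{P}$ is $C^\infty(M,\mathbb{R})$-linear; consequently the right-Leibniz-rule computation for $[\cdot,\cdot]_E^{\hat\nabla}$ is word-for-word that of Proposition~\ref{pc2} with $L$ replaced by $\hat L$, which would establish that $(E,\rho,[\cdot,\cdot]_E^{\hat\nabla})$ is an almost-Leibniz algebroid.

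The crux is the auxiliary fact that on a pre-Leibniz algebroid one has $L(Df,u,v)\in ker(\rho)$ for all $f\in C^\infty(M,\mathbb{R})$ and $u,v\in\mathfrak{X}(E)$. I would prove this by applying the anchor to the left-Leibniz rule~(\ref{ec5}): using $\rho(fu)=f\rho(u)$, the fact that $\rho$ intertwines $[\cdot,\cdot]_E$ with the Lie bracket~(\ref{ec21}), and the Leibniz rule $[f\rho(u),\rho(v)]=f[\rho(u),\rho(v)]-\rho(v)(f)\rho(u)$ for the Lie bracket on $\mathfrak{X}(M)$, all terms cancel except $\rho(L(Df,u,v))=0$. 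Since an algebroid carrying a locality projector is regular, $ker(\rho)$ is a subbundle, and by condition~(2) of Definition~\ref{dc8} the projector acts as the identity on its sections; hence $\hat L(Df,u,v)=\mathcal{P}L(Df,u,v)=L(Df,u,v)$. Running the left-Leibniz computation for $[\cdot,\cdot]_E^{\hat\nabla}$ exactly as in Proposition~\ref{pc2} then produces the left-Leibniz rule with $L(Df,u,v)-\hat L(Df,u,v)$ playing the role of the locality-operator term; this residual term now vanishes, so by the remark following Definition~\ref{dc5} the triple $(E,\rho,[\cdot,\cdot]_E^{\hat\nabla})$ is an almost-dull algebroid. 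Finally I would check~(\ref{ec21}) for the modified bracket: $\rho([u,v]_E^{\hat\nabla})=\rho([u,v]_E)-\rho\big(\hat L(e^a,\nabla_{X_a}u,v)\big)=[\rho(u),\rho(v)]$, the correction dropping out because $im(\hat L)\subseteq ker(\rho)$ by condition~(1) of Definition~\ref{dc8}. Hence $(E,\rho,[\cdot,\cdot]_E^{\hat\nabla})$ is a pre-dull algebroid.

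For the second assertion, suppose $\nabla$ is admissible, i.e.\ (\ref{ec8}) holds. Then I would compute
\begin{align}
[u,v]_E^{\hat\nabla}+[v,u]_E^{\hat\nabla} &= \big([u,v]_E+[v,u]_E\big)-\mathcal{P}\big([u,v]_E+[v,u]_E\big) \nonumber\\
&= (1-\mathcal{P})\big([u,v]_E+[v,u]_E\big), \nonumber
\end{align}
where (\ref{ec8}) has been used to rewrite $\hat L(e^a,\nabla_{X_a}u,v)+\hat L(e^a,\nabla_{X_a}v,u)=\mathcal{P}\big([u,v]_E+[v,u]_E\big)$. Applying $\rho$ and using~(\ref{ec21}) gives $\rho\big([u,v]_E+[v,u]_E\big)=[\rho(u),\rho(v)]+[\rho(v),\rho(u)]=0$, so $[u,v]_E+[v,u]_E$ is a section of $ker(\rho)$, on which $\mathcal{P}$ is the identity; therefore the right-hand side vanishes and $[\cdot,\cdot]_E^{\hat\nabla}$ is anti-symmetric. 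A pre-dull algebroid whose bracket is anti-symmetric is a pre-Lie algebroid (Definition~\ref{dc11}), completing the argument.

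I expect the only genuinely delicate point to be the claim $L(Df,u,v)\in ker(\rho)$: this is the one place where the pre-Leibniz condition~(\ref{ec21}), as opposed to the weaker almost-Leibniz assumption underlying Proposition~\ref{pc2}, is indispensable. Without it, the left-Leibniz rule for $[\cdot,\cdot]_E^{\hat\nabla}$ would retain the residual locality operator $(1-\mathcal{P})L$, which need not vanish, so $(E,\rho,[\cdot,\cdot]_E^{\hat\nabla})$ would fail to be almost-dull, and the anti-symmetry argument in the admissible case would break down as well. Everything else is routine bookkeeping patterned on Proposition~\ref{pc2}.
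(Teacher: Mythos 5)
Your proposal is correct and follows essentially the same route as the paper's proof: establish the right- and left-Leibniz rules for $[\cdot,\cdot]_E^{\hat{\nabla}}$, use $im(\hat{L})\subseteq ker(\rho)$ to verify the anchor condition, and in the admissible case apply $\rho$ to $[u,v]_E+[v,u]_E$ to see that $\mathcal{P}$ acts as the identity on the symmetrized bracket, forcing anti-symmetry. The one point where you go beyond the paper is that you actually derive the auxiliary fact $\rho(L(Df,u,v))=0$ from the pre-Leibniz condition (\ref{ec21}) applied to the left-Leibniz rule, whereas the paper simply asserts it; this is a welcome addition rather than a different approach.
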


\begin{proof} We need to show that the projected modified bracket $[.,.]_E^{\hat{\nabla}}$ is a local almost-Leibniz bracket with a vanishing locality operator. First, we prove the right-Leibniz rule:
\begin{align} 
[u, f v]_E^{\hat{\nabla}} &= [u, f v]_E - \mathcal{P} L(e^a, \nabla_{X_a} u, f v) \nonumber\\
&= \rho(u)(f) v + f [u, v]_E - \mathcal{P} (f L(e^a, \nabla_{X_a} u, v)) \nonumber\\
&= \rho(u)(f) v + f [u, v]_E - f \mathcal{P} L(e^a, \nabla_{X_a} u, v) \nonumber\\
&= \rho(u)(f) v + f \left\{ [u, v]_E - \mathcal{P} L(e^a, \nabla_{X_a} u, v) \right\} \nonumber\\
&= \rho(u)(f) v + f [u, v]_E^{\hat{\nabla}}, \nonumber
\end{align}
so that $(E, \rho, [.,.]_E^{\hat{\nabla}})$ is an almost-Leibniz algebroid. Next, we prove the left-Leibniz rule:
\begin{align} 
[f u, v]_E^{\hat{\nabla}} &= [f u, v]_E - \mathcal{P} L(e^a, \nabla_{X_a} (f u), v) \nonumber\\
&= - \rho(v)(f) u + f [u, v]_E + L(Df, u, v) - \mathcal{P} L(e^a, \rho(X_a)(f) u + f \nabla_{X_a} u, v) \nonumber\\
&= - \rho(v)(f) u + f [u, v]_E + L(Df, u, v) - \mathcal{P} L(Df, u, v) - \mathcal{P} (f L(e^a, \nabla_{X_a} u, v)) \nonumber\\
&= - \rho(v)(f) u + f [u, v]_E + L(Df, u, v) - L(Df, u, v) - f \mathcal{P} L(e^a, \nabla_{X_a} u, v) \nonumber\\
&= - \rho(v)(f) u + f \left\{ [u, v]_E - \mathcal{P} L(e^a, \nabla_{X_a} u, v) \right\} \nonumber\\
&= - \rho(v)(f) u + f [u, v]_E^{\hat{\nabla}} + 0. \nonumber
\end{align}
This means that the locality operator on the almost-Leibniz algebroid $(E, \rho, [.,.]_E^{\hat{\nabla}})$ can be chosen as 0. Hence, we proved that $(E, \rho, [.,.]_E^{\hat{\nabla}})$ is an almost-dull algebroid. Note that here we used the fact that $\rho(L(Df, u, v)) = 0$ for all $u, v \in \mathfrak{X}(E), f \in C^{\infty}(M, \mathbb{R})$, so that $\mathcal{P} L(Df, u, v) = L(Df, u, v)$. Next, we prove that the projected modified bracket is a pre-dull bracket:
\begin{align} 
\rho \left( [u, v]_E^{\hat{\nabla}} \right) &= \rho \left( [u, v]_E - \mathcal{P} L(e^a, \nabla_{X_a} u, v) \right) \nonumber\\
&= \rho([u, v]_E) - \rho(\mathcal{P} L(e^a, \nabla_{X_a} u, v)) \nonumber\\
&= \rho([u, v]_E) - 0 \nonumber\\
&= [\rho(u), \rho(v)]. \nonumber
\end{align}
Hence, $(E, \rho, [.,.]_E^{\hat{\nabla}})$ is a pre-dull algebroid. Next, we prove that the projected modified bracket is $[.,.]_E^{\hat{\nabla}}$ is anti-symmetric for an admissible linear $E$-connection $\nabla$. In order to prove this, we need to observe that the definition
\begin{equation} [u, v]_E + [v, u]_E = L(e^a, \nabla_{X_a} u, v) + L(e^a, \nabla_{X_a} v, u), \nonumber
\end{equation}
implies that
\begin{equation} \rho \left( [u, v]_E + [v, u]_E \right) = \rho \left( L(e^a, \nabla_{X_a} u, v) + L(e^a, \nabla_{X_a} v, u) \right). \nonumber
\end{equation}
As the bracket $[.,.]_E$ is a pre-Leibniz bracket, then the left-hand side of this equation vanishes:
\begin{equation} \rho \left( [u, v]_E + [v, u]_E \right) = \rho \left( [u, v]_E \right) + \rho \left( [v, u]_E \right) = [\rho(u), \rho(v)] + [\rho(v), \rho(u)] = 0, \nonumber
\end{equation}
which follows from the anti-symmetry of the usual Lie bracket. Hence, we get
\begin{equation} \rho \left( L(e^a, \nabla_{X_a} u, v) + L(e^a, \nabla_{X_a} v, u) \right) = 0, \nonumber
\end{equation}
which means that $L(e^a, \nabla_{X_a} u, v) + L(e^a, \nabla_{X_a} v, u)$ is in the kernel of the anchor $\rho$, so that 
\begin{equation} \mathcal{P} \left( L(e^a, \nabla_{X_a} u, v) + L(e^a, \nabla_{X_a} v, u) \right) = L(e^a, \nabla_{X_a} u, v) + L(e^a, \nabla_{X_a} v, u). \nonumber
\end{equation}
The $\mathbb{R}$-linearity of $\mathcal{P}$ and the admissibility of $\nabla$ imply that 
\begin{equation} [u, v]_E + [v, u]_E = \mathcal{P} L(e^a, \nabla_{X_a} u, v) + \mathcal{P} L(e^a, \nabla_{X_a} v, u). \nonumber\\
\end{equation}
In other words, $\mathcal{P} L \in [L]_{\nabla}$. Now with this information, we can prove the anti-symmetry of the projected modified bracket for an admissible linear $E$-connection:
\begin{equation} 
[v, u]_E^{\hat{\nabla}} = [v, u]_E - \mathcal{P} L(e^a, \nabla_{X_a} v, u) = - [u, v]_E - \mathcal{P} L(e^a, \nabla_{X_a} u, v)
= - [u, v]_E^{\hat{\nabla}}. \nonumber
\end{equation}
Hence we proved $(E, \rho, [.,.]_E^{\hat{\nabla}})$ is a pre-Lie algebroid for an admissible linear $E$-connection $\nabla$.
\end{proof}

\begin{Corollary} For an admissible linear $E$-connection $\nabla$, the $E$-curvature operator satisfies the following anti-symmetry property
\begin{equation} R(\nabla)(u, v) w = - R(\nabla)(v, u) w,
\label{ec28}
\end{equation}
for all $u, v, w \in \mathfrak{X}(E)$.
\label{cc3}
\end{Corollary}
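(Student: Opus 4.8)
The plan is to mirror the strategy used for the $E$-torsion operator in Corollary~\ref{cc2}: rewrite the $E$-curvature operator in terms of the projected modified bracket, and then invoke the anti-symmetry of that bracket. The crucial input is Proposition~\ref{pc3}, according to which, for an admissible linear $E$-connection $\nabla$, the triplet $(E, \rho, [\cdot,\cdot]_E^{\hat{\nabla}})$ is a pre-Lie algebroid; in particular $[u, v]_E^{\hat{\nabla}} = - [v, u]_E^{\hat{\nabla}}$ for all $u, v \in \mathfrak{X}(E)$.

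First I would recall that, once a locality projector is fixed, the $E$-curvature operator admits the form (\ref{ec25}),
\[
R(\nabla)(u, v) w = \nabla_u \nabla_v w - \nabla_v \nabla_u w - \nabla_{[u, v]_E^{\hat{\nabla}}} w,
\]
valid for any linear $E$-connection $\nabla$. Interchanging $u$ and $v$ changes the sign of $\nabla_u \nabla_v w - \nabla_v \nabla_u w$ trivially, since this combination is manifestly anti-symmetric in $u$ and $v$. For the remaining term, admissibility enters exactly through Proposition~\ref{pc3}: one has $[v, u]_E^{\hat{\nabla}} = - [u, v]_E^{\hat{\nabla}}$, and since the assignment $x \mapsto \nabla_x w$ is $C^{\infty}(M, \mathbb{R})$-linear in $x$ by the second axiom of an $E$-connection, this yields $\nabla_{[v, u]_E^{\hat{\nabla}}} w = - \nabla_{[u, v]_E^{\hat{\nabla}}} w$. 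Summing the three contributions gives $R(\nabla)(v, u) w = - R(\nabla)(u, v) w$, which is the claim.

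Equivalently, and more conceptually, one may note that $R(\nabla)$ is precisely the unmodified $E$-curvature operator of $\nabla$ viewed as a linear $E$-connection on the pre-Lie algebroid $(E, \rho, [\cdot,\cdot]_E^{\hat{\nabla}})$, so (\ref{ec28}) reduces to the anti-symmetry of the curvature that holds on any algebroid with an anti-symmetric bracket — the same mechanism that produced Corollary~\ref{cc2} for the $E$-torsion.

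As for difficulty: there is essentially no obstacle, since all of the work has been absorbed into Proposition~\ref{pc3}. The only point requiring a little care is sign bookkeeping — tracking the projected locality correction $\hat{L}(e^a, \nabla_{X_a} u, v)$ correctly when passing from the defining expression (\ref{ec22}) to (\ref{ec25}) — but this is already settled in the derivation of (\ref{ec25}). Consequently the proof amounts to a two-line computation together with a reference to Proposition~\ref{pc3}.
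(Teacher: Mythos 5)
Your proposal is correct and follows essentially the same route as the paper: the paper's own proof likewise observes that $R(\nabla)$ is the unmodified $E$-curvature operator of the pre-Lie algebroid $(E, \rho, [\cdot,\cdot]_E^{\hat{\nabla}})$ and invokes the anti-symmetry of the projected modified bracket from Proposition~\ref{pc3}. Your explicit term-by-term verification of (\ref{ec25}) under the interchange $u \leftrightarrow v$ is just a spelled-out version of that same argument.
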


\begin{proof} This follows from the fact that for an admissible linear $E$-connection, the projected modified $[.,.]_E^{\hat{\nabla}}$ is anti-symmetric. The $E$-curvature operator $R(\nabla)$ can be seen as the $E$-curvature operator on the pre-Lie algebroid $(E, \rho, [.,.]_E^{\hat{\nabla}})$ without any modification.
\end{proof}

Before going on with the detailed properties of admissible linear $E$-connections, we should focus on some examples from the literature.

\begin{itemize}
\item \textbf{Almost-Lie algebroids:} Every almost-Lie algebroid is an almost-dull algebroid, so one can consider them as local almost-Leibniz algebroids with a vanishing locality operator. As $L = 0$, every linear $E$-connection is admissible. In particular, $(T(M), id_{T(M)}, [\cdot,\cdot], 0)$ is a local almost-Leibniz algebroid. In the usual metric-affine geometry one can consider any arbitrary connection and this is coherent with the fact that every linear $E$-connection on an almost-Lie algebroid is admissible. Every \textbf{pre-Lie} and \textbf{Lie algebroid}, where the letter also satisfies the Jacobi identity, is an almost-Lie algebroid. Moreover, \textbf{$H$-twisted Lie algebroids} \cite{18} that satisfy a modification of the Jacobi identity by an $E$-3-form is also an almost-Lie algebroid. Hence, everything is also valid for these algebroids.
\item \textbf{Almost-Courant algebroids:} An almost-Leibniz algebroid $(E, \rho, [\cdot,\cdot]_E)$ whose bracket satisfies

\begin{equation} [u, v]_E + [v, u]_E = g^{-1}(D(g(u, v))),
\label{ec29}
\end{equation}
for some $E$-metric $g$ is called an almost-Courant algebroid, which is denoted by $(E, \rho, [\cdot,\cdot]_E, g)$ \cite{12}. Every \textbf{Courant algebroid} is in fact an almost-Courant algebroid, so everything is also valid for Courant algebroids, which are the necessary mathematical structure in generalized geometries. \textbf{Pre-Courant algebroids} in the sense of \cite{19}, \cite{20} and $H$\textbf{-twisted Courant algebroids} \cite{21} are also almost-Courant algebroids by definition. Moreover, every \textbf{metric algebroid} is also an almost-Courant algebroid \cite{6} which underlies the para-Hermitian formulation of double field theory.
\begin{Proposition} On an almost-Courant algebroid $(E, \rho, [\cdot,\cdot]_E, g)$, a linear $E$-connection is admissible if and only if it is $E$-metric-$g$-compatible.
\label{pc4}
\end{Proposition}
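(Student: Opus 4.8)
The plan is to make the locality operator $L$ of an almost-Courant algebroid explicit, substitute it into the anti-commutability condition (\ref{ec8}), and then recognize what comes out as a rewriting of the non-metricity tensor $Q(\nabla,g)=\nabla g$. Recall that, by Definitions \ref{dc3}--\ref{dc4}, a linear $E$-connection $\nabla$ is admissible precisely when (\ref{ec8}) holds. First I would determine $L$: applying Proposition \ref{pc1} with $S(u,v):=g^{-1}(D(g(u,v)))$ and using the product rule $D(fh)=h\,Df+f\,Dh$ (immediate from $(Df)(u)=\rho(u)(f)$), one gets $S(fu,v)=f\,S(u,v)+g(u,v)\,g^{-1}(Df)$; comparing with (\ref{ec7}) identifies the locality operator of an almost-Courant algebroid as
\[
L(\xi,u,v)=g(u,v)\,g^{-1}(\xi),\qquad \xi\in\Omega^1(E),\ u,v\in\mathfrak{X}(E),
\]
which is $C^\infty(M,\mathbb{R})$-multilinear in all three slots by bilinearity and symmetry of $g$ together with $C^\infty(M,\mathbb{R})$-linearity of $g^{-1}$.

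Next I would test both sides of (\ref{ec8}) against an arbitrary $z\in\mathfrak{X}(E)$ via $g$, which loses no information because $g$ is non-degenerate. For the right-hand side, writing $L(e^a,\nabla_{X_a}u,v)=g(\nabla_{X_a}u,v)\,g^{-1}(e^a)$ and using $g(g^{-1}(e^a),z)=\langle e^a,z\rangle$ together with $\langle e^a,z\rangle\nabla_{X_a}u=\nabla_z u$ (from $C^\infty(M,\mathbb{R})$-linearity of $\nabla$ in its first slot) gives
\[
g\big(L(e^a,\nabla_{X_a}u,v)+L(e^a,\nabla_{X_a}v,u),\,z\big)=g(\nabla_z u,v)+g(\nabla_z v,u).
\]
For the left-hand side, the almost-Courant condition (\ref{ec29}) gives $[u,v]_E+[v,u]_E=g^{-1}(D(g(u,v)))$, and pairing with $z$ yields $\langle D(g(u,v)),z\rangle=\rho(z)(g(u,v))$. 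Hence, by non-degeneracy of $g$, condition (\ref{ec8}) is equivalent to
\[
\rho(z)\big(g(u,v)\big)=g(\nabla_z u,v)+g(\nabla_z v,u)\qquad\text{for all }u,v,z\in\mathfrak{X}(E).
\]

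To conclude, I would invoke the Leibniz-rule extension of $\nabla$ to $E$-tensors: $(\nabla_z g)(u,v)=\rho(z)(g(u,v))-g(\nabla_z u,v)-g(\nabla_z v,u)$, the first term being $\nabla_z$ acting on the function $g(u,v)$. Therefore the displayed identity holds for all $u,v,z$ if and only if $Q(\nabla,g)=\nabla g=0$, i.e.\ $\nabla$ is $E$-metric-$g$-compatible; since admissibility is exactly condition (\ref{ec8}), the two are equivalent. (In particular, the existence of metric-compatible $E$-connections guarantees that an almost-Courant algebroid is indeed anti-commutable, so the notion of admissibility is non-vacuous here.)

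I expect the only mildly delicate point to be the contraction step: one must check that pairing $L(e^a,\nabla_{X_a}u,v)$ with the metric reproduces the frame-independent $1$-form $z\mapsto g(\nabla_z u,v)$ rather than an artifact of the chosen $E$-frame $(X_a)$. Phrasing the whole computation as ``apply $g(\,\cdot\,,z)$ for arbitrary $z$ and use non-degeneracy'' makes the frame-independence transparent; the rest is routine substitution of the formula for $L$.
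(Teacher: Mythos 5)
Your proof is correct and follows essentially the same route as the paper: identify the locality operator as $L(\xi,u,v)=g(u,v)\,g^{-1}(\xi)$, substitute it into the admissibility condition (\ref{ec8}), and recognize the resulting identity as $\nabla g=0$. The only cosmetic difference is that you extract the scalar identity by pairing with an arbitrary $z\in\mathfrak{X}(E)$ and invoking non-degeneracy of $g$, whereas the paper compares coefficients of $g^{-1}(e^a)$ directly in the frame expansion.
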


\begin{proof} First, we need to observe that every almost-Courant algebroid can be considered as a local almost-Leibniz algebroid with the locality operator
\begin{equation} L(\Omega, u, v) = g(u, v) g^{-1}(\Omega), \nonumber
\end{equation}
so that the admissibility condition (\ref{ec8}) becomes
\begin{align} 
g(\nabla_{X_a} u, v) g^{-1}(e^a) + g(\nabla_{X_a} v, u) g^{-1}(e^a) &= g^{-1} (D(g(u, v))) \nonumber\\
&= g^{-1}(\rho(X_a)(g)(u, v) e^a) \nonumber\\
&= \rho(X_a)(g(u, v)) g^{-1}(e^a). \nonumber
\end{align}
This implies that (and is implied by)
\begin{equation} \rho(X_a)(g(u, v)) - g(\nabla_{X_a} u, v) - g(u, \nabla_{X_a} v) = 0, \nonumber
\end{equation}
which becomes by using the $C^{\infty}(M, \mathbb{R})$-linearity of the anchor $\rho$, the $E$-metric $g$ and the first component of the linear $E$-connection $\nabla$
\begin{equation} Q(\nabla, g)(w, u, v) = 0, \nonumber
\end{equation}
so that a linear $E$-connection $\nabla$ is admissible if and only if it is $E$-metric-$g$-compatible.
\end{proof}

\noindent Courant algebroid connections are usually defined to be $E$-metric-$g$-compatible in the literature, \cite{22}. This compatibility condition is  noted as required for the nice properties such as the anti-symmetry of the $E$-torsion. Hence, the admissibility of $\nabla$ gives a more comprehensive explanation about why this is the case.

\item \textbf{Higher-Courant algebroids:} Motivated from the standard exact Courant algebroid of the form $\mathbb{T}(M) := T(M) \oplus T^*(M)$, one can define the higher analogues of Courant algebroids. 
\begin{Definition} The quartet $(\mathbb{T}^p(M), \rho, [.,.]_D, g)$ is called a (standard) higher-Courant algebroid where $\mathbb{T}^p(M) := T(M) \oplus \Lambda^p(T^*(M))$, the anchor $\rho$ is given by the projection onto the first component, higher-Dorfman bracket $[.,.]_D$ and the $\Lambda^{p-1}(T^*(M))$-valued $E$-metric $g$ are defined by
\begin{align} 
&[U + \omega, V + \eta]_D := [U, V] + \mathcal{L}_U \eta - \iota_V d \omega, \nonumber\\
&g(U + \omega, V + \eta) := \iota_U \eta + \iota_V \omega,
\label{ec30}
\end{align}
for all $U, V \in \mathfrak{X}(M), \omega, \eta \in \Omega^p(M)$ \cite{8}, \cite{13}.
\label{dc12}
\end{Definition}
\noindent As we will show in the proof of the following proposition, $(\mathbb{T}^p(M), \rho, [.,.]_D)$ is a local pre-Leibniz algebroid.

\begin{Proposition} On a higher-Courant algebroid $(\mathbb{T}^p(M), \rho, [.,.]_D, g)$, a linear $E$-connection is admissible if and only if $\nabla$ is $\Lambda^{p-1}(T^*(M))$-valued $E$-metric-$g$-compatible in the sense that
\begin{equation} \iota_w d(g(u, v)) = g(\nabla_w u, v) + g(u, \nabla_w v),
\label{ec31}
\end{equation}
for all $u, v, w \in \mathfrak{X}(E)$.
\label{pc5}
\end{Proposition}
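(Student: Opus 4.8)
The plan is to follow the template of the almost-Courant case (Proposition \ref{pc4}) in three stages: first realize $(\mathbb{T}^p(M), \rho, [\cdot,\cdot]_D)$ as a local pre-Leibniz algebroid and read off its locality operator; then compute the symmetric part of the higher-Dorfman bracket; and finally substitute into the admissibility condition (\ref{ec8}) and massage it into (\ref{ec31}).

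For the first stage, write $u = U + \omega$, $v = V + \eta$ with $U,V \in \mathfrak{X}(M)$ and $\omega,\eta \in \Omega^p(M)$. The right-Leibniz rule $[u, fv]_D = \rho(u)(f)v + f[u,v]_D$ follows from $\mathcal{L}_U(f\eta) = U(f)\eta + f\mathcal{L}_U\eta$ and the $C^\infty(M,\mathbb{R})$-linearity of $\iota_V$, and the condition $\rho([u,v]_D) = [\rho(u),\rho(v)]$ of Definition \ref{dc7} is immediate because the $T(M)$-component of $[u,v]_D$ is just $[U,V]$. The load-bearing computation is the left-Leibniz rule: using $d(f\omega) = df\wedge\omega + f\,d\omega$, $\iota_V(df\wedge\omega) = (Vf)\omega - df\wedge\iota_V\omega$, $\mathcal{L}_{fU}\eta = f\mathcal{L}_U\eta + df\wedge\iota_U\eta$ and $[fU,V] = f[U,V] - (Vf)U$, one obtains
\begin{equation*}
[fu,v]_D = -\rho(v)(f)\,u + f[u,v]_D + df\wedge g(u,v),
\end{equation*}
so $(\mathbb{T}^p(M), \rho, [\cdot,\cdot]_D, L)$ is a local pre-Leibniz algebroid whose locality operator is determined on coboundaries by $L(Df, u, v) = df\wedge g(u,v)$ (recall $Df=\rho^*(df)$), extended $C^\infty(M,\mathbb{R})$-multilinearly to all of $\Omega^1(E)$ via its $T^*(M)$-component. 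For the second stage, the $T(M)$-component of $[u,v]_D + [v,u]_D$ vanishes by anti-symmetry of the Lie bracket, while its $\Lambda^p(T^*(M))$-component is $\mathcal{L}_U\eta + \mathcal{L}_V\omega - \iota_V d\omega - \iota_U d\eta = d(\iota_U\eta + \iota_V\omega) = d(g(u,v))$ by Cartan's magic formula (\ref{eb2}); hence the map $S$ of Proposition \ref{pc1} is $S(u,v) = d(g(u,v))$, which satisfies $S(fu,v) = fS(u,v) + L(Df,u,v)$ by the Leibniz rule for $d$.

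For the third stage, the admissibility condition (\ref{ec8}) now reads $d(g(u,v)) = L(e^a, \nabla_{X_a}u, v) + L(e^a, \nabla_{X_a}v, u)$. I would prove the equivalence with (\ref{ec31}) by localizing: both sides of (\ref{ec31}) are $C^\infty(M,\mathbb{R})$-linear in $w$, so (\ref{ec31}) for all $w$ is equivalent to $g(\nabla_{X_a}u,v) + g(\nabla_{X_a}v,u) = \iota_{\rho(X_a)}d(g(u,v))$ for every frame index $a$. Expanding $e^a$ into its $T^*(M)\oplus\Lambda^p(T(M))$ components, using the identity $\sum_a \langle e^a, W+0\rangle\,\rho(X_a) = W$ (equivalently, $\sum_a$ of the $T^*(M)$-component of $e^a$ tensored with $\rho(X_a)$ equals $\mathrm{id}_{T(M)}$), and the standard $\iota$–$\wedge$ identities, one converts the contracted relation into the uncontracted admissibility identity and conversely; one also checks that the $\Lambda^p(T(M))$-part of the locality operator is fixed by the chosen extension so that the $\ker\rho$-directions are handled correctly, where (\ref{ec31}) forces $g(\nabla_w u, v) + g(\nabla_w v, u) = 0$ whenever $\rho(w) = 0$.

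The step I expect to be the main obstacle is precisely this last bookkeeping: correctly describing $\Omega^1(E)$ for $E = \mathbb{T}^p(M)$ and the exact extension of the locality operator to it, and tracking the numerical factors that appear when one compares the Euler-type identity $\sum_a(\text{coframe})\wedge\iota_{(\text{frame})}(\text{$p$-form}) = p\cdot(\text{$p$-form})$ against the single exterior derivative in $d(g(u,v))$. The almost-Courant case $p=1$ sees none of this, so the higher case genuinely requires the extra care; everything else is routine Cartan calculus.
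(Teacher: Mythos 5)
Your first two stages are correct and coincide with what the paper does: the left-Leibniz computation yields $[fu,v]_D=-\rho(v)(f)u+f[u,v]_D+df\wedge g(u,v)$, so the locality operator can be taken as $L(\Omega,u,v)=pr_1^*(\Omega)\wedge g(u,v)$, and the symmetric part of the higher-Dorfman bracket is $d(g(u,v))$ by the Cartan magic formula (\ref{eb2}). The gap is that your third stage --- the actual equivalence between the admissibility condition (\ref{ec8}) and (\ref{ec31}) --- is never carried out: you describe a localization-in-$w$ scheme and then flag its key step as ``the main obstacle.'' That step is the entire content of the proposition. Concretely, with the chosen $L$ only the $T(M)$-part of the coframe survives in $\sum_a pr_1^*(e^a)\wedge[\,\cdots]$ (since $pr_1^*(e^a)=0$ and $\rho(X_a)=0$ on the $\Lambda^p(T^*(M))$-directions), so admissibility reads $d(g(u,v))=\sum_r E^r\wedge\Phi_r$ with $\Phi_r:=g(\nabla_{(x_r,0)}u,v)+g(u,\nabla_{(x_r,0)}v)$, whereas (\ref{ec31}) pins down each $\Phi_r$ individually as $\iota_{x_r}d(g(u,v))$ and in addition forces $g(\nabla_wu,v)+g(u,\nabla_wv)=0$ for $w\in\ker\rho$ --- directions that do not enter the admissibility equation at all. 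Passing between the contracted and uncontracted statements is exactly where your worry about the Euler identity $\sum_rE^r\wedge\iota_{x_r}\alpha=p\,\alpha$ bites: for $p>1$ the map $(\Phi_r)_r\mapsto\sum_rE^r\wedge\Phi_r$ is neither injective (only the fully antisymmetrized data survives the wedge) nor normalized so as to reproduce a single copy of $d(g(u,v))$.

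For comparison, the paper closes this step by invoking the identity $d\omega=E^r\wedge\iota_{x_r}d\omega$ for $\omega\in\Omega^{p-1}(M)$ and then stripping off $E^r\wedge$ to conclude $\iota_{x_r}d(g(u,v))=\Phi_r$; your instinct that the coefficient in that identity is $p$ rather than $1$ is the standard Cartan-calculus fact, so you have in fact located the one step of the argument that is genuinely delicate for $p>1$ (and that would need either a rescaled extension of the locality operator off the coboundaries, an argument for why the stripping is legitimate, or a separate treatment of the $\ker\rho$ directions). As submitted, however, your proposal establishes the two preparatory computations and leaves the equivalence itself unproven, so it does not yet constitute a proof of the proposition.
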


\begin{proof} We start by observing that any higher-Courant algebroid is a local pre-Leibniz algebroid with the locality operator
\begin{equation} ^g L(\Omega, u, v) := pr_1^*(\Omega) \wedge g(u, v), \nonumber
\end{equation}
where $pr_1^*: \mathbb{T}^{p*}(M) := T^*(M) \oplus \Lambda^p(T(M)) \to T^*(M)$ is the projection onto the first component, which follows from the definition of the higher-Dorfman bracket (\ref{ec30}). Next, by using the Cartan magic formula (\ref{eb2}) for usual $p$-forms we can show that
\begin{equation} [u, v]_D + [v, u]_D = [U + \omega, V + \eta]_D + [V + \eta, U + \omega]_D = d(g(u, v)), \nonumber
\end{equation}
where the anti-symmetry of the Lie bracket is also used. Then, the admissibility condition (\ref{ec8}) for a linear $E$-connection $\nabla$ reads
\begin{align}
d(g(u, v)) &= L(e^a, \nabla_{X_a} u, v) + L(e^a, \nabla_{X_a} v, u) \nonumber\\
&= pr_1^*(e^a) \wedge g(\nabla_{X_a} u, v) + pr_1^*(e^a) \wedge g(u, \nabla_{X_a} v). \nonumber
\end{align}
Now we observe that a local frame of $\mathbb{T}^p(M)$ is of the form $X_a = (x_r, E^{r_1} \wedge \ldots \wedge E^{r_p})$ where $(x_r)$ is a usual local frame on the tangent bundle $T(M)$ whose dual local coframe is $(E^r)$. Similarly, a local frame of $\mathbb{T}^{p*}(M)$ is of the form $e^a = (E^r, x^{r_1 \ldots r_p})$ where $(x^{r_1 \ldots r_p})$ is a local frame for the multi-vectors $\Lambda^p(T(M))$. For the anchor and projection maps we have:
\begin{align}
& \rho(X_a) = x_r, \nonumber\\
& pr_1^*(e^a) = E^r. \nonumber
\end{align}
Combining these with the following fact for a usual $(p-1)$-form $\omega \in \Omega^{p-1}(M)$
\begin{equation} d \omega = E^r \wedge \iota_{x_r} d \omega, \nonumber\\
\end{equation}
we get
\begin{equation} \iota_{X_a} d(g(u, v)) = g(\nabla_{X_a} u, v) + g(u, \nabla_{X_a} v). \nonumber
\end{equation}
By the $C^{\infty}(M, \mathbb{R})$-linearity of the $E$-metric $g$, interior product and the first component of a linear $E$-connection, we get the desired result.

\end{proof}
Note that for a consistency check for the Courant algebroid case when $p = 1$, we have:
\begin{equation} \iota_{\rho(w)} d(g(u, v)) = d(g(u, v))(\rho(w)) = \rho(w)(g(u, v)),
\label{ec31b}
\end{equation}
which is the required term for Courant algebroids. Moreover, the chosen locality operator $^g L$ in the above proof satisfies $\rho L(\Omega, u, v)$ for all $\Omega \in \Omega^1(E), u, v \in \mathfrak{X}(E)$, so that one does not need a locality projector. 

\item \textbf{Conformal Courant algebroids:} One can generalize the notion of Courant algebroids, which are defined with respect to an $E$-metric, to conformal Courant algebroids in which the defining notion is a conformal structure depending on a line bundle.
\begin{Definition} A sextet $(E, \rho, [.,.]_E, R, g, ^R \nabla)$ is called a conformal Courant algebroid if $(E, \rho, [.,.]_E)$ is a Leibniz algebroid over $M$ (i.e. an almost-Leibniz algebroid whose bracket satisfies the Leibniz identity), $R$ is a line bundle over $M$, $g$ is an $R$-valued $E$-metric and $^R \nabla$ is an $E$-connection on $R$ satisfying
\begin{align}
&^R \nabla_u (g(v, w)) = g([u, v]_E, w) + g(v, [u, w]_E), \nonumber\\
&[u, v]_E + [v, u]_E = g^{-1} \left( ^R \nabla(g(u, v)) \right),
\label{ec32}
\end{align}
for all $u, v, w \in \mathfrak{X}(E)$ \cite{10}.
\label{dc13}
\end{Definition}

\begin{Proposition} On a conformal Courant algebroid $(E, \rho, [.,.]_E, R, g, ^R \nabla)$, a linear $E$-connection $\nabla$ is admissible if and only if $\nabla$ is $R$-valued $E$-metric-$g$-compatible in the sense that
\begin{equation} ^R \nabla_u (g(v, w)) = g(\nabla_u v, w) + g(v, \nabla_u w),
\label{ec33}
\end{equation}
for all $u, v, w \in \mathfrak{X}(E)$.
\label{pc6}
\end{Proposition}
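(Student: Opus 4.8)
The plan is to follow the template of Propositions~\ref{pc4} and~\ref{pc5}: exhibit the locality operator under which a conformal Courant algebroid becomes a local almost-Leibniz algebroid, then unwind the admissibility condition~(\ref{ec8}) and identify it with the stated $R$-valued metric-compatibility~(\ref{ec33}).

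First I would show that a conformal Courant algebroid $(E,\rho,[.,.]_E,R,g,{}^R\nabla)$ is a local almost-Leibniz algebroid with locality operator
\[
L(\Omega, u, v) := g^{-1}\big(\Omega \otimes g(u,v)\big),
\]
where $g(u,v)\in\Gamma(R)$ and $g^{-1}\colon E^*\otimes R\to E$ is the bundle isomorphism determined by the non-degeneracy of the $R$-valued $E$-metric $g$; this $L$ is $C^\infty(M,\mathbb{R})$-multilinear by construction. To obtain the left-Leibniz rule of Definition~\ref{dc2} I would invoke Proposition~\ref{pc1} with $S(u,v):=[u,v]_E+[v,u]_E=g^{-1}\big({}^R\nabla(g(u,v))\big)$, which is symmetric since a line-bundle-valued metric is symmetric. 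Using the Leibniz rule of the $E$-connection ${}^R\nabla$ on $R$, namely ${}^R\nabla_w(f\,g(u,v))=\rho(w)(f)\,g(u,v)+f\,{}^R\nabla_w(g(u,v))$, one gets ${}^R\nabla(g(fu,v))=(Df)\otimes g(u,v)+f\,{}^R\nabla(g(u,v))$, hence $S(fu,v)=L(Df,u,v)+f\,S(u,v)$, which is exactly hypothesis~(\ref{ec7}). Therefore admissibility of a linear $E$-connection $\nabla$ is governed by~(\ref{ec8}).

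Then I would plug this $L$ into~(\ref{ec8}). On a local $E$-frame $(X_a)$ with dual $E$-coframe $(e^a)$ it becomes
\[
g^{-1}\big({}^R\nabla(g(u,v))\big)=g^{-1}\big(e^a\otimes g(\nabla_{X_a}u,v)\big)+g^{-1}\big(e^a\otimes g(\nabla_{X_a}v,u)\big).
\]
Since $g^{-1}$ is a bundle isomorphism it can be cancelled; writing ${}^R\nabla(g(u,v))=e^a\otimes{}^R\nabla_{X_a}(g(u,v))$ and using $g(\nabla_{X_a}v,u)=g(u,\nabla_{X_a}v)$, contraction with $X_b$ gives ${}^R\nabla_{X_b}(g(u,v))=g(\nabla_{X_b}u,v)+g(u,\nabla_{X_b}v)$. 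Finally the $C^\infty(M,\mathbb{R})$-linearity of ${}^R\nabla$, of $g$, and of the first slot of $\nabla$ lets me replace the frame field $X_b$ by an arbitrary $w\in\mathfrak{X}(E)$, which is precisely~(\ref{ec33}). Each implication is reversible, so the converse holds too.

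The step I expect to be the main obstacle is the bookkeeping forced by the $R$-twist: one has to treat $g$ as a bundle map $E\to E^*\otimes R$ rather than $E\to E^*$, verify that $e^a\otimes g(\nabla_{X_a}u,v)$ is the correct coordinate presentation of the relevant $\Gamma(E^*\otimes R)$-valued object, and justify cancelling $g^{-1}$ --- all of which rely on non-degeneracy of the conformal metric. The subsidiary Leibniz-rule computation that pins down $L$ is the only other place requiring care; the remainder is the same bracket-modification argument already used for ordinary Courant algebroids in the proof of Proposition~\ref{pc4}.
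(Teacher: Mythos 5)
Your proposal is correct and follows essentially the same route as the paper: identify the locality operator $L(\Omega,u,v)=g^{-1}(g(u,v)\otimes\Omega)$, rewrite the admissibility condition (\ref{ec8}) on a local $E$-frame, cancel $g^{-1}$ by non-degeneracy, and extend from frame fields to arbitrary $w$ by $C^{\infty}(M,\mathbb{R})$-linearity. The only difference is that you spell out, via Proposition (\ref{pc1}) and the Leibniz rule for $^R\nabla$, why this $L$ indeed makes the structure a local pre-Leibniz algebroid --- a verification the paper merely asserts.
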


\begin{proof} Similar to the higher-Courant algebroids, we start with observing that any conformal Courant algebroid is a local pre-Leibniz algebroid with the locality operator
\begin{equation} ^R L(\Omega, u, v) := g^{-1}(g(u, v) \otimes \Omega).\nonumber
\end{equation}
By the defining property (\ref{ec32}), the admissibility condition (\ref{ec8}) for a linear $E$-connection $\nabla$ reads
\begin{align} 
g^{-1} \left( ^R \nabla(g(v, w)) \right) &= L(e^a, \nabla_{X_a} v, w) + L(e^a, \nabla_{X_a} w, v) \nonumber\\
&= g^{-1}(g(\nabla_{X_a} v, w) \otimes e^a) + g^{-1}(g(\nabla_{X_a} w, v) \otimes e^a) \nonumber\\
&= g^{-1} \left( \left[ g(\nabla_{X_a} v, w) + g(\nabla_{X_a} w, v) \right] \otimes e^a \right), \nonumber
\end{align}
which implies 
\begin{equation} ^R \nabla_{X_a} (g(v, w)) = g(\nabla_{X_a} v, w) + g(v, \nabla_{X_a} w). \nonumber
\end{equation}
Then, by using the $C^{\infty}(M, \mathbb{R})$-linearity of $\rho, g$ and the first component of an $E$-connection, we get the desired result.
\end{proof}
\end{itemize}

After proving that many algebroids in the literature yield admissible linear $E$-connections in the way that they are only metric-compatible in some generalized sense, from now on, we will try to promote the usefulness of these admissible linear $E$-connections. First useful identities are the generalizations of the Bianchi identities:
\begin{Proposition} If $\nabla$ is an admissible linear $E$-connection, then its $E$-curvature and $E$-torsion operators satisfy the following first and second algebraic Bianchi identities
\begin{align} 
&R(\nabla)(u, v)w + cycl. = (\nabla_u T(\nabla))(v, w) + T(\nabla)(T(\nabla)(u, v), w) \nonumber\\
& \qquad \qquad \qquad \qquad \quad \ + \nabla_{(1 - \mathcal{P}) L(e^a, \nabla_{X_a} u, v)} w + [[u, v]_E^{\nabla}, w]_E^{\nabla} + cycl., \nonumber\\
&(\nabla_u R(\nabla))(v, w) w' + cycl. = R(\nabla)(u, T(\nabla)(v, w)) w' + \nabla_u \nabla_{(1 - \mathcal{P}) L(e^a, \nabla_{X_a} u, v)} w' \nonumber\\
& \qquad \qquad \qquad \qquad \qquad \qquad - \nabla_{(1 - \mathcal{P}) L(e^a, \nabla_{X_a} u, v)} \nabla_u w' - \nabla_{(1 - \mathcal{P}) L(e^a, \nabla_{X_a} [v, w]_E^{\nabla}, u)} w' \nonumber\\
& \qquad \qquad \qquad \qquad \qquad \qquad + \nabla_{[[u, v]_E^{\nabla}, w]_E^{\nabla}}w' + cycl.,
\label{ec34}
\end{align}
where $cycl.$ means the addition of cyclic permutations in $u, v, w$.
\label{pc7}
\end{Proposition}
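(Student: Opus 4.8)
The plan is to reduce everything to the corresponding statement on the almost-Lie (in fact pre-Lie) algebroid $(E, \rho, [\cdot,\cdot]_E^{\hat{\nabla}})$ obtained from Proposition \ref{pc3}, and then translate back to the original data by expanding the projected modified bracket. The key point is the rewriting \eqref{ec25} of the $E$-curvature operator as $R(\nabla)(u,v)w = \nabla_u \nabla_v w - \nabla_v \nabla_u w - \nabla_{[u,v]_E^{\hat{\nabla}}} w$, together with the rewriting \eqref{ec15} of the $E$-torsion, $T(\nabla)(u,v) = \nabla_u v - \nabla_v u - [u,v]_E^{\nabla}$. Since $\nabla$ is admissible, both $[\cdot,\cdot]_E^{\nabla}$ and $[\cdot,\cdot]_E^{\hat{\nabla}}$ are anti-symmetric (Propositions \ref{pc2}, \ref{pc3}), so on the pre-Lie algebroid side the usual derivation that yields \eqref{eb7} goes through verbatim; the only new feature is that the curvature is built from $[\cdot,\cdot]_E^{\hat{\nabla}}$ while the torsion is built from $[\cdot,\cdot]_E^{\nabla}$, and these two differ precisely by $(1-\mathcal{P})L(e^a,\nabla_{X_a}\cdot,\cdot)$, cf.\ \eqref{ec14} and \eqref{ec24}. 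That mismatch is exactly what produces the extra $(1-\mathcal{P})L$ and $[[u,v]_E^{\nabla},w]_E^{\nabla}$ terms on the right-hand sides of \eqref{ec34}.

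For the first identity, I would start from $R(\nabla)(u,v)w + cycl.$, insert $\nabla_v w = T(\nabla)(v,w) + \nabla_w v + [v,w]_E^{\nabla}$ wherever a $\nabla_{(\cdot)}(\cdot)$ with both slots dynamical appears, and collect terms. After the cyclic sum, the ``pure connection'' terms $\nabla_u\nabla_v w - \nabla_u\nabla_w v$ and friends reorganize (using the Leibniz rule for $\nabla$ on $E$-tensors, which gives $(\nabla_u T(\nabla))(v,w) = \nabla_u(T(\nabla)(v,w)) - T(\nabla)(\nabla_u v, w) - T(\nabla)(v, \nabla_u w)$) into $(\nabla_u T(\nabla))(v,w) + T(\nabla)(T(\nabla)(u,v),w) + cycl.$ exactly as in the classical computation. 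The remaining terms involve $\nabla_{[u,v]_E^{\hat{\nabla}}} w$ coming from the definition of curvature and $\nabla_{[[u,v]_E^{\nabla},w]_E^{\nabla}}$ style terms coming from expanding the torsion term $\nabla_{[v,w]_E^{\nabla}}u$ against the remaining connection piece; writing $[u,v]_E^{\hat{\nabla}} = [u,v]_E^{\nabla} + (1-\mathcal{P})L(e^a,\nabla_{X_a}u,v)$ then splits this into the displayed $\nabla_{(1-\mathcal{P})L(e^a,\nabla_{X_a}u,v)}w$ and $[[u,v]_E^{\nabla},w]_E^{\nabla}$ contributions. (Here I would also need the Jacobi-type cyclic identity for $[\cdot,\cdot]_E^{\nabla}$, which is \emph{not} assumed to hold — the point is that its failure is precisely absorbed into the $[[u,v]_E^{\nabla},w]_E^{\nabla}+cycl.$ term, which is why that term survives rather than canceling.)

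For the second (differential-type) identity, I would compute $(\nabla_u R(\nabla))(v,w)w' + cycl.$ by expanding $\nabla_u R(\nabla)$ via the Leibniz rule and substituting \eqref{ec25} for $R(\nabla)$, then commuting $\nabla_u$ past the inner $\nabla$'s and collecting. The classical Jacobi-style cancellations (using anti-symmetry of $R(\nabla)$ from Corollary \ref{cc3} and of $[\cdot,\cdot]_E^{\hat{\nabla}}$) leave $R(\nabla)(u, T(\nabla)(v,w))w' + cycl.$ as the principal term, and the extra terms arise from (i) the $(1-\mathcal{P})L$-discrepancy between $[\cdot,\cdot]_E^{\nabla}$ and $[\cdot,\cdot]_E^{\hat{\nabla}}$ appearing inside $\nabla_u\nabla_{[v,w]}w' - \nabla_{[v,w]}\nabla_u w'$, and (ii) a term $\nabla_{(1-\mathcal{P})L(e^a,\nabla_{X_a}[v,w]_E^{\nabla},u)}w'$ coming from how the locality-projector correction itself transforms when its first argument is the bracket $[v,w]_E^{\nabla}$, plus (iii) the $\nabla_{[[u,v]_E^{\nabla},w]_E^{\nabla}}w'$ term from the non-Jacobi defect of the modified bracket. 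Throughout I would use $C^\infty(M,\mathbb{R})$-multilinearity of $L$ and $\mathcal{P}$ and the fact that $\mathcal{P}L \in [L]_\nabla$ (established in the proof of Proposition \ref{pc3}) to move $\nabla_{X_a}$ in and out of the $L$-slots cleanly.

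The main obstacle will be bookkeeping: matching the precise combination of $(1-\mathcal{P})L$ terms — in particular the one with argument $\nabla_{X_a}[v,w]_E^{\nabla}$ — against the right-hand side of \eqref{ec34}, since several superficially different groupings of the leftover terms are equal only after using $C^\infty$-linearity of $L$ and the admissibility relation \eqref{ec8}. I expect the cleanest route is to do the whole computation first in the special case $\mathcal{P} = \mathrm{id}$ (so $[\cdot,\cdot]_E^{\hat{\nabla}} = [\cdot,\cdot]_E^{\nabla}$ and all $(1-\mathcal{P})L$ terms vanish), recovering the pre-Lie-algebroid Bianchi identities as a sanity check, and then reinstate the general locality projector by tracking exactly where $[\cdot,\cdot]_E^{\hat{\nabla}}$ was used in place of $[\cdot,\cdot]_E^{\nabla}$.
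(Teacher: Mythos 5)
Your plan is essentially the paper's own: the paper in fact skips this proof and defers to Proposition \ref{pc10}, whose argument --- run the classical cyclic computation using the rewritings \eqref{ec15} and \eqref{ec25}, exploit the anti-symmetry of the modified and projected modified brackets guaranteed by admissibility, and track the $(1-\mathcal{P})L$ discrepancy between the two brackets --- is exactly what you outline. Your identification of the source of each extra term (the mismatch between $[\cdot,\cdot]_E^{\nabla}$ and $[\cdot,\cdot]_E^{\hat{\nabla}}$, and the non-Jacobi defect of the modified bracket) is correct, so what remains is only the sign bookkeeping you already flag.
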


\begin{proof} We will skip this proof, but we will prove similar Bianchi identities in the proposition (\ref{pc10}) whose proof can be applied to this proposition. We note that the admissibility part is crucial as it implies the modified and projected modified brackets are anti-symmetric. 
\end{proof}

\noindent Note that these identities reduce to the usual Bianchi identities (\ref{eb7}) for the tangent bundle, which is a Lie algebroid. As the tangent bundle is an almost-Lie algebroid, it is an almost-dull algebroid so that the locality operator is 0. Hence the terms which include $L$ in the equation (\ref{ec34}) vanish. Moreover, as the Lie bracket satisfies the Jacobi identity, the term $[[u, v], w] + cycl.$ is automatically 0. Hence, the only remaining terms are equal to the ones in the equations (\ref{eb7}). Moreover, both in double field theory \cite{5} and Courant algebroid \cite{22} literature, there are various versions of the algebraic Bianchi identities. These are often written down by using the isomorphism induced by the $E$-metric, and they include a different notion of $E$-curvature operator. Still, the necessary ingredient for these identities is the fact that the chosen connection is compatible with the $E$-metric which is either the $O(d, d)$-metric in double field theory or the one in the definition of a Courant algebroid. The admissibility condition of $\nabla$ explains why this $E$-metric-compatibility is necessary for the Bianchi identities.

\begin{Proposition} For any linear $E$-connection $\nabla$, not necessarily admissible, the following Ricci identity holds:
\begin{equation} \nabla^2_{u, v} w - \nabla^2_{v, u} w = R(\nabla)(u, v)w - \nabla_{T(\nabla)(u, v)} w + \nabla_{(1 - \mathcal{P}) L(e^a, \nabla_{X_a} u, v)} w,
\label{ec35}
\end{equation}
where the second order $E$-covariant derivative is defined in the same way as the equation (\ref{eb9}).
\label{pc8}
\end{Proposition}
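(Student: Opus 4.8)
The Ricci identity is a purely algebraic manipulation, so the plan is to expand everything in terms of the definitions and reorganize. First I would write out the left-hand side using the definition of the second-order $E$-covariant derivative (\ref{eb9}):
\begin{align}
\nabla^2_{u, v} w - \nabla^2_{v, u} w &= \nabla_u \nabla_v w - \nabla_{\nabla_u v} w - \nabla_v \nabla_u w + \nabla_{\nabla_v u} w \nonumber\\
&= \left( \nabla_u \nabla_v w - \nabla_v \nabla_u w \right) - \nabla_{\nabla_u v - \nabla_v u} w. \nonumber
\end{align}
The first parenthesis is recognized, via the definition (\ref{ec22}) of the $E$-curvature operator, as
\begin{equation}
\nabla_u \nabla_v w - \nabla_v \nabla_u w = R(\nabla)(u, v) w + \nabla_{[u, v]_E} w - \nabla_{\hat{L}(e^a, \nabla_{X_a} u, v)} w. \nonumber
\end{equation}

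Substituting this in, the claim reduces to showing that
\begin{equation}
- \nabla_{\nabla_u v - \nabla_v u} w + \nabla_{[u, v]_E} w - \nabla_{\hat{L}(e^a, \nabla_{X_a} u, v)} w = - \nabla_{T(\nabla)(u, v)} w + \nabla_{(1 - \mathcal{P}) L(e^a, \nabla_{X_a} u, v)} w. \nonumber
\end{equation}
By $\mathbb{R}$-linearity of $\nabla$ in its lower slot (which holds because $\nabla$ is $\mathbb{R}$-bilinear), it suffices to check that the vector-field arguments agree. Here I would use $\hat{L} = \mathcal{P} L$ and the observation that $L - \hat{L} = (1 - \mathcal{P}) L$, so that $- \hat{L}(e^a, \nabla_{X_a} u, v) + (1 - \mathcal{P})L(e^a, \nabla_{X_a} u, v) = - L(e^a, \nabla_{X_a} u, v) + 2(1-\mathcal P)L(\dots)$... more carefully, I would just plug in the definition (\ref{ec13}) of the $E$-torsion operator, $T(\nabla)(u, v) = \nabla_u v - \nabla_v u - [u, v]_E + L(e^a, \nabla_{X_a} u, v)$, so that
\begin{equation}
- \nabla_{T(\nabla)(u, v)} w = - \nabla_{\nabla_u v - \nabla_v u} w + \nabla_{[u, v]_E} w - \nabla_{L(e^a, \nabla_{X_a} u, v)} w, \nonumber
\end{equation}
again using $\mathbb{R}$-linearity in the lower slot. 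Comparing the two sides, the identity now collapses to the single relation $- \nabla_{\hat{L}(e^a, \nabla_{X_a} u, v)} w = - \nabla_{L(e^a, \nabla_{X_a} u, v)} w + \nabla_{(1 - \mathcal{P}) L(e^a, \nabla_{X_a} u, v)} w$, which is immediate from $L = \mathcal{P}L + (1-\mathcal P)L = \hat L + (1-\mathcal P)L$ together with linearity of $\nabla$.

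There is essentially no obstacle here: the statement is valid for \emph{any} linear $E$-connection precisely because we never invoke anti-symmetry of any bracket — everything is bookkeeping among the defining formulas (\ref{eb9}), (\ref{ec13}), (\ref{ec22}) and the decomposition $\mathrm{id} = \mathcal{P} + (1 - \mathcal{P})$ on $L$. The only point requiring a word of care is the repeated use of $\mathbb{R}$-linearity of $\nabla_{(-)} w$ in its subscript argument to split sums like $\nabla_{X + Y} w = \nabla_X w + \nabla_Y w$; this is legitimate since an $E$-connection is $\mathbb{R}$-bilinear by definition (\ref{ec2}), and in fact $C^\infty(M,\mathbb{R})$-linear in that slot. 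I would present the proof as the chain of substitutions above, ending with the cancellation, and remark that setting $\mathcal{P} = \mathrm{id}$ (so $L$ maps into $\ker\rho$, e.g.\ for higher-Courant algebroids) or $L = 0$ (almost-Lie case) recovers the more familiar forms of the Ricci identity.
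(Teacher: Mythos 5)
Your proof is correct and follows essentially the same route as the paper: expand $\nabla^2_{u,v}w-\nabla^2_{v,u}w$ via (\ref{eb9}), substitute the definitions (\ref{ec22}) and (\ref{ec13}) of $R(\nabla)$ and $T(\nabla)$, and cancel using $L=\hat{L}+(1-\mathcal{P})L$ together with additivity of $\nabla$ in its lower slot. The bookkeeping, including the final identity $-\nabla_{\hat{L}(e^a,\nabla_{X_a}u,v)}w=-\nabla_{L(e^a,\nabla_{X_a}u,v)}w+\nabla_{(1-\mathcal{P})L(e^a,\nabla_{X_a}u,v)}w$, checks out.
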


\begin{proof} We will prove this by direct computation and using the equation (\ref{ec23}):
\begin{align} 
\nabla^2_{u, v} w - \nabla^2_{v, u} w &= \left\{ \nabla_u \nabla_v w - \nabla_{\nabla_u v} w \right\} - \left\{ \nabla_v \nabla_u w - \nabla_{\nabla_v u} w \right\} \nonumber\\
&= \nabla_u \nabla_v w - \nabla_v \nabla_u w - \nabla_{\nabla_u v - \nabla_v u} w \nonumber\\
&= R(\nabla)(u, v) w + \nabla_{[u, v]_E} w + \nabla_{(1 - \mathcal{P})L(e^a, \nabla_{X_a} u, v)} w - \nabla_{T(\nabla)(u, v) + [u, v]_E} w \nonumber\\
&= R(\nabla)(u, v) w - \nabla_{T(\nabla)(u, v)} w + \nabla_{(1 - \mathcal{P})L(e^a, \nabla_{X_a} u, v)} w \nonumber
\end{align}
\end{proof}

\noindent Similar to the Bianchi identities, this generalized Ricci identity reduces to the usual one (\ref{eb8}) when the locality operator vanishes, which is the case for the tangent bundle. Moreover, when one deals with the $E$-torsion-free linear $E$-connections, one can consider 
\begin{equation}
R(\nabla)(u, v)w = \nabla^2_{u, v} w - \nabla^2_{v, u} w - \nabla_{(1 - \mathcal{P}) L(e^a, \nabla_{X_a} u, v)} w,
\label{ec36}
\end{equation}
as the definition of the $E$-curvature operator.

\noindent The anti-symmetry properties of $E$-torsion (\ref{ec20}) and $E$-curvature (\ref{ec28}) operators lead us to the following definitions, which are completely analogous to the usual case (\ref{eb16}).

\begin{Definition} For an admissible linear $E$-connection $\nabla$, its $E$-torsion and $E$-curvature 2-forms are defined by
\begin{align}
&T^a(\nabla)(u, v) := \langle e^a, T(\nabla)(u, v) \rangle, \nonumber\\
&R^a_{\ b}(\nabla)(u, v) := \langle e^a, R(\nabla)(u, v) X_b \rangle.
\label{ec37}
\end{align}
Similarly, $E$-non-metricity and $E$-connection 1-forms, which can be constructed for any linear $E$-connection, are defined by
\begin{align}
&Q_{a b}(\nabla, g)(v) := Q(\nabla, g)(v, X_a, X_b), \nonumber\\
&\omega^a_{\ b}(\nabla)(v) := \langle e^a, \nabla_v X_b \rangle.
\label{ec38}
\end{align}
\label{dc14}
\end{Definition}
These definitions naturally raise the question whether the definitions of $E$-torsion and $E$-curvature operators can be written in the Cartan form by using these $E$-torsion and $E$-curvature 2-forms, which are analogous to the equations (\ref{eb13}). The Cartan structure equations necessitates the exterior derivative that can be defined via the relation (\ref{eb1}), which includes the Lie bracket. If one changes the Lie bracket to $[\cdot,\cdot]_E$, then the definition does not give a map between $E$-tensors as it is not $C^{\infty}(M, \mathbb{E})$-multilinear. Similar to $E$-torsion and $E$-curvature operators, one can seek for a suitable tensorial modification:

\begin{align} 
d(\nabla) \Omega (v_1, \ldots, v_{p+1}) := & \sum_{1 \leq i \leq p+1} (-1)^{i+1} \rho(v_i) \left( \Omega(v_1, \ldots, \check{v}_i, \ldots, v_{p+1}) \right) \nonumber\\
& + \sum_{1 \leq i < j \leq p+1} (-1)^{i+j} \Omega \left( [v_i, v_j]_E^{\nabla}, v_1, \ldots, \check{v}_i, \ldots, \check{v}_j, \ldots, v_{p+1} \right).
\label{ec39}
\end{align}
For a general linear $E$-connection, this map does not yield an $E$-form as the right-hand side is not anti-symmetric. Yet, if the connection $\nabla$ is admissible, then it defines a map between $E$-forms. Unfortunately, this map does not square to 0 as for the usual exterior derivative. On the other hand, if one uses the projected modified bracket (\ref{ec24}), then one can still define a tensorial modification whose square is 0 when acting on smooth functions:

\begin{align} 
\hat{d}(\nabla) \Omega (v_1, \ldots, v_{p+1}) := & \sum_{1 \leq i \leq p+1} (-1)^{i+1} \rho(v_i) \left( \Omega(v_1, \ldots, \check{v}_i, \ldots, v_{p+1}) \right) \nonumber\\
& + \sum_{1 \leq i < j \leq p+1} (-1)^{i+j} \Omega \left( [v_i, v_j]_E^{\hat{\nabla}}, v_1, \ldots, \check{v}_i, \ldots, \check{v}_j, \ldots, v_{p+1} \right).
\label{ec40}
\end{align}
\noindent As $(E, \rho, [\cdot,\cdot]_E^{\hat{\nabla}})$ is a pre-Lie algebroid,
\begin{equation} \hat{d}(\nabla)^2 f = 0,
\label{ec41}
\end{equation}
for all $f \in C^{\infty}(M, \mathbb{R})$. As the bracket $[\cdot,\cdot]_E^{\hat{\nabla}}$ fails to satisfy the Jacobi identity,
\begin{equation} \hat{d}(\nabla)^2 \Omega = \Omega \left( Assoc \left( [\cdot,\cdot]_E^{\hat{\nabla}} \right) \right),
\label{ec42}
\end{equation}
where the associator of any bracket $[\cdot,\cdot]'$ on $E$ is defined by
\begin{equation} Assoc \left( [\cdot,\cdot]' \right) (u, v, w) :=  [u, [v, w]']' - [[u, v]', w]' - [v, [u, w]']',
\label{ec43}
\end{equation}
for all $u, v, w \in \mathfrak{X}(E)$, which is not equal to 0 in general \cite{23}.

An almost-Leibniz algebroid with a bracket whose associator is 0, i.e. it satisfies the Leibniz identity is called a Leibniz algebroid. Any Leibniz algebroid is automatically a pre-Leibniz algebroid, as the Leibniz identity implies the condition (\ref{ec21}). For example, Courant algebroids, higher-Courant algebroids are, by definition, Leibniz algebroids. On the other hand, a Leibniz algebroid with an anti-symmetric bracket is a Lie algebroid. Almost every construction on the tangent bundle can be easily elevated to an arbitrary Lie algebroid level \cite{1}, \cite{24}. Here, as the associator of the projected modified bracket $[\cdot,\cdot]_E^{\hat{\nabla}}$ is not 0, one can not have a genuine $E$-exterior derivative whose square is always 0, so that one cannot define a cohomology on $E$-forms. Yet, characteristic classes on pre-Lie algebroids are studied \cite{23}. Moreover, the ``naive cohomology'' on Courant algebroids are investigated by using the Courant bracket instead of the Dorfman bracket and restricting to $E$-forms on the subbundle $ker(\rho)$ \cite{25}, \cite{26}.

\begin{Definition} Let $\nabla$ be an admissible linear $E$-connection. Then, the map $d(\nabla): \Omega^p(E) \to \Omega^{p+1}(E)$ defined in the equation (\ref{ec39}) will be called the $E$-exterior derivative, and the map $\hat{d}(\nabla): \Omega^p(E) \to \Omega^{p+1}(E)$ defined in the equation (\ref{ec40}) will be called the projected $E$-exterior derivative.
\label{dc15}
\end{Definition}
\noindent Note that their actions on the smooth functions agree

\begin{equation} d(\nabla)f (v) = \hat{d}(\nabla)f (v) = Df (v) = \rho(v)(f),
\label{ec44}
\end{equation}
for all $f \in C^{\infty}(M, \mathbb{R}), v \in \mathfrak{X}(E)$. Morover, for pre-Lie algebroids, and in particular for the tangent bundle, both of these maps reduce to the usual exterior derivative $d$ as $L = 0$. Both $E$-exterior derivative and projected $E$-exterior derivative are degree-1 graded derivations, i. e. they satisfy
\begin{align} 
&d(\nabla)(\Omega_1 \wedge \Omega_2) = d(\nabla) \Omega_1 \wedge \Omega_2 + (-1)^{p_1} \Omega_1 \wedge d(\nabla) \Omega_2, \nonumber\\
&\hat{d}(\nabla)(\Omega_1 \wedge \Omega_2) = \hat{d}(\nabla) \Omega_1 \wedge \Omega_2 + (-1)^{p_1} \Omega_1 \wedge \hat{d}(\nabla) \Omega_2
\label{ec45} 
\end{align}
for all $\Omega_i \in \Omega^{p_i}(E)$. Moreover, by using the maps $d(\nabla)$ and $\hat{d}(\nabla)$, one can write down the Cartan structure equations:

\begin{Proposition} For an admissible linear $E$-connection $\nabla$, the following forms of Cartan first and second structure equations hold:
\begin{align} 
&T^a(\nabla) = d(\nabla) e^a + \omega^a_{\ b}(\nabla) \wedge e^b, \nonumber\\
&R^a_{\ b}(\nabla) = \hat{d}(\nabla) \omega^a_{\ b}(\nabla) + \omega^a_{\ c}(\nabla) \wedge \omega^c_{\ b}(\nabla).
\label{ec46}
\end{align}
\label{pc9}
\end{Proposition}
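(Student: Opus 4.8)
The plan is to prove both identities by evaluating each side on an arbitrary pair $u,v\in\mathfrak{X}(E)$. Because $\nabla$ is admissible, Propositions~\ref{pc2} and~\ref{pc3} guarantee that the modified bracket $[\cdot,\cdot]_E^{\nabla}$ and the projected modified bracket $[\cdot,\cdot]_E^{\hat{\nabla}}$ are anti-symmetric, so the maps $d(\nabla)$ and $\hat{d}(\nabla)$ of (\ref{ec39}) and (\ref{ec40}) genuinely take $E$-forms to $E$-forms and the $E$-torsion and $E$-curvature $2$-forms of (\ref{ec37}) make sense. All four expressions appearing in (\ref{ec46}) are then $C^{\infty}(M,\mathbb{R})$-multilinear and anti-symmetric, so it suffices to check the equalities as identities of $E$-$2$-forms; equivalently one may evaluate on frame pairs $(X_b,X_c)$ and compare with the component formulas (\ref{ec17}) and (\ref{ec27}).

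For the first equation I would expand the right-hand side on $(u,v)$, writing $u=u^{b}X_{b}$, $v=v^{b}X_{b}$. Definition (\ref{ec39}) gives $\big(d(\nabla)e^{a}\big)(u,v)=\rho(u)(v^{a})-\rho(v)(u^{a})-\langle e^{a},[u,v]_E^{\nabla}\rangle$, while $\big(\omega^{a}_{\ b}(\nabla)\wedge e^{b}\big)(u,v)=v^{b}\langle e^{a},\nabla_{u}X_{b}\rangle-u^{b}\langle e^{a},\nabla_{v}X_{b}\rangle$. The Leibniz rule (\ref{ec2}) gives $\langle e^{a},\nabla_{u}v\rangle=\rho(u)(v^{a})+v^{b}\langle e^{a},\nabla_{u}X_{b}\rangle$ and the analogue with $u\leftrightarrow v$; substituting, the sum telescopes to $\langle e^{a},\nabla_{u}v-\nabla_{v}u-[u,v]_E^{\nabla}\rangle$, which equals $T^{a}(\nabla)(u,v)$ by (\ref{ec15}) and the definition of the $E$-torsion $2$-form.

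For the second equation I would proceed the same way, now decomposing $\nabla_{v}X_{b}=\omega^{c}_{\ b}(\nabla)(v)\,X_{c}$ and using (\ref{ec2}) to get $\langle e^{a},\nabla_{u}\nabla_{v}X_{b}\rangle=\rho(u)\big(\omega^{a}_{\ b}(\nabla)(v)\big)+\omega^{a}_{\ c}(\nabla)(u)\,\omega^{c}_{\ b}(\nabla)(v)$, and likewise with $u\leftrightarrow v$. Feeding the $1$-form $\omega^{a}_{\ b}(\nabla)$ into (\ref{ec40}) and expanding $\omega^{a}_{\ c}(\nabla)\wedge\omega^{c}_{\ b}(\nabla)$, the $\rho$-derivative terms cancel against the first pieces above, the quadratic connection terms match the wedge, and the bracket term of $\hat{d}(\nabla)$ supplies $-\langle e^{a},\nabla_{[u,v]_E^{\hat{\nabla}}}X_{b}\rangle$; the total is $\langle e^{a},\nabla_{u}\nabla_{v}X_{b}-\nabla_{v}\nabla_{u}X_{b}-\nabla_{[u,v]_E^{\hat{\nabla}}}X_{b}\rangle$, which is $R^{a}_{\ b}(\nabla)(u,v)$ by (\ref{ec25}).

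I do not expect a real obstacle here; the computation is essentially bookkeeping. The one thing that must be kept straight is the matching of brackets: the $E$-torsion operator (\ref{ec15}) is built from $[\cdot,\cdot]_E^{\nabla}$, so the first structure equation must use $d(\nabla)$, whereas the $E$-curvature operator (\ref{ec25}) is built from $[\cdot,\cdot]_E^{\hat{\nabla}}$, so the second must use $\hat{d}(\nabla)$; replacing either modified bracket by the original bracket (or by the other modification) would leave residual $L$-dependent terms and break the identity. It is worth stating explicitly, as with the discussion after (\ref{ec39}), that everything rests on admissibility: the anti-symmetry of the two modified brackets is what makes the left-hand sides honest $E$-$2$-forms and what produces the cancellations above.
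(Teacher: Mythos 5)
Your proposal is correct and follows essentially the same route as the paper: both proofs rest on the single Leibniz-rule identity $e^a(\nabla_u v)=\rho(u)(e^a(v))+e^a(\nabla_u X_b)\,e^b(v)$ (applied once with $v$ itself for the first equation and once with $v$ replaced by $\nabla_v X_b$ for the second), after which the right-hand sides telescope to $e^a\bigl(\nabla_u v-\nabla_v u-[u,v]_E^{\nabla}\bigr)$ and $e^a\bigl(\nabla_u\nabla_v X_b-\nabla_v\nabla_u X_b-\nabla_{[u,v]_E^{\hat{\nabla}}}X_b\bigr)$ respectively. Your remarks on matching $d(\nabla)$ with the modified bracket and $\hat{d}(\nabla)$ with the projected modified bracket, and on admissibility being what makes the $2$-forms well defined, accurately reflect the paper's surrounding discussion.
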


\begin{proof} We will prove the first Cartan structure equation by observing
\begin{align} 
e^a(\nabla_u v) &= e^a(\nabla_u (e^b(v) X_b)) \nonumber\\
&= e^a \left( \rho(u)(e^b(v)) X_b + e^b(v) \nabla_u X_b \right) \nonumber\\
&= \rho(u)(e^b(v)) e^a(X_b) + e^b(v) e^a(\nabla_u X_b) \nonumber\\
&= \rho(u)(e^a(v)) + e^a(\nabla_u X_b) e^b(v). \nonumber
\end{align}
The right-hand side of the first Cartan structure equation reads
\begin{align} 
d(\nabla) e^a(u, v) + (\omega(\nabla)^a_{\ b} \wedge e^b)(u, v) &= \rho(u)(e^a(v)) - \rho(v)(e^a(u)) - e^a \left( [u, v]_E^{\nabla} \right)  \nonumber\\
& \quad \ \omega(\nabla)^a_{\ b}(u) e^b(v) - \omega(\nabla)^a_{\ b}(v) e^b(u) \nonumber\\
&= \rho(u)(e^a(v)) - \rho(v)(e^a(u)) - e^a \left( [u, v]_E^{\nabla} \right)  \nonumber\\
& \quad \ e^a(\nabla_u X_b) e^b(v) - e^a(\nabla_v X_b) e^b(u). \nonumber
\end{align}
With the above observation, this becomes
\begin{align} 
d(\nabla) e^a(u, v) + (\omega(\nabla)^a_{\ b} \wedge e^b)(u, v) &= e^a(\nabla_u v) - e^a(\nabla_v u) - e^a \left( [u, v]_E^{\nabla} \right)  \nonumber\\
&= e^a(T(u, v)) = T^a(\nabla)(u, v). \nonumber
\end{align}
Hence, we proved the first Cartan structure equation. For the second one, we replace $v$ by $\nabla_v X_b$ in the above observation so that the right-hand side can be written as
\begin{align} 
\hat{d}(\nabla) \omega^a_{\ b} (u, v) + (\omega(\nabla)^a_{\ b} \wedge \omega(\nabla)^c_{\ b})(u, v) &= \rho(u)(e^a(\nabla_v X_b)) - \rho(v)(e^a(\nabla_u X_b)) \nonumber\\
& \quad \ - e^a \left( \nabla_{[u, v]_E^{\hat{\nabla}}} X_b \right) +  e^a(\nabla_u X_c) e^c(\nabla_v X_b) \nonumber\\
& \quad \ - e^a(\nabla_v X_c) e^c(\nabla_u X_b) \nonumber\\
&= e^a(\nabla_u \nabla_v X_b) - e^a(\nabla_v \nabla_u X_b) - e^a \left( \nabla_{[u, v]_E^{\hat{\nabla}}} X_b \right) \nonumber\\
&= e^a(R(\nabla)(u, v) X_b) = R^a_{\ b}(\nabla)(u, v). \nonumber
\end{align}
Hence, we also proved the second Cartan structure equation.
\end{proof}

\noindent It seems odd to write down the Cartan structure equations by using two different notions of $E$-exterior derivatives. This is due to the fact that the $E$-torsion and $E$-curvature operators are defined by using different modifications of the bracket. Yet, the only key point in these definitions were to construct tensorial quantities, which is the main aim of the constructions of these operators also in the literature. Many authors defined seemingly unnatural, ad hoc operators in order to achieve tensoriality. By keeping in mind the two-fold Cartan structure equations, we offer another $E$-torsion operator:

\begin{Definition} The projected $E$-torsion operator of a linear $E$-connection $\nabla$ is defined as
\begin{equation} \hat{T}(\nabla)(u, v) := \nabla_u v - \nabla_v u - [u, v]_E^{\hat{\nabla}},
\label{ec47}
\end{equation}
for all $u, v \in \mathfrak{X}(E)$.
\label{dc16}
\end{Definition}
As the projected modified bracket $[\cdot,\cdot]_E^{\hat{\nabla}}$ is an almost-dull bracket, the projected $E$-torsion operator is $C^{\infty}(M, \mathbb{R})$-bilinear. If $\nabla$ is admissible, the projected modified bracket is an almost-Lie bracket so that the projected $E$-torsion operator is anti-symmetric. Therefore, in this case one can define the projected $E$-torsion 2-forms:

\begin{equation} \hat{T}^a(\nabla)(u, v) := \langle e^a, \hat{T}(\nabla)(u, v) \rangle.
\label{ec48}
\end{equation}
\noindent In terms of these projected $E$-torsion 2-forms, the first Cartan structure equation becomes

\begin{equation} \hat{T}(\nabla) = \hat{d}(\nabla) e^a + \omega^a_{\ b}(\nabla) \wedge e^b.
\label{ec49}
\end{equation}
\noindent Moreover, the algebraic Bianchi identities (\ref{ec34}) take a more familiar form as the usual ones (\ref{eb7}):

\begin{Proposition} For an admissible linear $E$-connection $\nabla$, the following algebraic Bianchi identities hold:
\begin{align} 
&R(\nabla)(u, v) w + cycl. = (\nabla_u \hat{T}(\nabla))(v, w) + \hat{T}(\nabla)(\hat{T}(\nabla)(u, v), w)
+ [[u, v]_E^{\hat{\nabla}}, w]_E^{\hat{\nabla}} + cycl., \nonumber\\
&(\nabla_u R(\nabla))(v, w) w' + cycl. = R(\nabla)(u, \hat{T}(\nabla)(v, w)) w' + \nabla_{[[u, v]_E^{\hat{\nabla}}, w]_E^{\hat{\nabla}}} w' + cycl.,
\label{ec50}
\end{align}
for all $u, v, w, w' \in \mathfrak{X}(E)$.
\label{pc10}
\end{Proposition}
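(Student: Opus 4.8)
The key observation driving the whole argument is that, once $\nabla$ is admissible, Proposition \ref{pc3} makes $(E, \rho, [\cdot,\cdot]_E^{\hat{\nabla}})$ a pre-Lie algebroid: the projected modified bracket is anti-symmetric and carries a vanishing locality operator. Moreover, by (\ref{ec25}) and Definition \ref{dc16}, the operators $R(\nabla)$ and $\hat{T}(\nabla)$ are precisely the \emph{unmodified} curvature and torsion of this pre-Lie algebroid, with $\nabla_a b - \nabla_b a = \hat{T}(\nabla)(a,b) + [a,b]_E^{\hat{\nabla}}$ and $R(\nabla)(u,v)w = \nabla_u\nabla_v w - \nabla_v\nabla_u w - \nabla_{[u,v]_E^{\hat{\nabla}}}w$. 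Consequently the plan is to run the classical derivation of the algebraic Bianchi identities for a connection with torsion, verbatim, on $(E, \rho, [\cdot,\cdot]_E^{\hat{\nabla}})$; the only step where the classical argument breaks is the use of the Jacobi identity $[u,[v,w]]+cycl.=0$, and keeping that combination produces the Jacobiator term displayed in (\ref{ec50}). Since $[\cdot,\cdot]_E^{\hat{\nabla}}$ has vanishing locality operator, no $L$-dependent corrections can survive, which is why (\ref{ec50}) looks formally like the classical identities (\ref{eb7}).

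For the first identity I would start from $R(\nabla)(u,v)w + cycl.$ written via (\ref{ec25}) and regroup the second-order terms by outermost connection, turning $\nabla_u\nabla_v w - \nabla_v\nabla_u w + cycl.$ into $\nabla_u(\nabla_v w - \nabla_w v) + cycl. = \nabla_u\big(\hat{T}(\nabla)(v,w)+[v,w]_E^{\hat{\nabla}}\big) + cycl.$ Pairing $\nabla_u[v,w]_E^{\hat{\nabla}}$ against $-\nabla_{[v,w]_E^{\hat{\nabla}}}u$ and applying $\nabla_a b - \nabla_b a = \hat{T}(\nabla)(a,b)+[a,b]_E^{\hat{\nabla}}$ once more yields $\hat{T}(\nabla)(u,[v,w]_E^{\hat{\nabla}}) + [u,[v,w]_E^{\hat{\nabla}}]_E^{\hat{\nabla}} + cycl.$ Next I would expand $\nabla_u\big(\hat{T}(\nabla)(v,w)\big)$ by the connection's Leibniz rule on the $(1,2)$-tensor $\hat{T}(\nabla)$, and substitute $[v,w]_E^{\hat{\nabla}} = \nabla_v w - \nabla_w v - \hat{T}(\nabla)(v,w)$ inside $\hat{T}(\nabla)(u,[v,w]_E^{\hat{\nabla}})$. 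All terms carrying a single $\nabla$ on an argument of $\hat{T}(\nabla)$ then cancel under the cyclic sum by the anti-symmetry of $\hat{T}(\nabla)$ (which follows from that of $[\cdot,\cdot]_E^{\hat{\nabla}}$, Proposition \ref{pc3}), and $-\hat{T}(\nabla)(u,\hat{T}(\nabla)(v,w)) + cycl. = \hat{T}(\nabla)(\hat{T}(\nabla)(u,v),w) + cycl.$ by the same anti-symmetry; what remains is exactly the first line of (\ref{ec50}), the surviving bracket-of-bracket sum being the Jacobiator of $[\cdot,\cdot]_E^{\hat{\nabla}}$.

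For the second identity I would write $(\nabla_u R(\nabla))(v,w)w' = [\nabla_u, R(\nabla)(v,w)]w' - R(\nabla)(\nabla_u v, w)w' - R(\nabla)(v, \nabla_u w)w'$ (Leibniz rule on the curvature tensor, with the bracket denoting the commutator of $\mathbb{R}$-linear operators on $\mathfrak{X}(E)$) and then expand $[\nabla_u, R(\nabla)(v,w)] = [\nabla_u,[\nabla_v,\nabla_w]] - [\nabla_u, \nabla_{[v,w]_E^{\hat{\nabla}}}]$ using (\ref{ec25}). Under the cyclic sum $[\nabla_u,[\nabla_v,\nabla_w]] + cycl. = 0$ by the Jacobi identity for operator commutators, while $[\nabla_u, \nabla_{[v,w]_E^{\hat{\nabla}}}]w' = R(\nabla)(u,[v,w]_E^{\hat{\nabla}})w' + \nabla_{[u,[v,w]_E^{\hat{\nabla}}]_E^{\hat{\nabla}}}w'$ is just (\ref{ec25}) rearranged. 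Substituting $[v,w]_E^{\hat{\nabla}} = \nabla_v w - \nabla_w v - \hat{T}(\nabla)(v,w)$ into $R(\nabla)(u,[v,w]_E^{\hat{\nabla}})$, the single-$\nabla$ curvature terms together with $-R(\nabla)(\nabla_u v, w)w'$ and $-R(\nabla)(v,\nabla_u w)w'$ cancel under the cyclic sum by the anti-symmetry of $R(\nabla)$ (Corollary \ref{cc3}), leaving precisely $R(\nabla)(u,\hat{T}(\nabla)(v,w))w' + cycl.$ plus the Jacobiator term $\nabla_{[[u,v]_E^{\hat{\nabla}},w]_E^{\hat{\nabla}}}w' + cycl.$, which is the second line of (\ref{ec50}).

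The main obstacle, and the only genuinely delicate point, is the bookkeeping of the cyclic sums: one must verify that every ``mixed'' contribution — a single $\nabla$ hitting an argument of $\hat{T}(\nabla)$ or $R(\nabla)$ — cancels after cyclic summation, and this cancellation rests entirely on the anti-symmetry of $\hat{T}(\nabla)$, $R(\nabla)$ and $[\cdot,\cdot]_E^{\hat{\nabla}}$, hence ultimately on admissibility. The same scheme, run with the modified bracket $[\cdot,\cdot]_E^{\nabla}$ of (\ref{ec14}) and the operators $T(\nabla)$ and (\ref{ec23}) in place of their projected counterparts, proves Proposition \ref{pc7}; there the extra $(1-\mathcal{P})L$-terms appear because, unlike $\hat{T}(\nabla)$ and the right-hand side of (\ref{ec25}), the operator $R(\nabla)$ is not the unmodified curvature of $(E, \rho, [\cdot,\cdot]_E^{\nabla})$.
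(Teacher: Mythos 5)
Your proposal is correct and follows essentially the same route as the paper: both proofs exploit that for an admissible $\nabla$ the pair $\bigl(\hat{T}(\nabla), R(\nabla)\bigr)$ is the unmodified torsion and curvature of the pre-Lie algebroid $(E, \rho, [\cdot,\cdot]_E^{\hat{\nabla}})$, run the classical Bianchi computation there, and retain the Jacobiator of $[\cdot,\cdot]_E^{\hat{\nabla}}$ as the only anomalous term, with all mixed single-$\nabla$ contributions cancelling by the anti-symmetry of $\hat{T}(\nabla)$, $R(\nabla)$ and the bracket. The only cosmetic difference is that you package the second identity's cancellations via the operator-commutator Jacobi identity $[\nabla_u,[\nabla_v,\nabla_w]] + cycl. = 0$, whereas the paper performs the equivalent term-by-term eliminations explicitly.
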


\begin{proof} We start with the definition of the $E$-curvature operator of the form (\ref{ec25})
\begin{align} 
R(\nabla)(u, v) w + cycl. &= \nabla_u \nabla_v w - \nabla_v \nabla_u w - \nabla_{[u, v]_E^{\hat{\nabla}}} w \nonumber\\
& \quad + \nabla_v \nabla_w u - \nabla_w \nabla_v u - \nabla_{[v, w]_E^{\hat{\nabla}}} u \nonumber\\
& \quad + \nabla_w \nabla_u v - \nabla_u \nabla_w v - \nabla_{[w, u]_E^{\hat{\nabla}}} v \nonumber\\
&= \nabla_u \left( \nabla_v w - \nabla_w v \right) + \nabla_v \left( \nabla_w u - \nabla_u w \right) + \nabla_w \left( \nabla_u v - \nabla_v u \right) \nonumber\\
& \quad - \nabla_{[u, v]_E^{\hat{\nabla}}} w - \nabla_{[v, w]_E^{\hat{\nabla}}} u - \nabla_{[w, u]_E^{\hat{\nabla}}} v \nonumber\\
&= \nabla_u \left( \hat{T}(\nabla)(v, w) + [v, w]_E^{\hat{\nabla}} \right) + \nabla_v \left( \hat{T}(\nabla)(w, u) + [w, u]_E^{\hat{\nabla}} \right) \nonumber\\
& \quad + \nabla_w \left( \hat{T}(\nabla)(u, v) + [u, v]_E^{\hat{\nabla}} \right) - \nabla_{[u, v]_E^{\hat{\nabla}}} w - \nabla_{[v, w]_E^{\hat{\nabla}}} u - \nabla_{[w, u]_E^{\hat{\nabla}}} v \nonumber
\end{align}
Now we observe that 
\begin{align} 
&\nabla_w [u, v]_E^{\hat{\nabla}} - \nabla_{[u, v]_E^{\hat{\nabla}}} w = \hat{T}(\nabla)(w, [u, v]_E^{\hat{\nabla}}) + [w, [u, v]_E^{\hat{\nabla}}]_E^{\hat{\nabla}} \nonumber\\
&(\nabla_u \hat{T}(\nabla))(v, w) = \nabla_u (\hat{T}(\nabla)(v, w)) - \hat{T}(\nabla)(\nabla_u v, w) - \hat{T}(\nabla)(v, \nabla_u w), \nonumber
\end{align}
which implies 
\begin{align}
R(\nabla)(u, v) w + cycl. &= (\nabla_u \hat{T}(\nabla))(v, w) + \hat{T}(\nabla)(\nabla_u v, w) + \hat{T}(\nabla)(v, \nabla_u w) \nonumber\\
& \quad + (\nabla_v \hat{T}(\nabla))(w, u) + \hat{T}(\nabla)(\nabla_v w, u) + \hat{T}(\nabla)(w, \nabla_v u) \nonumber\\
& \quad + (\nabla_w \hat{T}(\nabla))(u, v) + \hat{T}(\nabla)(\nabla_w u, v) + \hat{T}(\nabla)(u, \nabla_w v) \nonumber\\
& \quad + \hat{T}(\nabla)(u, [v, w]_E^{\hat{\nabla}}) + \hat{T}(\nabla)(v, [w, u]_E^{\hat{\nabla}}) + \hat{T}(\nabla)(w, [u, v]_E^{\hat{\nabla}}) \nonumber\\
& \quad + [u, [v, w]_E^{\hat{\nabla}}]_E^{\hat{\nabla}} + [v, [w, u]_E^{\hat{\nabla}}]_E^{\hat{\nabla}} + [w, [u, v]_E^{\hat{\nabla}}]_E^{\hat{\nabla}}. \nonumber
\end{align}
By using the fact that the projected $E$-torsion operator is anti-symmetric for an admissible linear $E$-connection, we get
\begin{equation}
R(\nabla)(u, v) w + cycl. = (\nabla_u \hat{T}(\nabla))(v, w) + \hat{T}(\nabla)(\nabla_u v - \nabla_v u - [u, v]_E^{\hat{\nabla}}, w) + [u, [v, w]_E^{\hat{\nabla}}]_E^{\hat{\nabla}} + cycl., \nonumber
\end{equation}
which is the desired first Bianchi identity. For the second identity, we start with

\begin{align}
(\nabla_u R(\nabla))(v, w) w' &= \nabla_u (R(\nabla)(v, w) w') - R(\nabla)(\nabla_u v, w) w' - R(\nabla)(v, \nabla_u w) w' - R(\nabla)(v, w) \nabla_u w' \nonumber\\
&= \nabla_u \left( \nabla_v \nabla_w w' - \nabla_w \nabla_v w' - \nabla_{[v, w]_E^{\hat{\nabla}}} w' \right) \nonumber\\
& \quad - \ R(\nabla)(\nabla_u v, w) w' - R(\nabla)(v, \nabla_u w) w' - R(\nabla)(v, w) \nabla_u w' \nonumber\\
&= \nabla_u \nabla_v \nabla_w w' - \nabla_u \nabla_w \nabla_v w' - \nabla_u \nabla_{[v, w]_E^{\hat{\nabla}}} w' \nonumber\\
& \quad \ - R(\nabla)(\nabla_u v, w) w' - R(\nabla)(v, \nabla_u w) w' - R(\nabla)(v, w) \nabla_u w' \nonumber
\end{align}
Next, we observe
\begin{align} 
&R(\nabla)([v, w]_E^{\hat{\nabla}}, u) w' = \nabla_{[v, w]_E^{\hat{\nabla}}} \nabla_u w' - \nabla_u \nabla_{v, w]_E^{\hat{\nabla}}} w' - \nabla_{[[v, w]_E^{\hat{\nabla}}, u]_E^{\hat{\nabla}}} w', \nonumber\\
&R(\nabla)(v, w) \nabla_u w' = \nabla_v \nabla_w \nabla_u w' - \nabla_w \nabla_v \nabla_u - \nabla_{[v, w]_E^{\hat{\nabla}}} \nabla_u w', \nonumber
\end{align}
which implies 
\begin{align}
&\nabla_u \nabla_{[v, w]_E^{\hat{\nabla}}} w' = - R(\nabla)([v, w]_E^{\hat{\nabla}}, u) w' + \nabla_{[v, w]_E^{\hat{\nabla}}} \nabla_u w' - \nabla_{[[v, w]_E^{\hat{\nabla}}, u]_E^{\hat{\nabla}}} w', \nonumber\\
&\nabla_v \nabla_w \nabla_u w' - \nabla_w \nabla_v \nabla_u w' = R(\nabla)(v, w) \nabla_u w' + \nabla_{[v, w]_E^{\hat{\nabla}}} \nabla_u w'. \nonumber
\end{align}
Inserting these in the cyclic combination and canceling out, we get
\begin{align} 
(\nabla_u R(\nabla))(v, w) w' + cycl. &= R(\nabla)([v, w]_E^{\hat{\nabla}}, u) w' + \nabla_{[[v, w]_E^{\hat{\nabla}}, u]_E^{\hat{\nabla}}} \nonumber\\
& \quad \ + R(\nabla)([w, u]_E^{\hat{\nabla}}, v) w' + \nabla_{[[w, u]_E^{\hat{\nabla}}, v]_E^{\hat{\nabla}}} w' \nonumber\\
& \quad \ + R(\nabla)([u, v]_E^{\hat{\nabla}}, w) w' + \nabla_{[[u, v]_E^{\hat{\nabla}}, w]_E^{\hat{\nabla}}} w' \nonumber\\
& \quad \ - R(\nabla)(\nabla_u v, w) w' - R(\nabla)(w, \nabla_v u) w' \nonumber\\
& \quad \ - R(\nabla)(\nabla_v w, u) w' - R(\nabla)(u, \nabla_w v) w' \nonumber\\
& \quad \ - R(\nabla)(\nabla_w u, v) w' - R(\nabla)(v, \nabla_u w) w'. \nonumber
\end{align}
Now, as the $E$-curvature operator is anti-symmetric for an admissible linear $E$-connection, we get
\begin{align} 
(\nabla_u R(\nabla))(v, w) w' + cycl. &=  R(\nabla)(u, \nabla_v w) w' - R(\nabla)(u, \nabla_w v) w' - R(\nabla)(u, [v, w]_E^{\hat{\nabla}}) w' \nonumber\\
& \quad \ + R(\nabla)(v, \nabla_w u) w' - R(\nabla)(v, \nabla_u w) w' - R(\nabla)(v, [w, u]_E^{\hat{\nabla}}) w' \nonumber\\
& \quad \ + R(\nabla)(w, \nabla_u v) w' - R(\nabla)(w, \nabla_v u) w' - R(\nabla)(w, [u, v]_E^{\hat{\nabla}}) w' \nonumber\\
& \quad \ + \nabla_{[[u, v]_E^{\hat{\nabla}}, w]_E^{\hat{\nabla}}} w' + \nabla_{[[v, w]_E^{\hat{\nabla}}, u]_E^{\hat{\nabla}}} + \nabla_{[[w, u]_E^{\hat{\nabla}}, v]_E^{\hat{\nabla}}} w' \nonumber\\
&= R(\nabla)(u, \hat{T}(\nabla)(v, w)) w' + \nabla_{[[u, v]_E^{\hat{\nabla}}, w]_E^{\hat{\nabla}}} w' + cycl., \nonumber	
\end{align}
which is the desired second Bianchi identity.
\end{proof}

\noindent These are identical to the usual Bianchi identities except for the associator term. Similarly, the Ricci identity (\ref{ec35}) becomes
\begin{equation} \nabla^2_{u, v} w - \nabla^2_{v, u} w = R(\nabla)(u, v) w - \nabla_{\hat{T}(\nabla)(u, v)} w.
\label{ec51}
\end{equation}
\noindent Hence, in the absence of projected $E$-torsion, the $E$-curvature operator can be defined as the difference of second order covariant derivatives. As this form of the identities corresponds to Bianchi identities of pre-Lie algebroids, the first Bianchi identity together with the second Cartan equation was anticipated in \cite{23}. In terms of the projected $E$-torsion, one can also write down the differential Bianchi identities.

\begin{Proposition} For an admissible linear $E$-connection $\nabla$ the following differential Bianchi identities hold:
\begin{align} 
&\hat{d}(\nabla) \hat{T}^a(\nabla) + \omega^a_{\ b}(\nabla) \wedge \hat{T}^b(\nabla) = R^a_{\ b}(\nabla) \wedge e^b + \hat{d}(\nabla)^2 e^a, \nonumber\\
&\hat{d}(\nabla) R^a_{\ b}(\nabla) + \omega^a_{\ c}(\nabla) \wedge R^c_{\ b}(\nabla) = R^a_{\ c}(\nabla) \wedge \omega^c_{\ b}(\nabla) + \hat{d}(\nabla)^2 \omega^a_{\ b}(\nabla).
\label{ec52} 
\end{align}
\label{pc11}
\end{Proposition}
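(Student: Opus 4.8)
The plan is to derive both identities by applying the projected $E$-exterior derivative $\hat{d}(\nabla)$ to the two Cartan structure equations of Proposition~\ref{pc9}, using the torsion equation in the form (\ref{ec49}) and the curvature equation from (\ref{ec46}), and then simplifying with the graded Leibniz rule (\ref{ec45}). Since $\nabla$ is admissible, $(E,\rho,[\cdot,\cdot]_E^{\hat{\nabla}})$ is a pre-Lie algebroid, so $\hat{d}(\nabla)$ is a well-defined degree-$1$ graded derivation on $\Omega^{\bullet}(E)$ and all these manipulations are legitimate. The only structural difference from the classical situation is that $\hat{d}(\nabla)^2$ need not vanish on $E$-forms of positive degree, which is exactly why the terms $\hat{d}(\nabla)^2 e^a$ and $\hat{d}(\nabla)^2 \omega^a_{\ b}(\nabla)$ will remain on the right-hand sides rather than cancelling as in (\ref{eb17}).

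For the first identity, I would apply $\hat{d}(\nabla)$ to $\hat{T}^a(\nabla) = \hat{d}(\nabla) e^a + \omega^a_{\ b}(\nabla) \wedge e^b$. The first summand immediately gives $\hat{d}(\nabla)^2 e^a$. For the second, the graded Leibniz rule with a $1$-form factor yields $\hat{d}(\nabla)(\omega^a_{\ b}(\nabla) \wedge e^b) = \hat{d}(\nabla)\omega^a_{\ b}(\nabla) \wedge e^b - \omega^a_{\ b}(\nabla) \wedge \hat{d}(\nabla) e^b$; I then substitute $\hat{d}(\nabla)\omega^a_{\ b}(\nabla) = R^a_{\ b}(\nabla) - \omega^a_{\ c}(\nabla) \wedge \omega^c_{\ b}(\nabla)$ from the second Cartan equation and $\hat{d}(\nabla) e^b = \hat{T}^b(\nabla) - \omega^b_{\ c}(\nabla) \wedge e^c$ from the first. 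The two cubic terms $\omega \wedge \omega \wedge e$ cancel after relabelling the summed index, leaving $R^a_{\ b}(\nabla) \wedge e^b - \omega^a_{\ b}(\nabla) \wedge \hat{T}^b(\nabla)$, which rearranges to the stated identity.

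For the second identity I would proceed in the same spirit: apply $\hat{d}(\nabla)$ to $R^a_{\ b}(\nabla) = \hat{d}(\nabla)\omega^a_{\ b}(\nabla) + \omega^a_{\ c}(\nabla) \wedge \omega^c_{\ b}(\nabla)$, producing $\hat{d}(\nabla)^2 \omega^a_{\ b}(\nabla)$ from the first term, and on the second term use the graded Leibniz rule followed by the second Cartan equation to eliminate each $\hat{d}(\nabla)\omega$. The quartic $\omega \wedge \omega \wedge \omega$ terms cancel in pairs after index relabelling, and what survives is $R^a_{\ c}(\nabla) \wedge \omega^c_{\ b}(\nabla) - \omega^a_{\ c}(\nabla) \wedge R^c_{\ b}(\nabla)$, which is the claimed formula once the $\omega \wedge R$ term is moved to the left.

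I do not anticipate a real obstacle here — the computation is essentially bookkeeping with wedge products and dummy indices — but the point that requires care is the sign $(-1)^{p_1}$ in the graded Leibniz rule (\ref{ec45}) for a $1$-form factor: it is precisely this sign that makes the cubic (respectively quartic) $\omega$-terms cancel, and getting it wrong would spuriously leave those terms in the final expression. I would close the proof by remarking that for pre-Lie algebroids, and in particular for the tangent bundle, one has $L = 0$ and $\hat{d}(\nabla)^2 = 0$, so that $\hat{T}(\nabla) = T(\nabla)$ and these identities reduce to the classical differential Bianchi identities (\ref{eb17}).
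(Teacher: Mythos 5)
Your proposal is correct and follows essentially the same route as the paper: apply $\hat{d}(\nabla)$ to the two Cartan structure equations, use the graded Leibniz rule (\ref{ec45}) with the $(-1)^{p_1}$ sign, and re-substitute the Cartan equations so that only the $\hat{d}(\nabla)^2$ terms survive as anomalies. The only cosmetic difference is that in the first identity you eliminate $\hat{d}(\nabla)\omega^a_{\ b}(\nabla)$ explicitly and cancel the cubic $\omega\wedge\omega\wedge e$ terms, whereas the paper simply regroups $\hat{d}(\nabla)\omega^a_{\ b}(\nabla)+\omega^a_{\ c}(\nabla)\wedge\omega^c_{\ b}(\nabla)$ into $R^a_{\ b}(\nabla)$ — the same computation.
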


\begin{proof}  These differential identities can be proven by taking the projected $E$-exterior derivative of Cartan structure equations. Let us start with the first one by taking the projected $E$-exterior derivative of the equation (\ref{ec49}):
\begin{align}
\hat{d}(\nabla) \hat{T}(\nabla) &= \hat{d}(\nabla) \left\{ \hat{d}(\nabla) e^a + \omega^a_{\ b}(\nabla) \wedge e^b \right\} \nonumber\\
&= \hat{d}(\nabla)^2 e^a + \hat{d}(\nabla) \omega^a_{\ b}(\nabla) \wedge e^b - \omega^a_{\ b}(\nabla) \wedge \hat{d}(\nabla) e^b \nonumber\\
&= \hat{d}(\nabla)^2 e^a + \hat{d}(\nabla) \omega^a_{\ b}(\nabla) \wedge e^b - \omega^a_{\ b}(\nabla) \wedge \left\{ \hat{T}^b(\nabla) - \omega^b_{\ c}(\nabla) \wedge e^c \right\} \nonumber\\
&= \hat{d}(\nabla)^2 e^a + \left\{ \hat{d}(\nabla) \omega^a_{\ b}(\nabla) + \omega^a_{\ c}(\nabla) \wedge \omega^c_{\ b}(\nabla) \right\} \wedge e^b - \omega^a_{\ b}(\nabla) \wedge \hat{T}^b(\nabla) \nonumber\\
&= \hat{d}(\nabla)^2 e^a + R^a_{\ b}(\nabla) \wedge e^b - \omega^a_{\ b}(\nabla) \wedge \hat{T}^b(\nabla), \nonumber
\end{align}
which implies the first differential Bianchi identity. For the second equation, we take the projected $E$-exterior derivative of the second Cartan structure equation (\ref{ec46}):
\begin{align}
\hat{d}(\nabla) R^a_{\ b}(\nabla) &= \hat{d}(\nabla) \left\{ \hat{d}(\nabla) \omega^a_{\ b}(\nabla) + \omega^a_{\ c}(\nabla) \wedge \omega^c_{\ b}(\nabla) \right\} \nonumber\\
&= \hat{d}(\nabla)^2 \omega^a_{\ b}(\nabla) + \hat{d}(\nabla) \omega^a_{\ c}(\nabla) \wedge \omega^c_{\ b}(\nabla) - \omega^a_{\ c}(\nabla) \wedge \hat{d}(\nabla) \omega^c_{\ b}(\nabla) \nonumber\\
&= \hat{d}(\nabla)^2 \omega^a_{\ b}(\nabla) + \left\{ R^a_{\ c}(\nabla) - \omega^a_{\ d}(\nabla) \wedge \omega^d_{\ c}(\nabla) \right\} \wedge \omega^c_{\ b}(\nabla) \nonumber\\
& \qquad \qquad \qquad \quad - \omega^a_{\ c}(\nabla) \wedge \left\{ R^c_{\ b}(\nabla) - \omega^c_{\ d}(\nabla) \wedge \omega^d_{\ b}(\nabla) \right\} \nonumber\\
&= \hat{d}(\nabla)^2 \omega^a_{\ b}(\nabla) + R^a_{\ c}(\nabla) \omega^c_{\ b}(\nabla) - \omega^a_{\ c}(\nabla) \wedge R^c_{\ b}(\nabla) \nonumber
\end{align}
which implies the second differential Bianchi identity.
\end{proof}

\noindent Note that both of these equations are identical with (\ref{eb17}), when $\hat{d}(\nabla)^2 = 0$ which is the case when the associator of the projected modified bracket vanishes.

With the projected $E$-torsion operator, the geometric meaning becomes even more transparent. Recall that, for an admissible linear $E$-connection $\nabla$, the projected modified bracket $[\cdot,\cdot]_E^{\hat{\nabla}}$ is a pre-Lie bracket by the proposition (\ref{pc3}). Both the projected $E$-torsion operator $\hat{T}(\nabla)$ (\ref{ec47}) and the $E$-curvature operator $R(\nabla)$ (\ref{ec22}) are just the usual $E$-torsion and $E$-curvature operators on the pre-Lie algebroid $(E, \rho, [\cdot,\cdot]_E^{\hat{\nabla}})$. This pre-Lie algebroid is constructed from the initial local pre-Leibniz algebroid $(E, \rho, [\cdot,\cdot]_E, L)$ with two additional structures:
\begin{enumerate}
\item An admissible linear $E$-connection $\nabla$,
\item A locality projector $\mathcal{P}$.
\end{enumerate}
Hence, the choice of such doublet $(\nabla, \mathcal{P})$ gives the pre-Lie algebroid in which the $E$-torsion, $E$-curvature operators and the $E$-exterior derivative are defined naturally. If the locality projector $\mathcal{P}$ is fixed (for example, for transitive pre-Leibniz algebroids of the form $T(M) \oplus ker(\rho)$ it can be naturally taken as the projection onto the second component), then each admissible linear $E$-connection $\nabla$ yields a pre-Lie algebroid structure. In other words, the equivalence class $[\nabla]_L$ of admissible linear $E$-connections corresponds to an equivalence class of pre-Lie algebroids. Whether this correspondence is one-to-one or not is related to the ``invertibility'' of the locality operator $L$. For example, if one starts with a pre-Lie algebroid, then $L = 0$ so that every linear $E$-connection is admissible, and the corresponding equivalence class of pre-Lie algebroids has only one element which is the original pre-Lie algebroid. The properties of this correspondence will be investigated more comprehensively in future. For example, whether it has some relations to the correspondence between Lie bialgebroids and Courant algebroids \cite{3} looks interesting. Also note that the anomalous terms in both the projected exterior derivative and Bianchi identities are about the associator, so a task about which admissible linear $E$-connections yield a Lie algebroid structure in which the associator vanishes is important.

This correspondence is directly related to the rank of the associated bundle of $E$-Levi-Civita connections, which are defined as $E$-metric-compatible and $E$-torsion-free (see \cite{27} for generalized Levi-Civita connections, and \cite{22} for the calculation of this rank for $E$-Levi-Civita connections on Courant algebroids). This is because each corresponding almost-Lie algebroid has its own unique $E$-Levi-Civita connection. All of these $E$-Levi-Civita connections on individual almost-Lie algebroids constitute a superset of $E$-Levi-Civita connections on the original anti-commutable pre-Leibniz algebroid. One natural way to seek for an $E$-Levi-Civita connection was proposed in our previous paper \cite{12} in which we modified the Koszul formula (\ref{eb18}) in order to find an equation with appropriate properties. If the left-hand side of the usual formula is modified with the following additional term:
\begin{equation} - g(L(e^a, \nabla_{X_a} v, w), u) - g(L(e^a, \nabla_{X_a} u, w), v) + g(L(e^a, \nabla_{X_a} u, v), w),
\label{ec55}
\end{equation}
then $\nabla$ satisfies the necessary properties for a linear $E$-connection. We called such linear $E$-connections that satisfy the modification of the Koszul formula as ``$E$-\textit{Koszul connections}''. Unfortunately, these connections are not $E$-Levi-Civita connections in general. Yet, one can note that these terms (\ref{ec55}) are exactly the necessary ones in order to write down the modified Koszul formula in the following form:
\begin{align} 
2 g(\nabla_u v, w) = & \ \rho(u)(g(v, w)) + \rho(v)(g(u, w)) - \rho(w)(g(u, v)) \nonumber\\
												& - g([v, w]_E^{\nabla}, u) - g([u, w]_E^{\nabla}, v) + g([u, v]_E^{\nabla}, w).
\label{ec56}
\end{align}

\begin{Proposition} A linear $E$-connection is $E$-Levi-Civita if and only if it is admissible and $E$-Koszul.
\label{pc13}
\end{Proposition}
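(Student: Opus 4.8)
The plan is to derive both implications from the reformulated Koszul identity (\ref{ec56}), which --- by the remark immediately preceding the statement --- is equivalent to $\nabla$ being $E$-Koszul, together with the modified-bracket expression $T(\nabla)(u,v) = \nabla_u v - \nabla_v u - [u,v]_E^{\nabla}$ of (\ref{ec15}) and the pointwise characterization $Q(\nabla,g)(w,u,v) = \rho(w)(g(u,v)) - g(\nabla_w u, v) - g(u, \nabla_w v)$ of $E$-non-metricity.

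First I would treat the forward implication. Assume $\nabla$ is $E$-Levi-Civita, i.e. $Q(\nabla,g) = 0$ and $T(\nabla) = 0$. Being $E$-torsion-free, $\nabla$ is admissible by Proposition \ref{pc1b}, so only (\ref{ec56}) remains to be checked. For this I would run the classical Koszul computation verbatim, with $[\cdot,\cdot]$ replaced by $[\cdot,\cdot]_E^{\nabla}$ and $V(\cdot)$ by $\rho(v)(\cdot)$: expand the three anchor-derivative terms on the right of (\ref{ec56}) using $Q(\nabla,g) = 0$, and replace each modified bracket by the relevant difference of covariant derivatives using $T(\nabla) = 0$ (legitimate since, by (\ref{ec15}), $T(\nabla) = 0$ says exactly $[u,v]_E^{\nabla} = \nabla_u v - \nabla_v u$); after the usual pairwise cancellations --- which need only the symmetry of $g$ --- the right-hand side collapses to $2g(\nabla_u v, w)$. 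This is routine bookkeeping.

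For the converse, assume $\nabla$ is admissible and $E$-Koszul. Admissibility makes the modified bracket $[\cdot,\cdot]_E^{\nabla}$ anti-symmetric (Proposition \ref{pc2}), and this anti-symmetry drives both conclusions. Subtracting from (\ref{ec56}) its image under $u \leftrightarrow v$, every anchor term and every ``middle'' bracket term cancels in pairs, leaving --- after using anti-symmetry of $[\cdot,\cdot]_E^{\nabla}$ on the surviving bracket --- the identity $g(\nabla_u v - \nabla_v u - [u,v]_E^{\nabla}, w) = 0$ for all $w$; non-degeneracy of $g$ then yields $T(\nabla) = 0$. Adding to (\ref{ec56}) its image under $v \leftrightarrow w$ instead, the same cancellations together with $[v,w]_E^{\nabla} + [w,v]_E^{\nabla} = 0$ leave $g(\nabla_u v, w) + g(v, \nabla_u w) = \rho(u)(g(v,w))$, i.e. $Q(\nabla,g) = 0$. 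Hence $\nabla$ is $E$-metric-compatible and $E$-torsion-free, that is, $E$-Levi-Civita.

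The step I expect to be the main obstacle is logical rather than computational: the permutation manipulations in the converse are valid only once $[\cdot,\cdot]_E^{\nabla}$ is known to be anti-symmetric, so one must first invoke admissibility via Proposition \ref{pc2}. Without admissibility, antisymmetrizing (\ref{ec56}) recovers not the $E$-torsion but only its ``symmetric defect'', and the argument breaks down --- which is exactly why admissibility cannot be dropped from the hypotheses, and why the correct characterization of $E$-Levi-Civita connections is the pair (admissible, $E$-Koszul) rather than $E$-Koszul alone. Everything else is the same chain of cancellations as in the proof of the classical fundamental theorem of semi-Riemannian geometry.
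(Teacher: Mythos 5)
Your proof is correct, and its overall strategy coincides with the paper's: the direction ``$E$-Levi-Civita $\Rightarrow$ admissible and $E$-Koszul'' is handled exactly as in the paper (torsion-freeness gives admissibility via Proposition \ref{pc1b}, then the classical Koszul derivation with $[\cdot,\cdot]_E^{\nabla}$ in place of the Lie bracket), and the key lever in both arguments is that admissibility makes $[\cdot,\cdot]_E^{\nabla}$ anti-symmetric. Where you genuinely diverge is in the direction ``admissible and $E$-Koszul $\Rightarrow$ $E$-Levi-Civita'': the paper computes the components $T(\nabla)^a_{\ bc}$ and $Q(\nabla,g)_{abc}$ on a local $E$-frame directly from the modified Koszul formula, recognizes them as the symmetric part of the modified anholonomy coefficients $\gamma(\nabla)^a_{\ bc}$, and kills that symmetric part by admissibility; you instead antisymmetrize and symmetrize the identity (\ref{ec56}) in its arguments invariantly and conclude by non-degeneracy of $g$. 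The two computations are equivalent (your $u\leftrightarrow v$ antisymmetrization is precisely the frame statement that $T(\nabla)^a_{\ bc}$ is the symmetric defect of $\gamma(\nabla)^a_{\ bc}$), but your version avoids the index bookkeeping and makes more transparent exactly where anti-symmetry of the modified bracket enters --- which is also the content of your closing remark about why admissibility cannot be dropped. Both implications check out as you describe them.
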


\begin{proof} First, let us assume that $\nabla$ is an $E$-Koszul connection. One can directly evaluate the components of the $E$-torsion and $E$-non-metricity tensors:
\begin{align} 
&T(\nabla)^a_{\ b c} = \frac{1}{2} \left[ \Gamma(\nabla)^e_{\ d c} L^{a d}_{\ \ e b} + \Gamma(\nabla)^e_{\ d b} L^{a d}_{\ \ e c} - \gamma^a_{\ b c} - \gamma^a_{\ c b} \right], \nonumber\\
&Q(\nabla, g)_{a b c} = \frac{1}{2} \left[ \gamma^f_{\ b c} + \gamma^f_{\ c b} - \Gamma(\nabla)^e_{\ d c} L^{f d}_{\ \ e b} - \Gamma(\nabla)^e_{\ d b} L^{f d}_{\ \ e c}  \right] g_{f a}. \nonumber
\end{align}
By using the modified anholonomy coefficients, these can be written as
\begin{align}
&T(\nabla)^a_{\ b c} = \frac{1}{2} \left\{ \gamma(\nabla)^a_{\ b c} - \gamma(\nabla)^a_{\ c b} \right\}, \nonumber\\
&Q(\nabla, g)_{a b c} = \frac{1}{2} \left\{ \gamma(\nabla)^f_{\ b c} - \gamma(\nabla)^f_{\ c b} \right\} g_{f a}. \nonumber
\end{align} 
For an admissible linear $E$-connection the modified bracket is anti symmetric so that the modified anholonmy coefficients are anti-symmetric in their lower indices. Hence, the $E$-torsion and $E$-non-metricity components vanish for an admissible $E$-Koszul connection, so it is an $E$-Levi-Civita connection. 

For the other direction, we use the fact that an $E$-Levi-Civita connection $\nabla$ is $E$-torsion-free by definition, and by the proposition (\ref{pc1b}), it has to be admissible. In order to show it is also $E$-Koszul, we use the ideas identical to the proof of the usual Koszul formula (\ref{eb18}). We start with the following fact due to $E$-metric-$g$-compatibility
\begin{equation} \rho(g(v, w)) = g(\nabla_u v, w) + g(v, \nabla_u w). \nonumber
\end{equation}
By using the $E$-torsion-freeness condition, this becomes
\begin{equation} \rho(u)(g(v, w)) = g(\nabla_v u, w) + g(\nabla_u w, v) + g([u, v]_E^{\nabla}, w). \nonumber
\end{equation}
Calculating the cyclic permutation in $u, v, w$ and evaluating 
\begin{equation} \rho(u)(g(v, w)) + \rho(v)(g(w, u)) - \rho(w)(g(u, v)), \nonumber
\end{equation}
we get
\begin{align}
2 g(\nabla_u v, w) = & \ \rho(u)(g(v, w)) + \rho(v)(g(u, w)) - \rho(w)(g(u, v)) \nonumber\\
												& + g([w, v]_E^{\nabla}, u) - g([u, w]_E^{\nabla}, v) - g([v, u]_E^{\nabla}, w). \nonumber
\end{align}
As we already proved that $\nabla$ is admissible, the anti-symmetry of the modified bracket $[.,.]_E^{\nabla}$ yields the desired result.
\end{proof}
This proposition implies that there are no $E$-Levi-Civita connections for local almost-Leibniz algebroids which are not anti-commutable. On the other hand, it shows that assuming $E$-metric-$g$-compatibility in the definition of $E$-Levi-Civita connections corresponding to some generalized metric $G$ on a Courant algebroid $(E, \rho, [.,.]_E, g)$ is unnecessary as the $E$-torsion-freeness implies this $g$-compatibility already. Moreover, with the following proposition we prove that only possible way to be an $E$-Levi-Civita connection is being the unique $E$-Levi-Civita connection on the almost-Lie algebroid which arises due to the modified bracket.

\begin{Proposition} If $\nabla$ is an admissible linear $E$-connection, then 
\begin{align} 
2 g(\nabla_u v, w) = & \ 2 g(^g \tilde{\nabla}_u v, w) \nonumber \\
& - Q(\nabla, g)(u, v, w) - Q(\nabla, g)(v, u, w) + Q(\nabla, g)(w, u, v)  \nonumber\\
& - g(T(\nabla)(v, w), u) - g(T(\nabla)(u, w), v) + g(T(\nabla)(u, v), w).
\label{ec57}
\end{align}
where $^g \tilde{\nabla}$ is the unique $E$-Levi-Civita connection on the almost-Lie algebroid $(E, \rho, [\cdot,\cdot]_E^{\nabla})$.
\label{pc14}
\end{Proposition}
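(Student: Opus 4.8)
The plan is to reduce the stated identity to the classical decomposition (\ref{eb21}), now read off on the almost-Lie algebroid built from the modified bracket. Since $\nabla$ is admissible, Proposition \ref{pc2} ensures that $[\cdot,\cdot]_E^{\nabla}$ is anti-symmetric, so that $(E,\rho,[\cdot,\cdot]_E^{\nabla})$ is a genuine almost-Lie algebroid. On this algebroid the \emph{unmodified} $E$-torsion of $\nabla$ is, by (\ref{ec15}), precisely $T(\nabla)$, while the $E$-non-metricity $\nabla g$ is unchanged and equals $Q(\nabla,g)$. Moreover, the fundamental-theorem analogue on an almost-Lie algebroid furnishes a unique $E$-metric-$g$-compatible and $[\cdot,\cdot]_E^{\nabla}$-torsion-free linear $E$-connection $^g \tilde{\nabla}$, characterised by the Koszul formula (\ref{eb18}) with the Lie bracket replaced by $[\cdot,\cdot]_E^{\nabla}$ and $V(\cdot)$ replaced by $\rho(V)(\cdot)$; this is exactly the $L=0$ instance of the modified Koszul formula (\ref{ec56}), and its existence and uniqueness follow from Proposition \ref{pc13}.

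With these identifications in hand, I would run the standard algebraic manipulation that produces (\ref{eb21}) from (\ref{eb18}), carried out verbatim in the almost-Lie algebroid $(E,\rho,[\cdot,\cdot]_E^{\nabla})$. Concretely, from the definition of $E$-non-metricity one has
\begin{equation*}
\rho(u)(g(v,w)) = g(\nabla_u v,w) + g(v,\nabla_u w) + Q(\nabla,g)(u,v,w),
\end{equation*}
and forming the cyclic combination $\rho(u)(g(v,w)) + \rho(v)(g(w,u)) - \rho(w)(g(u,v))$ and eliminating the mixed covariant derivatives via $\nabla_a b - \nabla_b a = [a,b]_E^{\nabla} + T(\nabla)(a,b)$ yields $2g(\nabla_u v,w)$ together with a combination of $g([\cdot,\cdot]_E^{\nabla},\cdot)$, $g(T(\nabla)(\cdot,\cdot),\cdot)$ and $Q(\nabla,g)$ terms. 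The purely-bracket part is then recognised, after using the anti-symmetry of $[\cdot,\cdot]_E^{\nabla}$ (legitimate precisely because $\nabla$ is admissible), as $2g(^g \tilde{\nabla}_u v,w)$ via the Koszul formula, leaving exactly the torsion and non-metricity correction terms of the statement. The only place where a symmetry of $Q(\nabla,g)$ is used is the interchange of its last two arguments — valid since $Q(\nabla,g) = \nabla g$ and $g$ is symmetric — which matches $Q(\nabla,g)(v,w,u)$ with $Q(\nabla,g)(v,u,w)$.

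I expect the main obstacle to be bookkeeping rather than anything structural: one must check that every use of the anti-symmetry of the Lie bracket in the manifold proof is replaced by the corresponding property of $[\cdot,\cdot]_E^{\nabla}$, available only for admissible $\nabla$; that the pre-Leibniz anchor-bracket compatibility is \emph{not} needed for the Koszul computation, since only the right-Leibniz rule and anti-symmetry enter; and that the two last slots of $Q(\nabla,g)$ are tracked consistently throughout. Once this accounting is done, (\ref{ec57}) follows with no further input.
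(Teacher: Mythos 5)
Your proposal is correct and follows essentially the same route as the paper: the paper also isolates the anti-symmetric part via $\nabla_u v - \nabla_v u = T(\nabla)(u,v) + [u,v]_E^{\nabla}$ and the symmetric part via the cyclic combination of $Q(\nabla,g)$, then recognises the surviving $\rho$-derivative and bracket terms as the Koszul formula for $^g\tilde{\nabla}$ on $(E,\rho,[\cdot,\cdot]_E^{\nabla})$, with admissibility invoked exactly where you invoke it (anti-symmetry of the modified bracket and of $T(\nabla)$). Your remark that $Q(\nabla,g)(v,w,u)=Q(\nabla,g)(v,u,w)$ by symmetry of $g$ correctly accounts for the only cosmetic discrepancy between the raw computation and the stated formula.
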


\begin{proof} This can be proven by copying the Schouten's trick for evaluating the symmetric and anti-symmetric parts of the $E$-connection. The anti-symmetric part is related to the $E$-torsion:
\begin{equation} \nabla_u v - \nabla_v u = T(\nabla)(u, v) + [u, v]_E^{\nabla} \quad \implies \quad g(\nabla_u v - \nabla_v u, w) = g(T(\nabla)(u, v), w) + g([u, v]_E^{\nabla}, w). \nonumber
\end{equation}
For the symmetric part we consider the combination
\begin{equation} Q(\nabla, g)(u, v, w) + Q(\nabla, g)(v, w, u) - Q(\nabla, g)(w, u, v), \nonumber
\end{equation}
which yields 
\begin{align}
g(\nabla_u v + \nabla_v u, w) &= \rho(u)(g(v, w)) + \rho(v)(g(u, w)) - \rho(w)(g(u, v) \nonumber\\
& \quad \ -g([v, w]_E^{\nabla}, u) - g([u, w]_E^{\nabla}, v) \nonumber\\
& \quad \ - Q(\nabla, g)(u, v, w) - Q(\nabla, g)(v, w, u) + Q(\nabla, g)(w, u, v), \nonumber\\
& \quad \ - g(T(\nabla)(v, w), u) - g(T(\nabla)(u, w), w). \nonumber
\end{align}
Note that in the last step the admissibility is crucial, because it is necessary to use the anti-symmetry of the modified bracket and the $E$-torsion operator. Combining the anti-symmetric and symmetric parts, and singling out the unique $E$-Levi-Civita connection $^g \tilde{\nabla}$ on the almost-Lie algebroid $(E, \rho, [.,.]_E^{\nabla})$ given by the Koszul formula yield the desired result.
\end{proof}

\noindent Note that similar points can be proven for projected $E$-torsion and projected $E$-Levi-Civita connections by making appropriate changes. On a local $E$-frame the equation (\ref{ec57}) becomes
\begin{align} 
\Gamma(\nabla)^a_{\ b c} = & \ \Gamma(^g \tilde{\nabla})^a_{\ b c} + \frac{1}{2} g^{a d} \Big[- Q(\nabla, g)_{b d c} + Q(\nabla, g)_{d c b} - Q(\nabla, g)_{c b d} \nonumber\\
& \qquad \qquad \qquad \qquad - g_{e c} \ T(\nabla)^e_{\ b d} + g_{e d} \ T(\nabla)^e_{\ b c} - g_{e b} \ T(\nabla)^e_{\ c d} \Big],
\label{ec58} 
\end{align}

One can extend the action of the brackets $[v,\cdot]_E, [v,\cdot]_E^{\nabla}$ and $[v,\cdot]_E^{\hat{\nabla}}$ to $E$-Leibniz derivatives $\mathcal{L}_v, \mathcal{L}_v^{\nabla}$ and $\hat{\mathcal{L}}_u^{\nabla}$ respectively in a completely analogous way to the Lie derivative. Action of these $E$-Leibniz derivatives agree on the smooth functions:
\begin{equation} \mathcal{L}_v f = \mathcal{L}_v^{\nabla} f = \hat{\mathcal{L}}_v^{\nabla} f = \rho(v)(f),
\label{ec59}
\end{equation}
for all $f \in C^{\infty}(M, \mathbb{R}), v \in \mathfrak{X}(E)$.

\begin{Proposition} For an $E$-Levi-Civita connection $\nabla$ corresponding to an $E$-metric $g$, the following holds
\begin{equation} \left( \mathcal{L}^{\nabla}_ v g \right) (u, w) = g(\nabla_u v, w) + g(u, \nabla_w v),
\label{ec60}
\end{equation}
for all $u, v, w \in \mathfrak{X}(E)$.
\label{pc15}
\end{Proposition}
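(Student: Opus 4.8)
The plan is to unwind the definition of the $E$-Leibniz derivative $\mathcal{L}^{\nabla}_v$ acting on the $(0,2)$-type $E$-tensor $g$, in exact analogy with the ordinary Lie derivative of a bilinear form. Since $\mathcal{L}^{\nabla}_v$ is the extension of $[v,\cdot]_E^{\nabla}$ to the whole $E$-tensor algebra by the Leibniz rule, and its action on smooth functions is $\rho(v)(f)$ by (\ref{ec59}), one obtains
\begin{equation}
(\mathcal{L}^{\nabla}_v g)(u, w) = \rho(v)(g(u, w)) - g([v, u]_E^{\nabla}, w) - g(u, [v, w]_E^{\nabla}). \nonumber
\end{equation}
This is the only structural input; the rest is substitution.

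Next I would invoke the two defining properties of an $E$-Levi-Civita connection. First, $E$-metric-$g$-compatibility $Q(\nabla, g) = 0$ rewrites the anchor term as $\rho(v)(g(u, w)) = g(\nabla_v u, w) + g(u, \nabla_v w)$. Second, $E$-torsion-freeness: by Proposition \ref{pc1b} an $E$-torsion-free connection is automatically admissible, so the modified bracket $[\cdot,\cdot]_E^{\nabla}$ is anti-symmetric and (\ref{ec15}) gives $0 = T(\nabla)(u,v) = \nabla_u v - \nabla_v u - [u,v]_E^{\nabla}$, whence $[v, u]_E^{\nabla} = \nabla_v u - \nabla_u v$ and likewise $[v, w]_E^{\nabla} = \nabla_v w - \nabla_w v$.

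Substituting these three identities into the expansion and collecting, the contributions $g(\nabla_v u, w)$ and $g(u, \nabla_v w)$ cancel pairwise, leaving exactly $g(\nabla_u v, w) + g(u, \nabla_w v)$, which is (\ref{ec60}). Since every step is a direct substitution, there is no genuine obstacle; the only points requiring a little care are to fix the sign conventions in the Leibniz-derivative expansion so that they match those of the Lie derivative on a $(0,2)$-tensor, and to note that the rewriting of $[v,u]_E^{\nabla}$ via torsion-freeness is legitimate precisely because torsion-freeness forces admissibility (Proposition \ref{pc1b}), rather than this being an extra hypothesis.
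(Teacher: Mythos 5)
Your proof is correct, but it takes a different route from the paper's. The paper proves this proposition by first invoking Proposition \ref{pc13} ($E$-Levi-Civita $\iff$ admissible and $E$-Koszul), then writing out the modified Koszul formula (\ref{ec56}) twice --- once for $2g(\nabla_u v, w)$ and once for $2g(\nabla_w v, u)$ --- and adding the two expressions, using the symmetry of $g$ and the anti-symmetry of the modified bracket to cancel most terms and leave exactly $\rho(v)(g(u,w)) - g([u,v]_E^{\nabla}, w) - g(u,[v,w]_E^{\nabla}) = (\mathcal{L}^{\nabla}_v g)(u,w)$. You instead start from the Leibniz-rule expansion of $\mathcal{L}^{\nabla}_v g$ and substitute the two defining properties of an $E$-Levi-Civita connection directly: metric-compatibility to convert the anchor term, and torsion-freeness to convert the bracket terms via $[v,u]_E^{\nabla} = \nabla_v u - \nabla_u v$. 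Your route is more elementary --- it bypasses the Koszul characterization entirely and only needs Proposition \ref{pc1b} for the (correct) remark that admissibility is automatic rather than an extra hypothesis; indeed, the bracket substitution follows from $T(\nabla)(v,u)=0$ alone, without even appealing to anti-symmetry. What the paper's route buys is consistency with its surrounding narrative, where the Koszul formula (\ref{ec56}) is the central object; what yours buys is transparency, making it evident that the identity is verbatim the standard manifold computation for $\mathcal{L}_V g$ transplanted to the modified bracket. One small point in your favor: your sign convention $(\mathcal{L}^{\nabla}_v g)(u,w) = \rho(v)(g(u,w)) - g([v,u]_E^{\nabla}, w) - g(u,[v,w]_E^{\nabla})$ is the one actually dictated by extending $[v,\cdot]_E^{\nabla}$ by the Leibniz rule, and your care in fixing it is warranted, since the two bracket terms only agree with the form displayed in the paper after using anti-symmetry of the modified bracket.
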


\begin{proof} Let $\nabla$ be an $E$-Levi-Civita connection. By the proposition (\ref{pc13}) it is an admissible $E$-Koszul connection. Then, by definition it satisfies
\begin{align} 
2 g(\nabla_u v, w) = & \ \rho(u)(g(v, w)) + \rho(v)(g(u, w)) - \rho(w)(g(u, v)) \nonumber\\
												& - g([v, w]_E^{\nabla}, u) - g([u, w]_E^{\nabla}, v) + g([u, v]_E^{\nabla}, w), \nonumber
\end{align}
and 
\begin{align} 
2 g(u, \nabla_w v) = 2 g(\nabla_w v, u) = & \ \rho(w)(g(u, v)) + \rho(v)(g(w, u)) - \rho(u)(g(w, v)) \nonumber\\
												& - g([v, u]_E^{\nabla}, w) - g([w, u]_E^{\nabla}, v) + g([w, v]_E^{\nabla}, u). \nonumber
\end{align}
If one adds them together side by side, by using the symmetry of the $E$-metric and the anti-symmetry of the modified bracket for an admissible linear $E$-connection, one gets
\begin{align}
g(\nabla_u v, w) + g(u, \nabla_w v) &= \rho(v)(g(u, w)) - g([u, v]_E^{\nabla}, w) - g(u, [v, w]_E^{\nabla}) \nonumber\\
&= \left( \mathcal{L}^{\nabla}_v g \right) (u, w). \nonumber
\end{align}
\end{proof}

\noindent Note that this is valid for Levi-Civita connections in the usual metric-affine geometry.

\begin{Proposition} For an admissible linear $E$-connection $\nabla$, the following Cartan magic formulas hold for all $E$-$p$-forms:
\begin{align} 
&\mathcal{L}_v^{\nabla} = d(\nabla) \iota_v + \iota_v d(\nabla), \nonumber\\
&\hat{\mathcal{L}}_v^{\nabla} = \hat{d}(\nabla) \iota_v + \iota_v \hat{d}(\nabla).
\label{ec61}
\end{align}
\label{pc16}
\end{Proposition}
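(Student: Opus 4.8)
The plan is to recognise that, once $\nabla$ is admissible, both identities in~(\ref{ec61}) are instances of a single statement: the Cartan magic formula for the Koszul-type exterior derivative attached to an \emph{anti-symmetric} almost-Leibniz bracket. By Proposition~\ref{pc2} the modified bracket $[\cdot,\cdot]_E^{\nabla}$ is an almost-Lie bracket, and by Proposition~\ref{pc3} the projected modified bracket $[\cdot,\cdot]_E^{\hat{\nabla}}$ is a pre-Lie bracket; in particular both are anti-symmetric and obey the right-Leibniz rule. The maps $d(\nabla)$ and $\hat{d}(\nabla)$ of~(\ref{ec39})--(\ref{ec40}) are then exactly the exterior derivatives attached to these two brackets, while $\mathcal{L}_v^{\nabla}$ and $\hat{\mathcal{L}}_v^{\nabla}$ are the corresponding Leibniz derivatives. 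Hence it suffices to prove: for an anti-symmetric almost-Leibniz bracket $[\cdot,\cdot]'$ on $E$, with exterior derivative $d'$ defined by~(\ref{ec39}) (with $[\cdot,\cdot]_E^{\nabla}$ replaced by $[\cdot,\cdot]'$) and Leibniz derivative $\mathcal{L}'_v$, one has $\mathcal{L}'_v = d'\iota_v + \iota_v d'$ on all of $\Omega^p(E)$. I note in advance that no Jacobi or associator hypothesis enters — the magic formula is first order in the bracket — which is why it holds for $d(\nabla)$ even though $d(\nabla)^2\neq 0$ in general.

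The argument is the standard derivation argument. First I would observe that both sides are degree-$0$ graded derivations of $(\Omega^{\bullet}(E),\wedge)$: the Leibniz derivative $\mathcal{L}'_v$ because, exactly like the Lie derivative, it extends to the full $E$-tensor algebra by the Leibniz rule and commutes with contractions; and $d'\iota_v + \iota_v d'$ because it is the graded commutator of the degree-$1$ graded derivation $d'$ — the derivation property~(\ref{ec45}) holds precisely because $\nabla$ is admissible — with the degree-$(-1)$ graded derivation $\iota_v$, and the graded commutator of two graded derivations is again a graded derivation, of the summed degree. Next I would use that two degree-$0$ graded derivations of $\Omega^{\bullet}(E)$ agreeing on $\Omega^0(E)=C^{\infty}(M,\mathbb{R})$ and $\Omega^1(E)=\Gamma(E^*)$ coincide: both sides are local operators, so one may compute on a local $E$-frame $(X_a)$ with coframe $(e^a)$, and there every $E$-$p$-form is a $C^{\infty}(M,\mathbb{R})$-combination of wedge products $e^{a_1}\wedge\cdots\wedge e^{a_p}$, so that $\Omega^{\bullet}(E)$ is generated, under $\wedge$ and addition, by degrees $0$ and $1$.

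It then remains only to check degrees $0$ and $1$. For $f\in C^{\infty}(M,\mathbb{R})$ one has $\iota_v f=0$, so $(d'\iota_v + \iota_v d')f = \iota_v d'f = (d'f)(v) = \rho(v)(f)$ by~(\ref{ec44}), which equals $\mathcal{L}'_v f$ by~(\ref{ec59}). For $\omega\in\Omega^1(E)$ and $u\in\mathfrak{X}(E)$ the defining formula gives $(d'\omega)(u,w)=\rho(u)(\omega(w)) - \rho(w)(\omega(u)) - \omega([u,w]')$, hence
\begin{align}
(d'\iota_v\omega + \iota_v d'\omega)(u) &= \rho(u)(\omega(v)) + (d'\omega)(v,u) \nonumber\\
&= \rho(u)(\omega(v)) + \rho(v)(\omega(u)) - \rho(u)(\omega(v)) - \omega([v,u]') \nonumber\\
&= \rho(v)(\omega(u)) - \omega([v,u]'), \nonumber
\end{align}
while the Leibniz rule $\rho(v)(\omega(u)) = (\mathcal{L}'_v\omega)(u) + \omega([v,u]')$ shows this equals $(\mathcal{L}'_v\omega)(u)$. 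Thus the two derivations agree on the generators, hence on all of $\Omega^{\bullet}(E)$. Specialising to $[\cdot,\cdot]'=[\cdot,\cdot]_E^{\nabla}$ and to $[\cdot,\cdot]'=[\cdot,\cdot]_E^{\hat{\nabla}}$ yields the two formulas in~(\ref{ec61}).

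The one point that needs care — more a bookkeeping issue than a genuine obstacle — is to justify that $d(\nabla)$ (respectively $\hat{d}(\nabla)$) is a bona fide graded derivation of $\Omega^{\bullet}(E)$ to begin with: this is exactly where admissibility is used, since anti-symmetry of the modified (respectively projected) bracket is what makes~(\ref{ec39})/(\ref{ec40}) land in $E$-forms and makes~(\ref{ec45}) valid, and one must also note that passing to a local coframe is harmless because every operator in sight is local. Once that is granted, only the right-Leibniz rule and the anti-symmetry of the bracket are invoked — not the Jacobi identity — and the proof closes.
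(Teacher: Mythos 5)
Your proposal is correct and follows essentially the same route as the paper: the heart of both arguments is the identical direct computation on an $E$-1-form, $(d(\nabla)\iota_v\Omega+\iota_v d(\nabla)\Omega)(u)=\rho(v)(\Omega(u))-\Omega([v,u]_E^{\nabla})=(\mathcal{L}_v^{\nabla}\Omega)(u)$, using only the right-Leibniz rule and anti-symmetry of the modified (respectively projected modified) bracket. The only difference is that where the paper extends to general $p$ by an unspoken induction, you make that step explicit via the standard graded-derivation argument (both sides are degree-$0$ derivations agreeing on degrees $0$ and $1$), which is a legitimate and slightly more careful way to close the same gap.
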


\begin{proof} We will prove only the first one because the proofs are exactly the same. We will show it only for an $E$-1-form $\Omega$ but it is valid for any $E$-$p$-form, which can be proven by induction:
\begin{align} 
(d(\nabla) \iota_v \Omega)(u)+ (\iota_v d(\nabla) \Omega)(u) &= d(\Omega(v))(u) + d \Omega (v, u) \nonumber\\
&= \rho(u)(\Omega(v)) + \left\{ \rho(v)(\Omega(u)) - \rho(u)(\Omega(v)) - \Omega([v, u]_E^{\nabla}) \right\} \nonumber\\
&= \rho(v)(\Omega(u)) - \Omega([v, u]_E^{\nabla}) \nonumber\\
&= \left( \mathcal{L}^{\nabla}_v \Omega \right) (u). \nonumber
\end{align}
\end{proof}

\begin{Corollary} For an admissible linear $E$-connection $\nabla$,
\begin{align} 
&\mathcal{L}_{f v}^{\nabla} \Omega = f \mathcal{L}_v^{\nabla} \Omega + d(\nabla) f \wedge \iota_v \Omega, \nonumber\\
&\mathcal{L}_{f v}^{\hat{\nabla}} \Omega = f \mathcal{L}_v^{\hat{\nabla}} \Omega + \hat{d}(\nabla) f \wedge \iota_v \Omega,
\end{align}
for all $v \in \mathfrak{X}(E), \Omega \in \Omega^p(E), f \in C^{\infty}(M, \mathbb{R})$. 
\label{cc4}
\end{Corollary}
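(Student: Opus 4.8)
The plan is to obtain both identities as immediate consequences of the Cartan magic formulas of Proposition~\ref{pc16}, combined with the graded-derivation property~(\ref{ec45}) of $d(\nabla)$ and $\hat d(\nabla)$. The only auxiliary fact needed is that the $E$-interior product is $C^\infty(M,\mathbb{R})$-linear in its $E$-vector field slot, i.e. $\iota_{fv}\Omega = f\,\iota_v\Omega$ for all $f\in C^\infty(M,\mathbb{R})$, $v\in\mathfrak{X}(E)$, $\Omega\in\Omega^p(E)$. This is immediate from the definition of $\iota_v$ as a degree-$(-1)$ graded derivation, exactly as in the classical case, and it is the source of the $f$-homogeneity on the right-hand side.

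For the first identity I would apply Proposition~\ref{pc16} to the $E$-vector field $fv$ and then expand using the linearity just recalled:
\begin{align}
\mathcal{L}_{fv}^{\nabla}\Omega &= d(\nabla)\,\iota_{fv}\Omega + \iota_{fv}\,d(\nabla)\Omega \nonumber\\
&= d(\nabla)\bigl(f\,\iota_v\Omega\bigr) + f\,\iota_v\,d(\nabla)\Omega. \nonumber
\end{align}
Since $f$ is an $E$-$0$-form, applying the Leibniz rule~(\ref{ec45}) to the product $f\wedge\iota_v\Omega$ gives $d(\nabla)(f\,\iota_v\Omega) = d(\nabla)f\wedge\iota_v\Omega + f\,d(\nabla)\iota_v\Omega$. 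Substituting this back and regrouping the two terms carrying a factor $f$ into a magic formula yields
\[
\mathcal{L}_{fv}^{\nabla}\Omega = d(\nabla)f\wedge\iota_v\Omega + f\,\bigl(d(\nabla)\iota_v\Omega + \iota_v\,d(\nabla)\Omega\bigr) = d(\nabla)f\wedge\iota_v\Omega + f\,\mathcal{L}_{v}^{\nabla}\Omega,
\]
which is precisely the claim. The second identity is obtained verbatim, replacing $d(\nabla)$ by $\hat d(\nabla)$, $\mathcal{L}_v^{\nabla}$ by $\hat{\mathcal{L}}_v^{\nabla}$, and invoking the hatted Cartan magic formula of Proposition~\ref{pc16} together with the hatted Leibniz rule of~(\ref{ec45}).

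There is no serious obstacle here: admissibility of $\nabla$ is used only through Proposition~\ref{pc16} — it is what makes $d(\nabla)$ and $\hat d(\nabla)$ well-defined on $E$-forms and validates the magic formulas — and no further hypothesis is required. The single point deserving a word of care is the $C^\infty(M,\mathbb{R})$-linearity of $\iota_v$ in the vector-field argument; everything after that is the standard two-line manipulation of graded derivations. As an alternative, one could argue by induction on the form degree $p$ mirroring the proof of Proposition~\ref{pc16}, but routing through the magic formula is shorter and keeps the $d(\nabla)$ and $\hat d(\nabla)$ cases parallel.
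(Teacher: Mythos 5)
Your proof is correct and takes the same route as the paper, which also derives the corollary directly from the Cartan magic formulas of Proposition~\ref{pc16}; you have simply written out the two-line computation (using $\iota_{fv}=f\,\iota_v$ and the graded Leibniz rule (\ref{ec45})) that the paper leaves implicit. Nothing is missing.
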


\begin{proof} This is a direct consequence of the Cartan magic formulas (\ref{ec61}).
\end{proof}

%\begin{Corollary} For any admissible linear $E$-connection $\nabla$, the following formulas hold
%\begin{align} 
%&\mathcal{L}_v^{\nabla} d(\nabla) = d(\nabla) \mathcal{L}_v^{\nabla}, \nonumber\\
%&\hat{\mathcal{L}}_v^{\nabla} \hat{d}(\nabla) = \hat{d}(\nabla) \hat{\mathcal{L}}_v^{\nabla}.
%\label{ec34}
%\end{align}
%\label{cc9}
%\end{Corollary}

%\begin{proof} By using the Cartan magic formula twice, we get
%\begin{align} 
%\mathcal{L}_v^{\nabla} d(\nabla) &= \left\{ d(\nabla) \iota_v + \iota_v d(\nabla) \right\} d(\nabla) \nonumber\\
%&= d(\nabla) \iota_v d(\nabla) + \iota_v d(\nabla) d(\nabla) \nonumber\\
%&= d(\nabla) \left\{ \mathcal{L}_v^{\nabla} - d(\nabla) \iota_v \right\} + \iota_v d(\nabla) d(\nabla) \nonumber\\
%\end{align}
%\end{proof}

\begin{Proposition} The following formulas hold
\begin{align} 
&\mathcal{L}_u \iota_v - \iota_v \mathcal{L}_u = \iota_{[u, v]_E}, \nonumber\\
&\mathcal{L}_v (\Omega_1 \wedge \Omega_2) = \mathcal{L}_v \Omega_1 \wedge \Omega_2 + \Omega_1 \wedge \mathcal{L}_v \Omega_2, 
\label{ec62}
\end{align}
for all $u, v \in \mathfrak{X}(E), \Omega_i \in \Omega^{p_i}(E)$. 
\label{pc17}
\end{Proposition}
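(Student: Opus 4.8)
The plan is to work directly from the coordinate-free formula that defines the $E$-Leibniz derivative $\mathcal{L}_v$ on $E$-$p$-forms, which is the exact analogue of the Cartan formula for the ordinary Lie derivative used implicitly in the proof of Proposition \ref{pc16}:
\[
(\mathcal{L}_v \Omega)(v_1, \ldots, v_p) = \rho(v)\big(\Omega(v_1, \ldots, v_p)\big) - \sum_{k=1}^{p} \Omega\big(v_1, \ldots, [v, v_k]_E, \ldots, v_p\big),
\]
for all $\Omega \in \Omega^p(E)$ and $v, v_1, \ldots, v_p \in \mathfrak{X}(E)$. First I would check that this expression is tensorial, i.e. $C^{\infty}(M,\mathbb{R})$-multilinear in the $v_i$: replacing $v_1$ by $f v_1$ produces an extra term $\rho(v)(f)\,\Omega(v_1, \ldots, v_p)$ from the derivation property of $\rho(v)$ and an extra term $-\rho(v)(f)\,\Omega(v_1, \ldots, v_p)$ from the right-Leibniz rule (\ref{ec4}) applied to $[v, f v_1]_E$, and these cancel. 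In particular the only structure used is the right-Leibniz rule of an almost-Leibniz algebroid, so both identities hold in that generality, with no admissibility or anchor-compatibility hypothesis.

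For the first identity I would evaluate both sides on arbitrary $v_1, \ldots, v_{p-1} \in \mathfrak{X}(E)$, using $(\iota_v \Omega)(v_1, \ldots, v_{p-1}) = \Omega(v, v_1, \ldots, v_{p-1})$. Expanding $(\mathcal{L}_u \iota_v \Omega)(v_1, \ldots, v_{p-1})$ yields $\rho(u)\big(\Omega(v, v_1, \ldots, v_{p-1})\big) - \sum_{k} \Omega(v, v_1, \ldots, [u, v_k]_E, \ldots, v_{p-1})$, while $(\iota_v \mathcal{L}_u \Omega)(v_1, \ldots, v_{p-1}) = (\mathcal{L}_u \Omega)(v, v_1, \ldots, v_{p-1})$ produces the same two groups of terms together with one further term $-\Omega([u, v]_E, v_1, \ldots, v_{p-1})$, coming precisely from the slot now occupied by $v$. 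Subtracting, the $\rho(u)$-terms and all substitution terms involving $v_1, \ldots, v_{p-1}$ cancel, leaving $(\mathcal{L}_u \iota_v \Omega - \iota_v \mathcal{L}_u \Omega)(v_1, \ldots, v_{p-1}) = \Omega([u, v]_E, v_1, \ldots, v_{p-1}) = (\iota_{[u, v]_E}\Omega)(v_1, \ldots, v_{p-1})$, which is the claim.

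For the second identity I would evaluate on $v_1, \ldots, v_{p_1 + p_2}$ using the shuffle expansion of the wedge product, writing $(\Omega_1 \wedge \Omega_2)(v_1, \ldots, v_{p_1+p_2})$ as a signed sum of products $\Omega_1(v_{\sigma(1)}, \ldots, v_{\sigma(p_1)})\,\Omega_2(v_{\sigma(p_1+1)}, \ldots, v_{\sigma(p_1+p_2)})$ over $(p_1,p_2)$-shuffles $\sigma$. The $\rho(v)(\,\cdot\,)$ term of $\mathcal{L}_v(\Omega_1 \wedge \Omega_2)$ distributes over each such product since $\rho(v)$ is a genuine vector field on $M$, hence a derivation of $C^{\infty}(M,\mathbb{R})$, and this reproduces the $\rho(v)$-parts of $\mathcal{L}_v \Omega_1 \wedge \Omega_2$ and of $\Omega_1 \wedge \mathcal{L}_v \Omega_2$; each substitution term $-\,(\Omega_1 \wedge \Omega_2)(\ldots, [v, v_k]_E, \ldots)$ contributes to $\mathcal{L}_v \Omega_1 \wedge \Omega_2$ or to $\Omega_1 \wedge \mathcal{L}_v \Omega_2$ according to which of the two argument blocks the index $k$ falls into under $\sigma$, with matching sign since $[v, v_k]_E$ occupies the same slot as $v_k$. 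Equivalently, one may first settle the case $p_1 = 0$, where the statement reduces to $\mathcal{L}_v(f\Omega_2) = \rho(v)(f)\,\Omega_2 + f\,\mathcal{L}_v\Omega_2$, immediate from the formula above, together with the case of a product of $E$-$1$-forms, and then extend by $C^{\infty}(M,\mathbb{R})$-linearity using that every $E$-form is locally a sum of wedges of $E$-$1$-forms.

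The computations are routine; the only genuine care points are (i) verifying that the defining formula for $\mathcal{L}_v$ on $E$-forms is tensorial, which is exactly what forces the right-Leibniz rule to be the minimal hypothesis, and (ii) the sign bookkeeping in the shuffle sum for the second identity. I would emphasize that, unlike in the admissible-connection situation of Proposition \ref{pc16}, there is no shortcut here via a Cartan magic formula, since the unmodified bracket $[\cdot,\cdot]_E$ has no associated $E$-exterior derivative; the direct combinatorial route above is therefore the natural one.
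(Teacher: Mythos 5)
Your proof is correct and takes essentially the same route as the paper's: expand the defining formula for the $E$-Leibniz derivative on forms and cancel termwise, using only the right-Leibniz rule (\ref{ec4}). The paper writes out only the $E$-1-form case of the first identity (appealing to induction for general $p$) and skips the second identity entirely, so your general-$p$ computation and the shuffle argument for the wedge rule are strictly more complete; you also correctly use the unmodified bracket $[\cdot,\cdot]_E$ throughout, where the paper's displayed calculation inadvertently writes $[\cdot,\cdot]_E^{\nabla}$.
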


\begin{proof} Again, we will give the proof for an $E$-1-form $\Omega$ and the most general case can be proven by using induction.
\begin{align}
\left( \mathcal{L}_u \iota_v \right) \Omega - \left( \iota_v \mathcal{L}_u \right) &= \mathcal{L}_u (\Omega(v)) - \iota_v \left\{ \rho(u) \Omega(.) - \Omega([u, .]_E^{\nabla}) \right\} \nonumber\\
&= \rho(u)(\Omega(v)) - \rho(u)(\Omega(v)) + \Omega([u, v]_E^{\nabla}) \nonumber\\
&= \Omega([u, v]_E^{\nabla}) \nonumber\\
&= \iota_{[u, v]_E^{\nabla}} \Omega. \nonumber
\end{align}
We will skip the proof of the second identity.
\end{proof}

\noindent This proposition is also valid for modified and projected modified brackets with their corresponding $E$-Leibniz derivatives. Moreover, as they are anti-symmetric for an admissible linear $E$-connection, one gets
\begin{align} 
&\mathcal{L}_v^{\nabla} \iota_v = \iota_v \mathcal{L}_v^{\nabla}, \nonumber\\
&\mathcal{L}_v^{\hat{\nabla}} \iota_v = \iota_v \mathcal{L}_v^{\hat{\nabla}}.
\label{ec63}
\end{align}

Note that all of these formulas (\ref{ec61}, \ref{ec62} \ref{ec63}) are valid for the usual metric-affine geometry. There are other important formulas which hold for the manifold case:
\begin{align} 
&\mathcal{L}_U \mathcal{L}_V - \mathcal{L}_V \mathcal{L}_U = \mathcal{L}_{[U, V]}, \nonumber\\
&\mathcal{L}_V d = d \mathcal{L}_V,
\label{ec64}
\end{align}
for all usual vector fields $U$ and $V$. These do not hold for the brackets $[.,.]_E, [.,.]_E^{\nabla}$ or $[.,.]_E^{\hat{\nabla}}$ in general as they do not satisfy the Jacobi identity. When their associator is 0, these formulas also hold for them.

The set of all $E$-forms $\Omega(E) := \oplus_{p = 0}^{rank(E)} \Omega^p(E)$ is a graded commutative algebra. Moreover, the set of all graded derivations on $\Omega(E)$, denoted by $Der(\Omega(E)) := \oplus_{k} Der^k(\Omega(E))$ where $Der^k(\Omega(E))$ is the set of graded derivations of degree-$k$, becomes a graded Lie algebra with the graded Lie bracket:

\begin{equation} [D_1, D_2]_{der} := D_1 D_2 - (-1)^{k_1 k_2} D_2 D_1,
\label{ec65}
\end{equation}
where $D_i \in Der^{k_i}(\Omega(E))$. So far, it has been mentioned that 
\begin{align} 
\iota_v &\in Der^{-1}(\Omega(E)), \nonumber\\
\mathcal{L}_v, \mathcal{L}_v^{\nabla}, \hat{\mathcal{L}}_v^{\nabla} &\in Der^0(\Omega(E)), \nonumber\\
d(\nabla), \hat{d}(\nabla) &\in Der^1(\Omega(E)),
\label{ec66}
\end{align}
for all $v \in \mathfrak{X}(E)$ and admissible linear $E$-connection $\nabla$. Some of the previously proven properties can be written by using the graded Lie bracket on the set of graded derivations:

\begin{align} 
\hat{d}(\nabla)^2 = 0 \qquad &\iff \qquad \frac{1}{2} [\hat{d}(\nabla), \hat{d}(\nabla)]_{der} = 0, \nonumber\\
\hat{d}(\nabla) \iota_v + \iota_v \hat{d}(\nabla) = \hat{\mathcal{L}}_v^{\nabla} \qquad &\iff \qquad [\hat{d}(\nabla), \iota_v]_{der} = \hat{\mathcal{L}}_v^{\nabla}, \nonumber\\
\mathcal{L}_u \iota_v - \iota_v \mathcal{L}_u = \iota_{[u, v]_E} \qquad &\iff \qquad [\mathcal{L}_u, \iota_v]_{der} = \iota_{[u, v]_E}, \nonumber\\
\mathcal{L}_v^{\nabla} \iota_v - \iota_v \mathcal{L}_v^{\nabla} = 0 \qquad &\iff \qquad [\mathcal{L}_v^{\nabla}, \iota_v]_{der} = 0. \nonumber\\
%\mathcal{L}_v^{\nabla} d(\nabla) = d(\nabla) \mathcal{L}_v^{\nabla} \qquad &\iff \qquad [\mathcal{L}_v^{\nabla}, d(\nabla)]_{der} = 0, \nonumber\\
%\hat{\mathcal{L}}_v^{\nabla} \hat{d}(\nabla) = \hat{d}(\nabla) \hat{\mathcal{L}}_v^{\nabla} \qquad &\iff \qquad [\hat{\mathcal{L}}_v^{\nabla}, \hat{d}(\nabla)]_{der} = 0.
\label{ec67}
\end{align}
With these final observations, we believe that a comprehensive analogy between the usual metric-affine geometries and metric-connection geometries on anti-commutable pre-Leibniz algebroids for an admissible linear $E$-connection is established. We proved most of the usual propositions that hold in the manifold case, where even the proofs are completely parallel.

%%%%%%%%%%%%%%%%%%%%%%%%%%%%%%%%%%%%%%%%%%%%%%%%%%%%%%

\section{Concluding Remarks}

In this paper, we introduced and then investigated geometrical structures on anti-commutable pre-Leibniz algebroids. Many algebroids studied in the literature are special cases for these anti-commutable pre-Leibniz algebroids. Thus, the presented geometrical framework on them automatically applies to a variety of different cases. The most crucial point of this work is the definition of admissible linear $E$-connections according to which the bracket satisfies a property like anti-commutativity (\ref{ec8}). We claim that only such admissible linear $E$-connections should be considered while constructing a geometry on an algebroid. We prove that the admissibility condition is equivalent to a metric-compatibility condition in some generalized sense for many existing algebroids in the literature. This is the reason why Courant algebroid connections are necessarily $E$-metric-compatible in order to carry useful properties. In terms of the admissible linear $E$-connections, we offer a new point of view that the modification of $E$-torsion and $E$-curvature operators should be considered as a modification of the bracket instead. Hence one may use the beneficial features of almost- and pre-Lie algebroids, and prove many of the desirable properties and relations that hold in usual metric-affine geometries on a smooth manifold. These include the first and second Bianchi identities, the Ricci identity, Cartan structure equations, Cartan magic formula and the decomposition of linear $E$-connections in terms of its $E$-torsion and $E$-non-metricity. 

There are several important structures in the setting of manifolds that can be studied at the algebroid level. For instance, Weyl invariant theories \cite{28} on an anti-commutable pre-Leibniz algebroid seems a promising case. We actually dealt with conformal, projective and Weyl structures on pre-Leibniz algebroids, which led us to the current work of this paper because many crucial properties of these structures heavily depend on the anti-commutativity of the Lie bracket. Hence, working on an anti-commutable pre-Leibniz algebroid sustains a natural framework for these structures. We plan to investigate different compatibility conditions for these structures as in the work of Matveev and Scholz in the usual geometrical setting \cite{29}. A fruitful way to proceed might be to combine these ideas and Brans-Dicke theories \cite{30} since the dilaton field is an important ingredient of string theories so that it is naturally geometrized in an algebroid setting. In future, we wish to work comprehensively on this topic so that the dependence on the light-cone structure can be relaxed due to the existence of a dilaton. Moreover, as the $E$-curvature map is anti-symmetric, one can consider it as an $End(E)$-valued $E$-2-form. This $E$-2-form would make it possible to work on a pre-quantization scheme on algebroids by introducing a line bundle with an $E$-connection whose $E$-curvature 2-form is symplectic. Additionally, in the near future we also plan to work on the algebroid version of statistical structures and information geometry \cite{31}. More importantly, we plan to investigate the admissibility condition on different algebroids, including the $AV$-Courant algebroids \cite{32}, omni-Lie algebroids \cite{10}, $E$-Courant algebroids \cite{33} and $G$-algebroids \cite{34}. Especially, $E$-Courant algebroids look promising as they are constructed by using the jet and covariant differential operator bundles for some vector bundle, where the latter can provide a framework for generalizations of the metric-compatibility conditions which our admissibility condition fits naturally.  

\section{Acknowledgments}

The authors are thankful to Cem Yetişmişoğlu for long and fruitful discussions on many details of this work, especially on the proof of the proposition (\ref{pc5}).

\newpage
%\begin{doublespacing}

%\end{doublespacing}

\end{document}